\def\b1{\mbox{\boldmath $1$}} % 定义加粗的数字1
\newcommand{\Biggg}{\bBigg@{3.5}} % 定义一个新的大尺寸符号
\theoremstyle{plain} % 设置定理样式为plain
\newtheorem{theorem}{\bf Theorem}[section] % 定义新的定理环境，编号基于章节
\newtheorem{corollary}{\bf Corollary}[section] % 定义新的推论环境，编号与定理共享
\newtheorem{lemma}{\bf Lemma}[section] % 定义新的引理环境，编号与定理共享
\newtheorem{proposition}{\bf Proposition}[section] % 定义新的命题环境，编号与定理共享
\newtheorem{definition}{\bf Definition}[section] % 定义新的定义环境，编号与定理共享
\newtheorem{remark}{\bf Remark}[section]  % 定义新的备注环境，编号与定理共享
\begin{document}
\date{}

\title{Open-loop and closed-loop solvabilities for zero-sum stochastic linear quadratic differential games of Markovian regime switching system
\thanks{Fan Wu and Xin Zhang are supported by the National Natural Science Foundation of China (Grant Nos. 12171086, 12371472), Tianyuan Fund for Mathematics grant 12426652, and Jiangsu Provincial Scientific Research Center of Applied Mathematics grant BK20233002. Xun Li is supported by the Research Grants Council of Hong Kong under grants 15221621, 15226922 and 15225124, PolyU 4-ZZVB, 4-ZZP4, 1-ZVXA.}
}
\author{Fan Wu \thanks{School of Big Data and Statistics, Anhui University, Hefei 230601, China}
\qquad Xun Li \thanks{Department of Applied Mathematics, Hong Kong Polytechnic University, Hong Kong, China}
\qquad Xin Zhang \thanks{Corresponding Author: School of Mathematics, Southeast University, Nanjing 211189, China; E-mail: x.zhang.seu@gmail.com}
}
\maketitle
\noindent{\bf Abstract:}
This paper investigates zero-sum stochastic linear quadratic (SLQ) differential games with Markovian jumps. Open-loop and closed-loop solvabilities are studied by employing a new ``decomposition method", which decomposes the open-loop and closed-loop solvability problems of zero-sum SLQ differential games into two coupled SLQ control problems. Under the uniform convexity-concavity condition, we construct the open-loop saddle point along with its closed-loop representation based on the solution to a system of constrained coupled differential Riccati equations (CDREs), whose solvability is also established by employing the dimension extension technique and the continuation method. Finally, we provide a concrete example and present its closed-form saddle point based on the theoretical results obtained. \medskip

\noindent{\bf Keywords:} Open-loop and closed-loop solvabilities, Zero-sum SLQ Nash differential games, Coupled differential Riccati equations, Saddle point, Uniform convexity-concavity condition.

\section{Introduction}\label{section-1}
\subsection{Basic notation and problem formulation}
 Let $(\Omega,\mathcal{F},\mathbb{F},\mathbb{P} )$ be a complete filtered probability space on which a one-dimensional standard Brownian motion $W(\cdot)$ and a continuous time, irreducible Markov chain $\alpha(\cdot)$ are defined, with $\mathbb{F}=\{\mathcal{F}_{t}\}_{t\geq 0}$ being its natural filtration augmented by all $\mathbb{P}$-null sets in $\mathcal{F}$. Throughout this paper, we denote the state space of the Markov chain $\alpha(\cdot)$ as $\mathcal{S}:=\left\{1,2,..., L\right\}$, where $L$ is a finite natural number. Let $\mathbb{R}^{n \times m}$ be the Euclidean space of all $n \times m  $ matrices, and $\mathbb{R}^{n}$ be the $\mathbb{R}^{n \times 1}$ for simplicity. Additionally, the set of all $n\times n$ symmetric matrices is denoted by $\mathbb{S}^{n}$. Specifically, the sets of all $ n\times n$ semi-positive definite matrices and positive definite matrices are denoted by $\overline{\mathbb{S}_{+}^n}$ and $\mathbb{S}_{+}^n$, respectively. Let $\mathcal{P}$ be the $\mathbb{F}$ predictable $\sigma$-field on $[0,T]\times\Omega$, and we write $\varphi \in \mathcal{P}$ (respectively, $\varphi \in \mathbb{F}$) if the stochastic process $\varphi(\cdot)$ is $\mathcal{P}$-measurable (respectively, $\mathbb{F}$-progressively measurable). Then, for any Euclidean space $\mathbb{H}$ and $t \in [0,T]$, we introduce the following spaces:
 	$$\begin{array}{l}
		C(t, T; \mathbb{H}) =\big\{\varphi:[t, T] \rightarrow \mathbb{H} \text{ }\big|\text{ } \varphi(\cdot) \text { is continuous }\big\}, \\[0.3cm]
		L^{\infty}(t, T ; \mathbb{H}) =\big\{\varphi:[t, T] \rightarrow \mathbb{H} \text{ }\big|\text{ } \operatorname{esssup}_{s \in[t, T]}| \varphi(s)|<\infty\big\},\\[0.3cm]
		L_{\mathcal{F}_{T}}^{2}(\Omega ; \mathbb{H}) =\big\{\xi: \Omega \rightarrow \mathbb{H} \text{ }\big|\text{ } \xi \text { is } \mathcal{F}_{T} \text {-measurable, } \mathbb{E}|\xi|^{2}<\infty\big\},\\[0.3cm]
		\mathcal{S}_{\mathbb{F}}^{2}(t, T ; \mathbb{H}) =\big\{\varphi:[t, T] \times \Omega \rightarrow \mathbb{H} \text{ }\big|\text{ }\varphi(\cdot) \in \mathbb{F} \text {, } \mathbb{E}\big[\sup _{s \in[t, T]}|\varphi(s)|^{2}\big]<\infty\big\}, \\[0.3cm]
		L_{\mathbb{F}}^{2}(t, T ; \mathbb{H}) =\big\{\varphi:[t, T] \times \Omega \rightarrow \mathbb{H} \text{ }\big|\text{ } \varphi(\cdot) \in \mathbb{F}\text{, } \mathbb{E} \int_{t}^{T}|\varphi(s)|^{2} ds<\infty\big\}, \\[0.3cm]
		 L_{\mathcal{P}}^{2}\left(t, T ; \mathbb{H}\right)=\big\{\varphi:[t, T] \times \Omega \rightarrow \mathbb{H} \text{ }\big|\text{ } \varphi(\cdot) \in \mathcal{P}\text{, } \mathbb{E} \int_{t}^{T}|\varphi(s)|^{2} ds<\infty\big\} .
	\end{array}$$

 Consider the following  controlled linear stochastic differential equation (SDE, for short) with Markovian jumps over the finite time horizon $[0, T]$:
 \begin{equation}\label{state}
   \left\{
   \begin{aligned}
   dX(t)&=\left[A\left(t,\alpha(t)\right)X(t)+B_{1}\left(t,\alpha(t)\right)u_{1}(t)+B_{2}\left(t,\alpha(t)\right)u_{2}(t)+b(t)\right]dt\\
   &\quad+\left[C\left(t,\alpha(t)\right)X(t)+D_{1}\left(t,\alpha(t)\right)u_{1}(t)+D_{2}\left(t,\alpha(t)\right)u_{2}(t)+\sigma(t)\right]dW(t),\qquad t\in[0,T],\\
   X(0)&=x,\quad \alpha(0)=i.
   \end{aligned}
   \right.
 \end{equation}
 Here, $X(\cdot)$ is controlled by two players and is called the \emph{state process} with an initial value $x\in\mathbb{R}^{n}$. For $k=1,2$, $u_{k}(\cdot)$ is called the \emph{control process} of Player $k$, which belongs to the Hilbert space $\mathcal{U}_{k}\triangleq L_{\mathbb{F}}^{2}(0,T;\mathbb{R}^{m_{k}})$. Any element $u_{k}(\cdot)\in\mathcal{U}_{k}$ is called an \emph{admissible control} of Player $k$.

 In this paper, we suppose that the coefficients of the state equation \eqref{state} satisfy the following assumption:\\
 \textbf{(A1)} For $i\in\mathcal{S}$ and $k=1,2$,
 $$A(\cdot,i), \text{ }C(\cdot,i)\in L^{\infty}(0,T;\mathbb{R}^{n\times n}),\quad B_{k}(\cdot,i),\text{ }D_{k}(\cdot,i)\in L^{\infty}( 0,T; \mathbb{R}^{n\times m_{k}}),\quad b(\cdot),\text{ }\sigma(\cdot)\in L_{\mathbb{F}}^{2}(0,T;\mathbb{R}^{n}).$$
 Obviously, the state equation \eqref{state} admits a unique solution $X(\cdot;x,i,u_{1},u_{2})\in\mathcal{S}_{\mathbb{F}}^{2}(0,T;\mathbb{R}^{n})$ for any initial value $(x,i)\in\mathbb{R}^{n}\times\mathcal{S}$ and the control pair $(u_{1}(\cdot),u_{2}(\cdot))\in\mathcal{U}_{1}\times\mathcal{U}_{2}$.

To measure the performance of $u_{1}(\cdot)$ and $u_{2}(\cdot)$, we introduce the following criterion functional:
\begin{equation}\label{performance-functional}
\begin{aligned}
   J(x,i;u_{1},u_{2})
    \triangleq &\mathbb{E}\left\{\displaystyle\int_{0}^{T}\left[
    \left<
    \left(
    \begin{matrix}
    Q(t,\alpha(t)) & S_{1}(t,\alpha(t))^{\top} & S_{2}(t,\alpha(t))^{\top} \\
    S_{1}(t,\alpha(t)) & R_{11}(t,\alpha(t)) & R_{12}(t,\alpha(t)) \\
    S_{2}(t,\alpha(t)) & R_{21}(t,\alpha(t)) & R_{22}(t,\alpha(t))
    \end{matrix}
    \right)
    \left(
    \begin{matrix}
    X(t) \\
    u_{1}(t) \\
    u_{2}(t)
    \end{matrix}
   \right),
    \left(
    \begin{matrix}
    X(t) \\
    u_{1}(t) \\
    u_{2}(t)
    \end{matrix}
    \right)
    \right>\right.\right.\\
    &\left.\left.+2\left<
    \left(
    \begin{matrix}
    q(t) \\
    \rho_{1}(t) \\
    \rho_{2}(t)
    \end{matrix}
   \right),
    \left(
    \begin{matrix}
    X(t) \\
    u_{1}(t) \\
    u_{2}(t)
    \end{matrix}
    \right)
   \right>
   \right]dt
   +\big<G(\alpha(T))X(T)+2g, X(T)\big>\right\}.
  \end{aligned}
\end{equation}
The above functional \eqref{performance-functional} can be regarded as the loss of Player $1$ and the gain of Player $2$. Therefore, Player 1 aims to minimize \eqref{performance-functional} by selecting a control $u_{1}$, while Player 2 aims to maximize \eqref{performance-functional} by selecting a control $u_{2}$. To ensure that the performance functional is well-defined, we assume that the weighting coefficients in \eqref{performance-functional} satisfy the following assumption:\\
\textbf{(A2)} For any fixed $i\in\mathcal{S}$ and $k=1,2$,
\begin{align*}
&Q(\cdot,i)\in L^{\infty}(0,T;\mathbb{S}^{n}),\quad S_{k}(\cdot,i)\in L^{\infty}(0,T;\mathbb{R}^{m_{k}\times n }),\quad R_{kk}(\cdot,i)\in L^{\infty}( 0,T; \mathbb{S}^{m_{k}}),\\
&R_{12}(\cdot,i)=R_{21}(\cdot,i)^{\top}\in L^{\infty}( 0,T; \mathbb{R}^{m_{1}\times m_{2}}),\quad G(i)\in \mathbb{S}^{n},\\
&q(\cdot)\in L_{\mathbb{F}}^{2}(0,T;\mathbb{R}^{n}),\quad
\rho_{k}(\cdot)\in L_{\mathbb{F}}^{2}(0,T;\mathbb{R}^{m_{k}}),\quad
 g\in L_{\mathcal{F}_{T}}^{2}(\Omega,\mathbb{R}^{n})
\end{align*}
The above-described problem is a two-person zero-sum SLQ differential game of the Markovian regime switching system, which can be summarized as follows:

\noindent\textbf{Problem (M-ZLQ):} For any $(x,i)\in \mathbb{R}^{n}\times \mathcal{S}$, find $(u_{1}^{*},u_{2}^{*})\in\mathcal{U}_{1}\times \mathcal{U}_{2}$ such that
\begin{equation}\label{ZLQ-value-function}
\sup_{u_{2}\in \mathcal{U}_{2}}J\left(x,i;u_{1}^{*},u_{2}\right)=J\left(x,i;u_{1}^{*},u_{2}^{*}\right)=\inf_{u_{1}\in \mathcal{U}_{1}}J\left(x,i;u_{1},u_{2}^{*}\right).\\
\end{equation}

If  $b=\sigma=q=0$, $\rho_{1}=0$, $\rho_{2}=0$, $g=0$, the corresponding state process, performance functional, and problem are denoted by  $X^{0}(\cdot;x,i,u_{1},u_{2})$, $J^{0}(x,i;u_{1},u_{2})$, Problem (M-ZLQ)$^0$,  respectively.

If we formally set $m=m_{1}+m_{2}$, $\mathcal{U}=\mathcal{U}_{1}\times \mathcal{U}_{2}=L_{\mathbb{F}}^{2}(0,T;\mathbb{R}^{m})$ and denote (for $i\in\mathcal{S}$)
\begin{equation}\label{natation-1}
\begin{aligned}
 &B(\cdot,i)\triangleq\left(B_{1}(\cdot,i),B_{2}(\cdot,i)\right),\quad  D(i)\triangleq\left(D_{1}(\cdot,i),D_{2}(\cdot,i)\right),\quad
 S(\cdot,i)^{\top}\triangleq\left( S_{1}(\cdot,i)^{\top}, S_{2}(\cdot,i)^{\top}\right),\\
 &R(\cdot,i)\triangleq\left(\begin{matrix} R_{1}(\cdot,i)\\R_{2}(\cdot,i)\end{matrix}\right)
  \triangleq\left(\begin{matrix} R_{11}(\cdot,i) & R_{12}(\cdot,i)\\R_{21}(\cdot,i)& R_{22}(\cdot,i)\end{matrix}\right)\quad
  \rho(\cdot)\triangleq\left(\begin{matrix} \rho_{1}(\cdot)\\\rho_{2}(\cdot)\end{matrix}\right)\quad
  u\triangleq\left(\begin{matrix} u_{1}(\cdot)\\u_{2}(\cdot)\end{matrix}\right),
\end{aligned}
\end{equation}
then corresponding state equation \eqref{state} and performance functional \eqref{performance-functional} can be rewritten as
\begin{equation}\label{state-notation}
   \left\{
   \begin{aligned}
   dX(t)&=\left[A\left(t,\alpha(t)\right)X(t)+B\left(t,\alpha(t)\right)u(t)+b(t)\right]dt\\
   &\quad+\left[C\left(t,\alpha(t)\right)X(t)+D\left(t,\alpha(t)\right)u(t)+\sigma(t)\right]dW(t),\qquad t\in[0,T],\\
   X(0)&=x,\quad \alpha(0)=i,
   \end{aligned}
   \right.
 \end{equation}
 and
 \begin{equation}\label{performance-functional-notation}
\begin{aligned}
   J(x,i;u)
    \triangleq &\mathbb{E}\left\{\int_{0}^{T}\left[
    \left<
    \left(
    \begin{matrix}
    Q(t,\alpha(t)) & S(t,\alpha(t))^{\top}  \\
    S(t,\alpha(t)) & R(t,\alpha(t))
    \end{matrix}
    \right)
    \left(
    \begin{matrix}
    X(t) \\
    u(t)
    \end{matrix}
   \right),
    \left(
    \begin{matrix}
    X(t) \\
    u(t)
    \end{matrix}
    \right)
    \right>\right.\right.\\
    &\left.\left.+2\left<
    \left(
    \begin{matrix}
    q(t) \\
    \rho(t)
    \end{matrix}
   \right),
    \left(
    \begin{matrix}
    X(t) \\
    u(t)
    \end{matrix}
    \right)
   \right>
   \right]dt
   +\big<G(\alpha(T))X(T)+2g, X(T)\big>\right\}.
  \end{aligned}
\end{equation}
If we further set $m_{1}=m$ and $m_{2}=0$, then the Problem (M-ZLQ) becomes an SLQ optimal control problem, which can be summarized as follows.

\noindent\textbf{Problem (M-SLQ):} For any given $(x,i)\in \mathbb{R}^{n}\times \mathcal{S}$, find a $u^{*}\in\mathcal{U}$ such that
\begin{equation}\label{SLQ-value-function}
J\left(x,i;u^{*}\right)=\inf_{u\in \mathcal{U}}J\left(x,i;u\right)=V(x,i).
\end{equation}
The function $V(\cdot,\cdot)$ is called the value function of the Problem (M-SLQ). If  $b=\sigma=q=0$, $\rho=0$, $g=0$, then the corresponding state process, performance functional, value function, and problem are denoted by  $X^{0}(\cdot;x,i,u)$, $J^{0}(x,i;u)$, $V^{0}(\cdot,\cdot)$, Problem (M-SLQ)$^0$,  respectively.

We note that the zero-sum SLQ differential game without Markovian jumps, i.e., $\mathcal{S}=\{1\}$, has been studied from various perspectives in \citet{mou2006two, Sun.J.R.2014_NILQ, yu2015optimal, Sun2021}. It is well known that the stochastic model involving regime-switching jumps has significant practical implications in fields such as engineering, financial management, and economics (see, for example, \cite{Zhang-Siu-Meng-2010,zhang-Elliott-Siu-Guo-2011,zhang_stochastic_2012,
zhang_general_2018,sun_risk-sensitive_2018}).  \citet{moon2019sufficient} investigated the Problem (M-ZLQ)$^{0}$ with $R_{12}(\cdot,\alpha(\cdot))=R_{21}(\cdot,\alpha(\cdot))^{\top}=0$ in 2019. However, he only considered the feedback saddle point for the Problem (M-ZLQ)$^{0}$ and did not analyze its open-loop solvability, closed-loop solvability, and the relationship between both. Additionally, the solvability of the corresponding CDREs in \cite{moon2019sufficient} is obtained only under the following condition:
\begin{equation}\label{Moon-condition}
  Q(\cdot,i), \text{ }G(i)\geq 0,\quad D_{k}(\cdot,i)=S_{k}(\cdot,i)=0,\quad R_{11}(\cdot,i)>0, \quad R_{22}(\cdot,i)=-\gamma^{2} I, \quad i\in\mathcal{S},\quad k=1,2,
\end{equation}
for a sufficiently large value of $\gamma>0$.
However, to our knowledge, the open-loop and closed-loop solvabilities for the Problem (M-ZLQ) have not yet been studied.  \citet{Zhang.X.2021_ILQM} examined the open-loop solvability and closed-loop solvability of Problem (M-SLQ). They established the equivalence between the open-loop (respectively, closed-loop) solvability and the existence of an adapted solution to the corresponding forward-backward stochastic differential equations with constraints (respectively, the existence of a regular solution to Riccati equations). However, whether those results can be generalized to Problem (M-ZLQ) remains unknown.

In this paper, we study the Problem (M-ZLQ) within a more general framework that allows $R_{12}(\cdot,\alpha(\cdot))=R_{21}(\cdot,\alpha(\cdot))^{\top}\neq 0$ and the presence of the inhomogeneous term. We discuss open-loop and closed-loop solvability by employing a new “decomposition method." Furthermore, we will examine the solvability of associated CDREs under the uniform convexity-concavity condition:\\
\textbf{Condition (UCC).} There exists a constant $\lambda>0$ such that
\begin{equation}\label{uniformly-convex-concave-condition}
  \left\{
  \begin{array}{lll}
  J^{0}(0,i;u_{1},0)\geq \lambda \mathbb{E}\int_{0}^{T}\big|u_{1}(t)\big|^{2}dt,&\forall u_{1}(\cdot)\in\mathcal{U}_{1},&\forall i\in\mathcal{S},\\[2mm]
  J^{0}(0,i;0,u_{2})\leq -\lambda \mathbb{E}\int_{0}^{T}\big|u_{2}(t)\big|^{2}dt,&\forall u_{2}(\cdot)\in\mathcal{U}_{2},&\forall i\in\mathcal{S}.
  \end{array}
  \right.
\end{equation}
Clearly, the condition \eqref{uniformly-convex-concave-condition} is much weaker than \eqref{Moon-condition} introduced in \cite{moon2019sufficient}.
As we can see in Theorem \ref{thm-ZLQ-open-loop}, the following weaker version of condition \eqref{uniformly-convex-concave-condition} is a necessary condition for the existence of open-loop saddle points for Problem (M-ZLQ): \\
\textbf{Condition (CC).} The performance functional \eqref{performance-functional} such that
\begin{equation}\label{convex-concave-condition}
  \left\{
  \begin{array}{lll}
  J^{0}(0,i;u_{1},0)\geq 0,&\forall u_{1}(\cdot)\in\mathcal{U}_{1},&\forall i\in\mathcal{S},\\[2mm]
  J^{0}(0,i;0,u_{2})\leq 0,&\forall u_{2}(\cdot)\in\mathcal{U}_{2},&\forall i\in\mathcal{S}.
  \end{array}
  \right.
\end{equation}
We point out that the condition (UCC) is only slightly stronger than the necessary condition (CC) in the sense that the condition (UCC) will degenerate into the condition (CC) by letting $\lambda\downarrow 0$.

\subsection{Brief history and contributions of this paper}
Differential games involving multi-person decision-making within a dynamic system are widely applied across various fields, including engineering, economics, biology, and reinsurance. The study of the deterministic linear quadratic differential game (DLQ-DG, for short) can be traced back to the pioneering work of \citet{ho1965differential}, which inspired further research in this area. \citet{schmitendorf1970existence} investigated the open-loop and closed-loop solvabilities for DLQ-DG problems and demonstrated that the existence of a closed-loop saddle point does not necessarily imply the existence of an open-loop saddle point. For recent developments, one can refer to \citet{bernhard1979linear,delfour2007linear,delfour2009linear,zhang2005some,bacsar2008h}, and the references cited therein. The study of two-person zero-sum stochastic differential games was first undertaken by \citet{fleming1989existence}, who proved that the upper and lower value functions satisfy the dynamic programming principle and are the unique viscosity solutions to the associated Hamilton–Jacobi–Bellman–Isaacs partial differential equations. Later, \citet{hamadene1995zero} discussed stochastic differential games based on the backward stochastic differential equation (BSDE) approach. Following this, several subsequent works emerged. For example, see \citet{buckdahn2008stochastic,wang2010pontryagin,wang2012partial,bayraktar2013weak,lv2020two} and others.

Many authors have regarded zero-sum SLQ differential games as a specific type of stochastic differential game. \citet{mou2006two} studied the open-loop solvability of the zero-sum SLQ differential games problem (denoted as Problem (ZLQ)) using the Hilbert space method. Consequently, \citet{Sun.J.R.2014_NILQ} successfully generalized the results in \cite{mou2006two} by further considering the closed-loop solvability of Problem (ZLQ). However, neither \citet{mou2006two} nor \citet{Sun.J.R.2014_NILQ} derived the solvability of the corresponding differential Riccati equation (DRE). They provided the equivalent characterization for the open-loop saddle points and the closed-loop equilibrium strategy. \citet{yu2015optimal} investigated the optimal control-strategy pair in a feedback form for Problem (ZLQ) based on the Riccati equation approach and studied the solvability of DRE under a special case (see the assumptions (H1.1)-(H1.4) and (H2.1)-(H2.2)).  Recently, \citet{Sun2021} revisited Problem (ZLQ) and provided the solvability result for DRE under the uniform convexity-concavity condition, which is more general than the assumptions (H1.1)-(H1.4) and (H2.1)-(H2.2) imposed in \citet{yu2015optimal}. There are many other works on SLQ differential games, among which we would like to mention the works \cite{Hamadene-1998-backward, Hamadene-1999-nonzero, wu2005forward, Yu-2012-LQG, Li-2015-Recursive, sun2019linear} on non-zero-sum SLQ differential games and the works \cite{bensoussan2016linear,tian2020closed} on mean-field SLQ differential games.

The main contributions of this paper can be summarized as follows.

(i) %We adopt a new “decomposition method" to obtain the equivalent characterizations of open-loop saddle points and closed-loop equilibrium strategy.
Both \citet{mou2006two} and \citet{Sun.J.R.2014_NILQ} regarded the SLQ control problem as a special case of Problem (ZLQ) and derived the open-loop and closed-loop solvabilities for the SLQ control problem based on the results in Problem (ZLQ). In this paper, we adopt a different approach to study the equivalent characterizations of open-loop saddle points and closed-loop equilibrium strategies for Problem (M-ZLQ). We first decompose the open-loop and closed-loop solvability problems of Problem (M-ZLQ) into two coupled SLQ control problems to solve. Then, based on the findings in \citet{Zhang.X.2021_ILQM}, we obtain the open-loop and closed-loop solvabilities for Problem (M-ZLQ). We note that this method is also applicable to non-zero-sum SLQ differential game problems, and the obtained results in this paper can be degenerated to the case without regime-switching jumps by setting $\mathcal{S}=\{1\}$.

(ii) The solvability of the associated CDREs \eqref{ZLQ-CCDREs} is derived under the uniform convexity-concavity condition \eqref{uniformly-convex-concave-condition}. Based on this result, we successfully constructed the open-loop saddle point and its closed-loop representation. It is worth mentioning that, unlike the single DRE studied in \citet{Sun2021}, the CDREs involved in this paper comprise a system of fully coupled nonlinear ordinary differential equations; thus, we cannot directly obtain their solvability using the technique provided in \citet{Sun2021}. With the aid of the dimension extension technique, we first employ permutation matrices to eliminate the coupling terms in the CDREs \eqref{ZLQ-CCDREs} (see \eqref{ZLQ-CCDREs-3}) and then establish the solvability of the CDREs  \eqref{ZLQ-CCDREs} based on the contraction mapping theorem and the continuation method.

(iii) At the end of this paper, we provide a concrete example for Problem (M-ZLQ) and rigorously verify that the provided example meets condition \eqref{uniformly-convex-concave-condition} by applying Gronwall’s inequality and exchanging the order of integration technique. Based on the derived results, we then obtain the closed-loop representation for open-loop saddle points at any initial value. Here, we assert that the method for verifying the condition \eqref{uniformly-convex-concave-condition} in this example is representative and can be similarly applied to other application scenarios.

The rest of the paper is organized as follows. Section \ref{section-2} presents some preliminary results that will be referenced frequently throughout this paper. Section \ref{section-3} aims to investigate the open-loop solvability of the Problem (M-ZLQ) and discuss the closed-loop representation for open-loop saddle points. Section \ref{section-4} studies closed-loop solvability and illustrates the relationship between open-loop solvability and closed-loop solvability.  Section  \ref{section-5} further explores the solvability of CDREs under the uniform convexity-concavity condition. Finally, Section  \ref{section-6} concludes the paper by providing concrete examples to illustrate the results developed in the previous sections.

\section{Preliminaries}\label{section-2}
Throughout this paper, we suppose that the Hilbert space $\mathbb{R}^{n\times m}$ is equipped with the Frobenius inner product $<M,N>\triangleq tr(M^{\top}N)$ for any $M,\, N\in\mathbb{R}^{n \times m}$, where $M^{\top}$ is the transpose of $M$ and $tr(M^{\top}N)$ is the trace of $M^{\top}N$. The norm introduced by the Frobenius inner product is denoted by $|\cdot|$. The range of matrix $M$  is denoted by $\mathcal{R}(M)$, and the identity matrix of size $n$ is denoted by $I_{n}$, which is often written as $I$ when no confusion occurs.
For any $M, N \in \mathbb{S}^n$, we write $M \geqslant N$ (respectively, $M>N$) if $M-N$ is semi-positive definite (respectively, positive definite). Specially, for an $\mathbb{S}^n$-valued measurable function $F(\cdot)$ on $[0, T]$, we write
$$\left\{
\begin{array}{lll}
F(\cdot) \geqslant 0 & \text { if } \quad F(s) \geqslant 0, & \text { a.e. } s \in[0, T], \\
F(\cdot)>0 & \text { if } \quad F(s)>0, & \text { a.e. } s \in[0, T], \\
F(\cdot) \gg 0 & \text { if } \quad F(s) \geqslant \delta I_n, & \text { a.e. } s \in[0, T], \quad \text { for some } \delta>0 .
\end{array}
\right.$$
Moreover, we denote $F(\cdot)\leq 0$, $F(\cdot)<0$ and $F(\cdot)\ll 0$ if $-F(\cdot)\geq 0$, $-F(\cdot)>0$ and $-F(\cdot)\gg 0$, respectively.

The generator of the Markov chain $\alpha(\cdot)$  under $\mathbb{P}$ is defined as $\pi(t):=\left[\pi_{ij}(t)\right]_{i,j=1,2,...,L}$. Here,  for $i\neq j$,  $\pi_{ij}(t)\geq 0$ is the bounded determinate transition intensity of the chain from state $i$ to state $j$ at time $t$ and $\sum_{j=1}^{L}\pi_{ij}(t)=0$ for any fixed $i$. In the following, let $N_{j}(t)$ be the number of jumps  into state $j$ up to time $t$ and set
$$\widetilde{N}_{j}(t)=N_{j}(t)-\int_{0}^{t}\sum_{i=1,i\neq j}^{L}\pi_{ij}(s)\mathbb{I}_{\{\alpha(s-)=i\}}ds,$$
where $\mathbb{I}_{A}$ is the indicator function.
Then for each $j\in\mathcal{S}$, the term $\widetilde{N}_{j}(t)$ is an $\left(\mathbb{F},\mathbb{P}\right)$-martingale (see, \citet{zhang_stochastic_2012}). For any given $L$-dimensional vector process $\mathbf{\Gamma}(\cdot)=\left[\Gamma_1(\cdot),\Gamma_2(\cdot),\cdots,\Gamma_L(\cdot)\right]$, we define
  $$\mathbf{\Gamma}(t)\cdot d\mathbf{\widetilde{N}}(t)\triangleq\sum_{j=1}^{L}\Gamma_{j}(s)d\widetilde{N}_{j}(s).$$
In addition, for any given Banach space $\mathbb{B}$, we denote
$$\mathcal{D}\left(\mathbb{B}\right)=\left\{\mathbf{\Lambda}(\cdot)=\left(\Lambda(\cdot,1),\cdots,\Lambda(\cdot,L)\right) \mid \Lambda(\cdot,i) \in \mathbb{B}\text{, } \forall i\in \mathcal{S}\right\}.$$
%Specifically, an element $\mathbf{\Theta}(\cdot)=\big(\Theta(\cdot,1),\Theta(\cdot,2),\cdots \Theta(\cdot,L)\big)\in \mathcal{D}\left(L^{2}(0,T;\mathbb{R}^{m\times n})\right)$ if and only if $\Theta(\cdot,i)\in L^{2}(0,T;\mathbb{R}^{m\times n})$ for any $i\in\mathcal{S}$.

Now, we recall some results about the pseudo-inverse introduced by \citet{Penrose.1955}.
\begin{lemma}\label{lem-Pseudoinverse}
\begin{description}
    \item[(i)] For any $M\in\mathbb{R}^{m\times n}$, there exists a unique matrix $M^{\dag
}\in\mathbb{R}^{n\times m}$ such that
$$MM^{\dag}M=M,\qquad M^{\dag}MM^{\dag}=M^{\dag},\qquad (MM^{\dag})^{\top}=MM^{\dag},\qquad
(M^{\dag}M)^{\top}=M^{\dag}M.$$
In addition, if $M\in\mathbb{S}^{n}$, then $M^{\dag}\in\mathbb{S}^{n}$, and
$$MM^{\dag}=M^{\dag}M,\qquad M\geq 0\Leftrightarrow M^{\dag}\geq 0.$$
In the above, $M^{\dag}$ is called the pseudo-inverse of $M$.
\item[(ii)] Let $L\in\mathbb{R}^{n\times k}$ and $N\in\mathbb{R}^{n\times m}$. The matrix equation $NX=L$ admits a solution if and only if $NN^{\dag}L=L$, in which case the general solution is given by $$X=N^{\dag}L+(I-N^{\dag}N)Y,$$
where $Y\in\mathbb{R}^{m\times k}$ is arbitrary.
  \end{description}
\end{lemma}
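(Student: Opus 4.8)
The plan is to treat part (i) via the singular value decomposition (SVD) together with a purely algebraic uniqueness argument, and to derive part (ii) by direct substitution using the first Penrose relation $NN^{\dag}N=N$. I expect no deep obstacle here, since this is the classical result of \citet{Penrose.1955}; the only delicate point is the bookkeeping in the uniqueness chain.

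For existence in (i), I would start from the SVD $M=U\Sigma V^{\top}$, where $U\in\mathbb{R}^{m\times m}$ and $V\in\mathbb{R}^{n\times n}$ are orthogonal and $\Sigma\in\mathbb{R}^{m\times n}$ carries the singular values $\sigma_{1},\dots,\sigma_{r}>0$ (with $r=\operatorname{rank}M$) on its diagonal. Defining $\Sigma^{\dag}\in\mathbb{R}^{n\times m}$ as the transpose of $\Sigma$ with each nonzero $\sigma_{i}$ replaced by $\sigma_{i}^{-1}$, I would set $M^{\dag}:=V\Sigma^{\dag}U^{\top}$. Because $\Sigma\Sigma^{\dag}\Sigma=\Sigma$, $\Sigma^{\dag}\Sigma\Sigma^{\dag}=\Sigma^{\dag}$, and both $\Sigma\Sigma^{\dag}$ and $\Sigma^{\dag}\Sigma$ are diagonal (hence symmetric), the four defining identities follow at once from the orthogonality of $U,V$. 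For uniqueness, I would assume that both $X$ and $Y$ satisfy the four conditions and run the standard substitution chain
\begin{equation*}
X=XMX=X(MX)^{\top}=XX^{\top}M^{\top}=XX^{\top}M^{\top}MY=XMY,
\end{equation*}
using $M^{\top}=(MYM)^{\top}=M^{\top}MY$ (valid since $MY$ is symmetric and $M=MYM$) together with $XX^{\top}M^{\top}=X$; the symmetric computation yields $Y=XMY$, whence $X=Y$.

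For the symmetric refinements, I would observe that transposing the four identities shows $(M^{\dag})^{\top}$ satisfies the Penrose axioms for $M^{\top}=M$, so uniqueness forces $(M^{\dag})^{\top}=M^{\dag}$; the commutation $MM^{\dag}=M^{\dag}M$ then follows by transposing the two symmetry conditions and using that $M$ and $M^{\dag}$ are symmetric. For the equivalence $M\geq 0\Leftrightarrow M^{\dag}\geq 0$, I would use the spectral decomposition $M=P\Lambda P^{\top}$ with $P$ orthogonal and $\Lambda=\operatorname{diag}(\lambda_{1},\dots,\lambda_{n})$, so that $M^{\dag}=P\Lambda^{\dag}P^{\top}$ shares the eigenvectors of $M$ and carries eigenvalues $\lambda_{i}^{-1}$ on the nonzero modes; since reciprocation preserves sign, the two matrices are simultaneously semi-positive definite.

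Finally, part (ii) follows directly from $NN^{\dag}N=N$. For necessity, if $NX=L$ then $NN^{\dag}L=NN^{\dag}NX=NX=L$; for sufficiency, if $NN^{\dag}L=L$ then $X=N^{\dag}L$ solves $NX=L$. For the general solution, the identity $N(I-N^{\dag}N)=N-N=0$ shows that every $X=N^{\dag}L+(I-N^{\dag}N)Y$ is a solution, while for an arbitrary solution $X$ the choice $Y=X$ reproduces it through $N^{\dag}L+(I-N^{\dag}N)X=N^{\dag}NX+X-N^{\dag}NX=X$. The main (mild) obstacle throughout is simply keeping the transpose manipulations straight; everything else reduces to the four Penrose relations and the two standard matrix factorizations.
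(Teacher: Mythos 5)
Your proof is correct. There is, however, nothing in the paper to compare it against: the paper states this lemma as a recalled classical fact, citing \citet{Penrose.1955}, and gives no proof whatsoever. Your argument is the standard one and is complete: existence via the SVD $M=U\Sigma V^{\top}$ with $M^{\dag}=V\Sigma^{\dag}U^{\top}$; uniqueness via the substitution chain $X=XMX=XX^{\top}M^{\top}=XX^{\top}M^{\top}MY=XMY$ and its mirror image (your use of $M^{\top}=M^{\top}MY$, which follows from $M=MYM$ and the symmetry of $MY$, is exactly right); the symmetric refinements by observing that $(M^{\dag})^{\top}$ satisfies the four axioms for $M^{\top}=M$ and invoking uniqueness, with the sign equivalence read off from the spectral decomposition $M^{\dag}=P\Lambda^{\dag}P^{\top}$; and part (ii) by direct substitution from $NN^{\dag}N=N$, including the observation that an arbitrary solution $X$ is recovered by taking $Y=X$ in the parametrization. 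The only step stated tersely is the verification that $(M^{\dag})^{\top}$ really satisfies all four Penrose axioms when $M$ is symmetric (e.g.\ $M(M^{\dag})^{\top}=(M^{\dag}M)^{\top}=M^{\dag}M$ is symmetric), but this is routine and the claim is true, so the gap is cosmetic rather than substantive.
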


\begin{remark}\label{rmk-Pseudoinverse}
\begin{description}
\item[(i)] It follows from $MM^{\dag}M=M$ that
$$MM^{\dag}(I-MM^{\dag})=0,\quad M^{\dag}M(I-M^{\dag}M)=0,$$
which implies that both $MM^{\dag}$ and $M^{\dag}M$ are  orthogonal projection matrix.
    \item[(ii)] Obviously, the condition $NN^{\dag}L=L$ is equivalent to $\mathcal{R}(L)\subset \mathcal{R}(N)$.
    % Obviously, the condition $NN^{\dag}L=L$ implies that $\mathcal{R}(L)\subset \mathcal{R}(N)$. Conversely, supposing the condition $\mathcal{R}(L)\subset \mathcal{R}(N)$ holds, then we must have $L=NX$ for some matrix $X$. It follows from item (ii) of the above Lemma that $X=N^{\dag}L+[I-N^{\dag}N]Y$, which further implies that $L=NN^{\dag}L$.
\item[(iii)] In the above lemma, if $N\in\mathbb{S}^n$ and $NX=L$ admits a solution $X$, then
\begin{align*}
    X^{\top}NX&=\big[N^{\dag}L+(I-N^{\dag}N)Y\big]^{\top}N\big[N^{\dag}L+(I-N^{\dag}N)Y\big]=L^{\top}N^{\dag}L.
\end{align*}
%$X^{\top}NX=L^{\top}N^{\dag}L$.
  \end{description}
\end{remark}

The following Lemma can be found in \citet{Sun2021}, whose proof is straightforward.
\begin{lemma}\label{lem-matrix}
For $\mathbb{M}\in\mathbb{S}^{m}$, $\mathbb{L}\in\mathbb{R}^{m\times n}$, $\mathbb{N}\in\mathbb{S}^{n}$, if $\mathbb{M}$ and $\Phi\triangleq \mathbb{N}-\mathbb{L}^{\top}\mathbb{M}^{-1}\mathbb{L}$ are invertible, then $\left(\begin{matrix}
\mathbb{M} & \mathbb{L}\\ \mathbb{L}^{\top} & \mathbb{N}\end{matrix}\right)$ is also invertible and
$$
\left(\begin{matrix}
\mathbb{M} & \mathbb{L}\\ \mathbb{L}^{\top} & \mathbb{N}\end{matrix}\right)^{-1}
=\left(\begin{matrix}
\mathbb{M}^{-1}+(\mathbb{M}^{-1}\mathbb{L})\Phi^{-1}(\mathbb{M}^{-1}\mathbb{L})^{\top} & -(\mathbb{M}^{-1}\mathbb{L})\Phi^{-1}\\
-\Phi^{-1}(\mathbb{M}^{-1}\mathbb{L})^{\top} & \Phi^{-1}
\end{matrix}\right).
$$
Moreover, for every $\theta\in\mathbb{R}^{m\times k}$ and $\xi\in\mathbb{R}^{n\times k}$,
$$
\left(\theta^{\top},\xi^{\top}\right)\left(\begin{matrix}
\mathbb{M} & \mathbb{L}\\ \mathbb{L}^{\top} & \mathbb{N}\end{matrix}\right)^{-1}\left(\begin{matrix}\theta\\ \xi\end{matrix}\right)
=\theta^{\top}\mathbb{M}\theta+(\mathbb{L}^{\top}\mathbb{M}^{-1}\theta-\xi)^{\top}\Phi^{-1}
(\mathbb{L}^{\top}\mathbb{M}^{-1}\theta-\xi).
$$
In particular, if $\mathbb{M}$ is positive definite and $\mathbb{N}$ is negative definite, then
$$
\left(\theta^{\top},\xi^{\top}\right)\left(\begin{matrix}
\mathbb{M} & \mathbb{L}\\ \mathbb{L}^{\top} & \mathbb{N}\end{matrix}\right)^{-1}\left(\begin{matrix}\theta\\ \xi\end{matrix}\right)
\leq \theta^{\top}\mathbb{M}\theta,\quad \forall \theta\in\mathbb{R}^{m\times k},\text{ }\xi\in\mathbb{R}^{n\times k}.
$$
\end{lemma}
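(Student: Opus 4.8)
The statement is a block symmetric-matrix identity, so the plan is to diagonalize the block form by a congruence (a block $LDL^{\top}$ factorization), which simultaneously yields the invertibility, the explicit inverse, and the quadratic-form identity. Writing $M\triangleq\left(\begin{smallmatrix}\mathbb{M}&\mathbb{L}\\ \mathbb{L}^{\top}&\mathbb{N}\end{smallmatrix}\right)$ for the block matrix, I would introduce
$$E\triangleq\left(\begin{matrix} I & \mathbb{M}^{-1}\mathbb{L}\\ 0 & I\end{matrix}\right),\qquad D\triangleq\left(\begin{matrix}\mathbb{M} & 0\\ 0 & \Phi\end{matrix}\right).$$
Using that $\mathbb{M}\in\mathbb{S}^{m}$ implies $(\mathbb{M}^{-1}\mathbb{L})^{\top}=\mathbb{L}^{\top}\mathbb{M}^{-1}$, a one-line multiplication gives $E^{\top}DE=M$, where the $(2,2)$ block reassembles as $\mathbb{L}^{\top}\mathbb{M}^{-1}\mathbb{L}+\Phi=\mathbb{N}$ by the very definition of $\Phi$. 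Since $E$ is unit upper-triangular it is invertible with $E^{-1}=\left(\begin{smallmatrix} I & -\mathbb{M}^{-1}\mathbb{L}\\ 0 & I\end{smallmatrix}\right)$, and $D$ is invertible precisely because $\mathbb{M}$ and $\Phi$ are; hence $M$, being a product of invertible factors, is invertible.

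For the inverse formula I would simply invert the factorization, $M^{-1}=E^{-1}D^{-1}E^{-\top}$ with $D^{-1}=\mathrm{diag}(\mathbb{M}^{-1},\Phi^{-1})$, and multiply the three explicit triangular/diagonal factors. The $(1,1)$ block produces $\mathbb{M}^{-1}+(\mathbb{M}^{-1}\mathbb{L})\Phi^{-1}(\mathbb{M}^{-1}\mathbb{L})^{\top}$, the off-diagonal blocks produce $-(\mathbb{M}^{-1}\mathbb{L})\Phi^{-1}$ and its transpose $-\Phi^{-1}(\mathbb{M}^{-1}\mathbb{L})^{\top}$, and the $(2,2)$ block is $\Phi^{-1}$, matching the claimed expression. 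As an independent check one can multiply the proposed inverse by $M$ and verify that the four blocks collapse to $\left(\begin{smallmatrix} I & 0\\ 0 & I\end{smallmatrix}\right)$ using $\Phi+\mathbb{L}^{\top}\mathbb{M}^{-1}\mathbb{L}=\mathbb{N}$.

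The quadratic-form identity is then immediate from the same factorization: for $x=(\theta;\xi)$ one has $x^{\top}M^{-1}x=(E^{-\top}x)^{\top}D^{-1}(E^{-\top}x)$, and since $E^{-\top}x=(\theta;\ \xi-\mathbb{L}^{\top}\mathbb{M}^{-1}\theta)$, the block-diagonal $D^{-1}$ splits this as $\theta^{\top}\mathbb{M}^{-1}\theta+(\mathbb{L}^{\top}\mathbb{M}^{-1}\theta-\xi)^{\top}\Phi^{-1}(\mathbb{L}^{\top}\mathbb{M}^{-1}\theta-\xi)$, which is exactly the asserted completing-the-square formula (with leading term $\theta^{\top}\mathbb{M}^{-1}\theta$). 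Finally, for the inequality I would argue on the sign of $\Phi$: when $\mathbb{M}>0$ we have $\mathbb{L}^{\top}\mathbb{M}^{-1}\mathbb{L}\geq 0$, so $\mathbb{N}<0$ forces $\Phi=\mathbb{N}-\mathbb{L}^{\top}\mathbb{M}^{-1}\mathbb{L}<0$ and hence $\Phi^{-1}<0$; the second (squared) term is therefore negative semidefinite and the bound $x^{\top}M^{-1}x\leq\theta^{\top}\mathbb{M}^{-1}\theta$ follows.

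There is no deep obstacle here, since the result is elementary linear algebra, so the work is essentially bookkeeping. The only points that demand care are: (a) invoking the symmetry $\mathbb{M}=\mathbb{M}^{\top}$ at the right moments so that $E^{\top}$ really is the lower-triangular factor and $M^{-1}$ inherits the stated symmetric shape; and (b) remembering that $\theta$ and $\xi$ are \emph{matrices}, so the final identity and estimate live in $\mathbb{S}^{k}$ under the Loewner order, meaning the inequality must be read as $Y^{\top}\Phi^{-1}Y\leq 0$ for the matrix $Y=\mathbb{L}^{\top}\mathbb{M}^{-1}\theta-\xi$ rather than as a scalar bound.
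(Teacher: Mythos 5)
Your proof is correct and complete, and there is in fact nothing in the paper to compare it against: the authors do not prove Lemma \ref{lem-matrix} at all, but quote it from \citet{Sun2021} with the remark that ``the proof is straightforward.'' Your block $LDL^{\top}$ (Schur-complement) factorization $M=E^{\top}DE$, with $E=\left(\begin{smallmatrix} I & \mathbb{M}^{-1}\mathbb{L}\\ 0 & I\end{smallmatrix}\right)$ and $D=\mathrm{diag}(\mathbb{M},\Phi)$, is the standard route, and it has the virtue of delivering all three assertions --- invertibility, the explicit inverse $M^{-1}=E^{-1}D^{-1}E^{-\top}$, and the completed-square identity together with the final bound --- from a single algebraic identity, whereas the alternative of verifying the claimed inverse by block multiplication treats them as separate computations. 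Your two cautionary remarks are also right: symmetry of $\mathbb{M}$ is exactly what makes $E^{\top}$ the correct lower factor, and for matrix-valued $\theta$, $\xi$ the last inequality is a Loewner-order statement resting on $Y^{\top}\Phi^{-1}Y\leq 0$ for $Y=\mathbb{L}^{\top}\mathbb{M}^{-1}\theta-\xi$. Note additionally that in the ``in particular'' case the invertibility of $\Phi$ need not be assumed, since $\mathbb{M}>0$ and $\mathbb{N}<0$ give $\Phi\leq\mathbb{N}<0$; your argument establishes this implicitly.

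One substantive point deserves emphasis. Your derivation produces the identity with leading term $\theta^{\top}\mathbb{M}^{-1}\theta$, while the lemma as printed has $\theta^{\top}\mathbb{M}\theta$, both in the identity and in the concluding inequality. The printed form is a typo and is false as written: with $m=n=k=1$, $\mathbb{M}=\tfrac12$, $\mathbb{L}=0$, $\mathbb{N}=-1$, $\theta=1$, $\xi=0$, the left-hand side equals $2$ while $\theta^{\top}\mathbb{M}\theta=\tfrac12$, whereas $\theta^{\top}\mathbb{M}^{-1}\theta=2$. The $\mathbb{M}^{-1}$ form you proved is the one in \citet{Sun2021} and, more importantly, the one this paper actually relies on: in Proposition \ref{prop-comparison} the quantity $\Delta(t;P,i)=\mathcal{L}\mathcal{N}^{-1}\mathcal{L}^{\top}-\mathcal{L}_{1}\mathcal{N}_{11}^{-1}\mathcal{L}_{1}^{\top}$ is rewritten as a single $\Phi^{-1}$-quadratic term; that step is precisely your identity with $\mathbb{M}=\mathcal{N}_{11}$, $\theta=\mathcal{L}_{1}^{\top}$, $\xi=\mathcal{L}_{2}^{\top}$, and the cancellation of the leading terms only works when that leading term is $\mathcal{L}_{1}\mathcal{N}_{11}^{-1}\mathcal{L}_{1}^{\top}$, not $\mathcal{L}_{1}\mathcal{N}_{11}\mathcal{L}_{1}^{\top}$. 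So your parenthetical correction is not cosmetic; it repairs the statement to the version that the rest of the paper uses.
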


% \begin{lemma}[Extended Schur's lemma \cite{Albert.1969}]\label{lem-Schur}
% Let $M\in\mathbb{S}^{n}$, $N\in\mathbb{S}^{m}$, $L\in\mathbb{R}^{n\times m}$. Then the following conditions are equivalent:
% \begin{description}
%   \item[(i)] $M-LN^{\dag}L^{\top}\geq 0$, $N\geq 0$, and $L(I-NN^{\dag})=0$.
%   \item[(ii)] $\left(\begin{array}{cc}
%       M &  L\\
%       L^{\top}& N
%   \end{array}\right)\geq 0$.
% \end{description}
% \end{lemma}

At the end of this section, we present some results regarding the Problem (M-SLQ), which was introduced in the previous section. All proofs of those results can be found in  \citet{Zhang.X.2021_ILQM}, and we omit them here.

If we replace the object \eqref{SLQ-value-function} of Problem (M-SLQ) to that finding a $u^{*}(\cdot)\in \mathcal{U}$ to maximize the $J\left(x,i;u\right)$, then we can solve the corresponding control problem by setting
  \begin{equation}\label{SLQ-relation}
  \bar{J}\left(x,i;u\right)=-J\left(x,i;u\right),\quad \forall (x,i,u(\cdot))\in\mathbb{R}^{n}\times\mathcal{S}\times  \mathcal{U}.
  \end{equation}
 In the following, we define the control problem of finding a $u^{*}(\cdot)\in \mathcal{U}$ to maximize the $J\left(x,i;u\right)$ as Problem ($\overline{\text{M-SLQ}}$) and denote the corresponding homogeneous control problem as Problem ($\overline{\text{M-SLQ}}$)$^{0}$.

For given $(\mathbf{\Theta}(\cdot),\nu(\cdot))\in \mathcal{D}\left(L^{2}(0,T;\mathbb{R}^{m\times n})\right)\times L_{\mathbb{F}}^{2}(0,T;\mathbb{R}^{m})$, let
\begin{equation}\label{closed-outcome}
u(\cdot;x,i)\triangleq \Theta(\cdot,\alpha(\cdot))X(\cdot;x,i,\mathbf{\Theta},\nu)+\nu(\cdot)\in L_{\mathbb{F}}^{2}(0,T;\mathbb{R}^{m}),
\end{equation}
where $X(\cdot;x,i,\mathbf{\Theta},\nu)$ is the solution of following SDE:
\begin{equation}\label{state-LQ-closed}
   \left\{
   \begin{aligned}
   dX(t)&=\left[\big(A(t,\alpha(t))+B(t,\alpha(t))\Theta(t,\alpha(t))\big)X(t)+B(t,\alpha(t))\nu(t)+b(t)\right]dt\\
   &\quad+\left[\big(C(t,\alpha(t))+D(t,\alpha(t))\Theta(t,\alpha(t))\big)X(t)+D(t,\alpha(t))\nu(t)+\sigma(t)\right]dW(t),\qquad t\in[0,T],\\
 X(0)&=x,\quad \alpha(0)=i.
   \end{aligned}
   \right.
 \end{equation}
 In the above, we denote $(\mathbf{\Theta}(\cdot),\nu(\cdot))$ as the closed-loop strategy of Problem (M-SLQ) and define the $u(\cdot;x,i)$ in \eqref{closed-outcome} as the outcome control of closed-loop strategy $(\mathbf{\Theta}(\cdot),\nu(\cdot))$ for initial value $(x,i)$. Additionally, we denote
 \begin{equation}\label{SLQ-cost-closed}
J(x,i;\mathbf{\Theta},\nu)\triangleq J(x,i;u)\quad \text{with} \quad u(\cdot)\equiv u(\cdot;x,i) \text{ defined in \eqref{closed-outcome}}.
 \end{equation}

Now, we recall the definition of open-loop optimal control and closed-loop optimal strategy introduced in \citet{Zhang.X.2021_ILQM}.
\begin{definition}[Open-loop] An element $u^{*}(\cdot)\in L_{\mathbb{F}}^{2}(0,T;\mathbb{R}^{m})$ is called an open-loop optimal control of Problem (M-SLQ) (respectively, Problem ($\overline{\text{M-SLQ}}$)) for the initial value $(x,i)\in \mathbb{R}^{n}\times \mathcal{S}$ if
 $$J(x,i;u^{*})\leq J(x,i;u)\text{ }  (\text{respectively, } J(x,i;u^{*})\geq J(x,i;u)),\quad \forall u(\cdot)\in L_{\mathbb{F}}^{2}(0,T;\mathbb{R}^{m}).$$
\end{definition}

\begin{definition}[Closed-loop]
A pair $(\mathbf{\Theta}^{*}(\cdot),\nu^{*}(\cdot))\in \mathcal{D}\left(L^{2}(0,T;\mathbb{R}^{m\times n})\right)\times L_{\mathbb{F}}^{2}(0,T;\mathbb{R}^{m})$ is called a closed-loop optimal strategy of Problem (M-SLQ) (respectively, Problem ($\overline{\text{M-SLQ}}$))  if for any $(x,i)\in\mathbb{R}^{n}\times\mathcal{S}$,
 $$J(x,i;\mathbf{\Theta}^{*},\nu^{*})\leq J(x,i;u)\text{ } (\text{respectively, } J(x,i;\mathbf{\Theta}^{*},\nu^{*})\geq J(x,i;u)),\quad \forall u(\cdot)\in L_{\mathbb{F}}^{2}(0,T;\mathbb{R}^{m}).$$
\end{definition}

The following result provides an equivalent characterization for open-loop optimal control in terms of FBSDEs.
\begin{lemma}\label{thm-SLQ-open}
  An element $u^{*}(\cdot)\in L_{\mathbb{F}}^{2}(0,T;\mathbb{R}^{m})$ is an open-loop optimal control of Problem (M-SLQ) (respectively, Problem ($\overline{\text{M-SLQ}}$)) for the initial value $(x,i)\in \mathbb{R}^{n}\times \mathcal{S}$ if and only if
  \begin{description}
    \item[(i)] the following convexity (respectively, concavity) condition holds:
    \begin{equation}\label{SLQ-convex-condition}
      J^{0}(0,i;u)\geq 0\text{ } (\text{respectively, } J^{0}(0,i;u)\leq 0),\quad \forall u(\cdot)\in L_{\mathbb{F}}^{2}(0,T;\mathbb{R}^{m}), \quad \forall i\in \mathcal{S};
    \end{equation}
    \item[(ii)] the adapted solution $\left(X^{*},Y^{*},Z^{*},\mathbf{\Gamma}^{*}\right)\in \mathcal{S}_{\mathbb{F}}^{2}(0,T;\mathbb{R}^{n})\times \mathcal{S}_{\mathbb{F}}^{2}(0,T;\mathbb{R}^{n})\times L_{\mathbb{F}}^{2}(0,T;\mathbb{R}^{n})\times\mathcal{D}\left(L_{\mathcal{P}}^{2}(0,T;\mathbb{R}^{n})\right)$ to the  FBSDEs
    \begin{equation}\label{SLQ-FBSDE}
      \left\{
      \begin{aligned}
      dX^{*}(t)&=\left[A(t,\alpha(t))X^{*}(t)+B(t,\alpha(t))u^{*}(t)+b(t)\right]dt\\
   &\quad+\left[C(t,\alpha(t))X^{*}(t)+D(t,\alpha(t))u^{*}(t)+\sigma(t)\right]dW(t),\\
      dY^{*}(t)&=-\big[A(t,\alpha(t))^{\top}Y^{*}(t)+C(t,\alpha(t))^{\top}Z^{*}(t)
      +Q(t,\alpha(t))^{\top}X^{*}(t)\\
      &\quad+S(t,\alpha(t))^{\top}u^{*}(t)+q(t)\big]dt+Z^{*}(t)dW(t)+\mathbf{\Gamma}^{*}(t)\cdot d\widetilde{\mathbf{N}}(t),\quad t\in[0,T],\\
      X^{*}(0)&=x,\quad \alpha(0)=i,\quad Y^{*}(T)=G(\alpha(T)) X^{*}(T)+g,
      \end{aligned}
      \right.
    \end{equation}
    satisfies the following stationary condition:
  \begin{equation}\label{SLQ-stationary-condition}
     B(t,\alpha(t))^{\top}Y^{*}(t)+ D(t,\alpha(t))^{\top}Z^{*}(t)+S(t,\alpha(t))X^{*}(t)
     +R(t,\alpha(t))u^{*}(t)+\rho(t)=0,\quad a.e.\quad a.s..
  \end{equation}
  \end{description}
\end{lemma}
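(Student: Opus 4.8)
The plan is to characterize optimality through the first and second variations of the quadratic cost functional. Fix the candidate $u^{*}$, let $X^{*}=X(\cdot;x,i,u^{*})$ be the associated state, and for an arbitrary direction $v(\cdot)\in\mathcal{U}$ and $\epsilon\in\mathbb{R}$ consider the perturbed control $u^{*}+\epsilon v$. Because the state equation \eqref{state-notation} is affine in $(X,u)$, its solution splits as $X(\cdot;x,i,u^{*}+\epsilon v)=X^{*}+\epsilon\,X^{0}(\cdot;0,i,v)$, where $X^{0}(\cdot;0,i,v)$ solves the homogeneous dynamics with control $v$ and zero initial state. Substituting into \eqref{performance-functional-notation} and exploiting that the cost is quadratic, every term reorganizes by powers of $\epsilon$, yielding
\begin{equation*}
J(x,i;u^{*}+\epsilon v)=J(x,i;u^{*})+2\epsilon\,\mathcal{J}_{1}(v)+\epsilon^{2}J^{0}(0,i;v),
\end{equation*}
in which the $\epsilon^{2}$-coefficient is precisely the homogeneous cost $J^{0}(0,i;v)$ and $\mathcal{J}_{1}(v)$, linear in $v$, is the first variation.

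The second step is to rewrite $\mathcal{J}_{1}(v)$ through the adjoint equation. Given $u^{*}$, I would let $(Y^{*},Z^{*},\mathbf{\Gamma}^{*})$ be the unique adapted solution of the linear BSDE in \eqref{SLQ-FBSDE} with terminal value $G(\alpha(T))X^{*}(T)+g$, whose existence and uniqueness under (A1)--(A2) is standard for BSDEs driven by $W$ and the chain martingale $\widetilde{\mathbf{N}}$. Applying It\^o's formula to $t\mapsto\langle Y^{*}(t),X^{0}(t;0,i,v)\rangle$ on $[0,T]$ and taking expectations, the stochastic integrals against $W$ and against $\widetilde{\mathbf{N}}$ vanish in mean, the covariation between $X^{0}(\cdot;0,i,v)$ and the jump part $\mathbf{\Gamma}^{*}\cdot d\widetilde{\mathbf{N}}$ of $Y^{*}$ is zero because $X^{0}(\cdot;0,i,v)$ is continuous, and the drift terms carrying $A$ and $C$ cancel in pairs. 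Substituting the resulting identity for $\mathbb{E}\langle G(\alpha(T))X^{*}(T)+g,X^{0}(T;0,i,v)\rangle$ back into $\mathcal{J}_{1}(v)$, the state-dependent contributions cancel and one is left with
\begin{equation*}
\mathcal{J}_{1}(v)=\mathbb{E}\int_{0}^{T}\big\langle B^{\top}Y^{*}+D^{\top}Z^{*}+SX^{*}+Ru^{*}+\rho,\,v\big\rangle\,dt,
\end{equation*}
all coefficients being evaluated at $(t,\alpha(t))$.

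Both directions then follow from analysing the scalar quadratic $\epsilon\mapsto 2\epsilon\,\mathcal{J}_{1}(v)+\epsilon^{2}J^{0}(0,i;v)$. If $u^{*}$ is optimal, this is $\geq 0$ for every $\epsilon\in\mathbb{R}$ and every $v$, which forces $J^{0}(0,i;v)\geq 0$ (otherwise it diverges to $-\infty$) and, in turn, $\mathcal{J}_{1}(v)=0$; by the arbitrariness of $v\in\mathcal{U}$ the vanishing of $\mathcal{J}_{1}$ is equivalent to the stationary condition \eqref{SLQ-stationary-condition} holding $dt\otimes d\mathbb{P}$-a.e. Conversely, if the convexity condition and \eqref{SLQ-stationary-condition} hold, taking $\epsilon=1$ and $v=u-u^{*}$ gives $J(x,i;u)-J(x,i;u^{*})=J^{0}(0,i;u-u^{*})\geq 0$ for every $u$, so $u^{*}$ is optimal. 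The statement for Problem $(\overline{\text{M-SLQ}})$ is recovered verbatim after replacing $J$ by $-J$ as in \eqref{SLQ-relation}, turning convexity into concavity.

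The hard part will be upgrading the convexity obtained for the single initial regime $i$ to the condition $J^{0}(0,j;\cdot)\geq 0$ for all $j\in\mathcal{S}$ demanded in (i), since optimality at a fixed $(x,i)$ only delivers the second-order inequality for that one state. Here I would exploit the irreducibility of $\alpha(\cdot)$: for $s\in(0,T)$ restrict to directions $v$ vanishing on $[0,s]$, so that $X^{0}(\cdot;0,i,v)\equiv 0$ there and the cost reduces to an expectation over $[s,T]$ started from state $0$ at time $s$. Conditioning on $\mathcal{F}_{s}$ and using $\mathbb{P}(\alpha(s)=j)>0$, which irreducibility guarantees for every $s>0$ and $j\in\mathcal{S}$, the nonnegativity of $J^{0}(0,i;v)$ over all such $v$ transfers to the nonnegativity of the conditional cost started from $(0,j)$ on $[s,T]$; letting $s\downarrow 0$ and invoking continuity of the coefficients yields $J^{0}(0,j;\cdot)\geq 0$. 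Apart from this conditioning step, the only other point needing care is the justification that the chain-martingale term in It\^o's formula contributes nothing in expectation, which I would pin down using the continuity of the forward variational state together with the integrability of $\mathbf{\Gamma}^{*}$.
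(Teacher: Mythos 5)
You cannot be compared against an in-paper argument here, because the paper never proves this lemma: it is recalled without proof, with the reader referred to \citet{Zhang.X.2021_ILQM} (``All proofs of those results can be found in...''). Judged on its own merits, your argument follows exactly the standard variational route that the cited reference uses: split $X(\cdot;x,i,u^*+\epsilon v)=X^*+\epsilon X^{0}(\cdot;0,i,v)$, expand the quadratic cost as $J(x,i;u^*)+2\epsilon\mathcal{J}_1(v)+\epsilon^2 J^{0}(0,i;v)$, represent $\mathcal{J}_1$ via the adjoint BSDE in \eqref{SLQ-FBSDE} by applying It\^o's formula to $\langle Y^*,X^{0}\rangle$ (your remarks that the chain-martingale term has zero covariation with the continuous variational state and vanishes in expectation are correct), and read both implications off the sign of the scalar quadratic in $\epsilon$; the identification of $\mathcal{J}_1\equiv 0$ with the stationary condition \eqref{SLQ-stationary-condition}, and the fact that sufficiency needs convexity only at the given regime $i$, are also right. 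Your extra work on the quantifier ``$\forall i\in\mathcal{S}$'' in \eqref{SLQ-convex-condition} is not superfluous: the paper genuinely uses this all-regimes version in the proof of Theorem \ref{thm-ZLQ-open-loop}.

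That said, the step you yourself flag as hard is the one place your write-up is too thin, and it needs two explicit repairs. First, nonnegativity of an expectation does not transfer to conditional costs by ``conditioning on $\mathcal{F}_s$''; you must localize by testing with controls of the form $v=\psi\,1_{(s,T]}\,1_{\{\alpha(s)=j\}}$ (admissible because $1_{\{\alpha(s)=j\}}$ is $\mathcal{F}_s$-measurable), which yields $0\leq J^{0}(0,i;v)=\mathbb{P}(\alpha(s)=j)\,\tilde J^{0}_{[s,T]}(0,j;\psi)$ and hence, by irreducibility, convexity of the $[s,T]$-problem started from $(0,j)$ over post-$s$-adapted controls $\psi$. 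Second, the passage $s\downarrow 0$ is not a matter of ``continuity of the coefficients'': for a control $v$ of the $(0,j)$-problem, the truncation $v\,1_{(s,T]}$ leaves the chain started at $j$ in a \emph{random} regime at time $s$, so to conclude $J^{0}(0,j;v\,1_{(s,T]})\geq 0$ you need the $[s,T]$-convexity at \emph{all} regimes together with the tower property and a freezing (regular conditional probability) argument handling the $\mathcal{F}_s$-dependence of $v|_{(s,T]}$; only then does $L^{2}$-continuity of the bounded quadratic functional $v\mapsto J^{0}(0,j;v)$ (not coefficient continuity) finish the limit. Both repairs are standard, so I regard the proposal as essentially correct, but in its present form these two points are asserted rather than proved.
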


For given $\mathbf{P}(\cdot)\equiv\left(P(\cdot,1),P(\cdot,2),\cdots,P(\cdot,L)\right)\in \mathcal{D}\left(C\big([0,T],\mathbb{S}^{n}\big)\right)$, we introduce the following notations:
\begin{equation}\label{notation-LMN}
\left\{
\begin{array}{l}
\mathcal{M}(t;P,i)\triangleq P(t,i)A(t,i)+A(t,i)^{\top}P(t,i)+C(t,i)^{\top}P(t,i)C(t,i)+Q(t,i)+\sum_{j=1}^{L}\pi_{ij}(t)P(t,j),\\[2mm]
\mathcal{L}(t;P,i)\triangleq P(t,i)B(t,i)+C(t,i)^{\top}P(t,i)D(t,i)+S(t,i)^{\top},\\[2mm]
\mathcal{N}(t;P,i)\triangleq D(t,i)^{\top}P(t,i)D(t,i)+R(t,i),\quad t\in[0,T],\quad i\in\mathcal{S}.
\end{array}
\right.
\end{equation}
   Then, the following result provides the closed-loop solvabilities of Problem (M-SLQ) and Problem ($\overline{\text{M-SLQ}}$) in terms of CDREs.

\begin{lemma}\label{thm-SLQ-closed}
  Problem (M-SLQ) (respectively, Problem ($\overline{\text{M-SLQ}}$)) is closed-loop solvable if and only if the following conditions hold:
   \begin{description}
   \item[(i)] The CDREs
   \begin{equation}\label{CDREs}
     \left\{
     \begin{aligned}
     &\dot{P}(t,i)+\mathcal{M}(t;P,i)-\mathcal{L}(t;P,i)\mathcal{N}(t;P,i)^{\dag}\mathcal{L}(t;P,i)^{\top}=0,\quad t\in[0,T],\\
     &P(T,i)=G(i),\quad i\in\mathcal{S},
     \end{aligned}
     \right.
   \end{equation}
   admits a unique solution $\mathbf{P}(\cdot)\in \mathcal{D}\left(C\big([0,T],\mathbb{S}^{n}\big)\right)$ such that
   \begin{equation}\label{CDREs-constraint}
       \left\{
       \begin{aligned}
       &\mathcal{R}\left(\mathcal{L}(t,P,i)^{\top}\right)\subseteq \mathcal{R}\left(\mathcal{N}(t,P,i)\right),\quad a.e.\text{ }t\in[0,T],\\
       &\mathcal{N}(\cdot,P,i)^{\dag}\mathcal{L}(\cdot,P,i)^{\top}\in L^{2}(0,T;\mathbb{R}^{m\times n}),\\
       &\mathcal{N}(t,P,i)\geq 0 \text{ } (\text{respectively, }  \mathcal{N}(t,P,i)\leq 0),\quad a.e.\text{ }t\in[0,T];
       \end{aligned}
       \right.
   \end{equation}
   \item[(ii)] The adapted solution $\left(\eta(\cdot),\zeta(\cdot),\mathbf{z}(\cdot)\right)$ of the following BSDE
   \begin{equation}\label{eta}
       \left\{
      \begin{aligned}
        d\eta(t)&=-\big\{\big[A(\alpha)^{\top}-\mathcal{L}(P,\alpha)\mathcal{N}(P,\alpha)^{\dag}B(\alpha)^{\top}\big]\eta+\big[C(\alpha)^{\top}-\mathcal{L}(P,\alpha)\mathcal{N}(P,\alpha)^{\dag}D(\alpha)^{\top}\big]\zeta\\
        &\quad+\big[C(\alpha)^{\top}-\mathcal{L}(P,\alpha)\mathcal{N}(P,\alpha)^{\dag}D(\alpha)^{\top}\big]P(\alpha)\sigma-\mathcal{L}(P,\alpha)\mathcal{N}(P,\alpha)^{\dag}\rho+P(\alpha)b+q\big\}dt\\
        &\quad+\zeta dW(t)+\mathbf{z}\cdot d\mathbf{\widetilde{N}}(t),\quad t\in [0,T],\\
        \eta(T)&=g,
      \end{aligned}
      \right.
   \end{equation}
   satisfies
   \begin{equation}\label{eta-constraint}
       \left\{
       \begin{aligned}
       &\widetilde{\rho}(t)\in \mathcal{R}\left(\mathcal{N}(t,P,i)\right),\quad a.e.\text{ }t\in[0,T],\\
       & \mathcal{N}(\cdot,P,\alpha(\cdot))^{\dag}\widetilde{\rho}(\cdot)\in L_{\mathbb{F}}^{2}(0,T;\mathbb{R}^{m}),
       \end{aligned}
       \right.
   \end{equation}
with
\begin{equation}\label{SLQ-rho}
    \widetilde{\rho}(t)\triangleq B(t,\alpha(t))^{\top}\eta(t)+D(t,\alpha(t))^{\top}\zeta(t)+D(t,\alpha(t))^{\top}P(t,\alpha(t))\sigma(t)+\rho(t),\quad a.e.\text{ }t\in[0,T],
\end{equation}
  \end{description}
  In this case, the closed-loop optimal strategy $(\widehat{\mathbf{\Theta}}(\cdot),\widehat{\nu}(\cdot))\in \mathcal{D}\left(L^{2}(0,T;\mathbb{R}^{m\times n})\right)\times L_{\mathbb{F}}^{2}(0,T;\mathbb{R}^{m})$ of Problem (M-SLQ) (respectively, Problem ($\overline{\text{M-SLQ}}$))  admits the following representation:
  \begin{equation}\label{SLQ-closed-optimal}
      \left\{
      \begin{aligned}
      &\widehat{\Theta}(\cdot,i)=-\mathcal{N}(\cdot,P,i)^{\dag}\mathcal{L}(\cdot,P,i)^{\top}
      +\big[I-\mathcal{N}(\cdot,P,i)^{\dag}\mathcal{N}(\cdot,P,i)\big]\Pi(\cdot,i),\quad i\in\mathcal{S},\\
      &\widehat{\nu}(\cdot)=-\mathcal{N}(\cdot,P,\alpha(\cdot))^{\dag}\widetilde{\rho}(\cdot)
      +\big[I-\mathcal{N}(\cdot,P,\alpha(\cdot))^{\dag}\mathcal{N}(\cdot,P,\alpha(\cdot))\big]\nu(\cdot),
      \end{aligned}
      \right.
  \end{equation}
  for some $\Pi(\cdot,i)\in L^{2}(0,T;\mathbb{R}^{m\times n})$ and $\nu(\cdot)\in L_{\mathbb{F}}^{2}(0,T;\mathbb{R}^{m})$, and the value function is given by
  \begin{equation}\label{SLQ-value}
      V(x,i)=\mathbb{E}\Big\{\big<P(0,i)x,x\big>+2\big<\eta(0),x\big>+\int_{0}^{T}\big[
      \big<P(\alpha)\sigma,\sigma\big>+2\big<\eta,b\big>+2\big<\zeta,\sigma\big>
      -\big<\mathcal{N}(P,\alpha)^{\dag}\widetilde{\rho},\widetilde{\rho}\big>\big]dt\Big\}.
  \end{equation}
\end{lemma}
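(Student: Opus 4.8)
The plan is to prove both implications by the completion-of-squares method adapted to the regime-switching system, supplemented by a principle-of-optimality argument for the necessity. Throughout I abbreviate the blocks in \eqref{notation-LMN} evaluated along $\alpha(\cdot)$ as $\mathcal{M}(P,\alpha),\mathcal{L}(P,\alpha),\mathcal{N}(P,\alpha)$.

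\textbf{Sufficiency.} Assume $\mathbf{P}(\cdot)$ solves the CDREs \eqref{CDREs} subject to \eqref{CDREs-constraint} and $(\eta,\zeta,\mathbf{z})$ solves \eqref{eta} subject to \eqref{eta-constraint}. For an arbitrary control $u(\cdot)\in\mathcal{U}$ with state $X(\cdot)$, I would apply the generalized It\^{o} formula for Markov-modulated diffusions to $\langle P(t,\alpha(t))X(t),X(t)\rangle+2\langle\eta(t),X(t)\rangle$. Besides the usual drift and diffusion contributions, this produces the generator term $\sum_{j}\pi_{ij}(t)P(t,j)$ (already folded into $\mathcal{M}$) and martingale increments driven by $dW$ and $d\widetilde{\mathbf{N}}$, whose expectations vanish. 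Using the terminal data $P(T,i)=G(i)$, $\eta(T)=g$ and substituting the CDRE and the $\eta$-BSDE to cancel the terms linear and quadratic in $X$, the cost rearranges into the completed-square form
\begin{multline*}
J(x,i;u)=\langle P(0,i)x,x\rangle+2\langle\eta(0),x\rangle+\mathbb{E}\int_0^T\langle\mathcal{N}(P,\alpha)(u-\widehat{u}),u-\widehat{u}\rangle\,dt\\
+\mathbb{E}\int_0^T\big[\langle P(\alpha)\sigma,\sigma\rangle+2\langle\eta,b\rangle+2\langle\zeta,\sigma\rangle-\langle\mathcal{N}(P,\alpha)^{\dag}\widetilde{\rho},\widetilde{\rho}\rangle\big]\,dt,
\end{multline*}
with $\widehat{u}\triangleq-\mathcal{N}(P,\alpha)^{\dag}\mathcal{L}(P,\alpha)^{\top}X-\mathcal{N}(P,\alpha)^{\dag}\widetilde{\rho}$. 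The range inclusions in \eqref{CDREs-constraint} and \eqref{eta-constraint}, via Remark \ref{rmk-Pseudoinverse}(ii)--(iii), are exactly what let the cross terms be absorbed into the square, and the two square-integrability requirements make $\widehat{u}$ admissible. Since $\mathcal{N}(P,\alpha)\geq0$ (resp. $\leq0$), the first integral is nonnegative (resp. nonpositive) and vanishes precisely when $\mathcal{N}(P,\alpha)(u-\widehat{u})=0$; the outcome of the feedback pair \eqref{SLQ-closed-optimal} satisfies this identity because $\mathcal{N}(I-\mathcal{N}^{\dag}\mathcal{N})=0$, which establishes closed-loop optimality and simultaneously yields the value formula \eqref{SLQ-value}.

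\textbf{Necessity.} Suppose a closed-loop optimal strategy $(\mathbf{\Theta}^{*},\nu^{*})$ exists. I would first observe that its restriction to any $[t,T]$ remains closed-loop optimal for the problem started at $t$, since the outcome is generated by a feedback law and the pair $(X,\alpha)$ is Markov; this time-consistency defines the homogeneous value $V^{0}(t,x,i)$ on all of $[0,T]$. By homogeneity and the optimality of a common feedback, $V^{0}(t,\cdot,i)$ is a quadratic form $\langle P(t,i)\cdot,\cdot\rangle$ with $P(\cdot,i)$ continuous and symmetric; writing the dynamic-programming identity between $t$ and $t+h$, dividing by $h$ and letting $h\downarrow0$ yields the CDRE \eqref{CDREs}, the coupling $\sum_{j}\pi_{ij}P(\cdot,j)$ arising from the compensator of $\alpha$. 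Finiteness of $V^{0}$ and attainment of the infimum then force $\mathcal{N}(P,\alpha)\geq0$ together with $\mathcal{R}(\mathcal{L}^{\top})\subseteq\mathcal{R}(\mathcal{N})$ and $\mathcal{N}^{\dag}\mathcal{L}^{\top}\in L^{2}$, i.e. \eqref{CDREs-constraint}; for the inhomogeneous data, subtracting $V^{0}$ and matching the affine-in-$x$ part identifies $\eta$ through the BSDE \eqref{eta} and produces \eqref{eta-constraint}.

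\textbf{Main obstacle.} The delicate direction is necessity, and specifically the degeneracy of $\mathcal{N}(P,\alpha)$: because it is only positive semidefinite, one must carry the Moore--Penrose inverse throughout and prove the two range conditions rather than invertibility. Verifying that $\mathcal{N}^{\dag}\mathcal{L}^{\top}$ and $\mathcal{N}^{\dag}\widetilde{\rho}$ land in the stated $L^{2}$-spaces, and not merely that they are measurable, is the technical crux, since $t\mapsto\mathcal{N}(t)^{\dag}$ is discontinuous wherever the rank of $\mathcal{N}$ drops. The coupled generator term $\sum_{j}\pi_{ij}P(\cdot,j)$ adds a further layer, turning \eqref{CDREs} into a genuinely coupled system whose continuity and symmetry must be established simultaneously across all regimes $i\in\mathcal{S}$.
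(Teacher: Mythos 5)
First, a point of comparison: the paper itself does not prove this lemma; it is quoted from \citet{Zhang.X.2021_ILQM} ("All proofs of those results can be found in... and we omit them here"), so your proposal must be measured against the argument given there. Your sufficiency half is correct and coincides with that argument: applying It\^o's formula to $\langle P(t,\alpha(t))X(t),X(t)\rangle+2\langle \eta(t),X(t)\rangle$, using the range inclusions in \eqref{CDREs-constraint} and \eqref{eta-constraint} to absorb the cross terms into the square, the sign condition on $\mathcal{N}(P,\alpha)$ to control it, and the two $L^2$ conditions to make the feedback outcome admissible is exactly the completion-of-squares proof, and it also delivers \eqref{SLQ-value}.

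The genuine gap is in your necessity direction, which replaces the reference's FBSDE-decoupling argument by dynamic programming, and two of its steps do not hold as stated. First, the assertion that the restriction of $(\mathbf{\Theta}^{*},\nu^{*})$ to $[t,T]$ "remains closed-loop optimal... since the outcome is generated by a feedback law and $(X,\alpha)$ is Markov" is not a one-line fact: closed-loop optimality on $[t,T]$ must hold for \emph{every} deterministic initial state $x$ at time $t$, whereas the natural patching argument (follow the strategy on $[0,t)$, then compare on $[t,T]$) only yields optimality in expectation along the law of the closed-loop state at time $t$, which need not reach a given $x$ (take $b=\sigma=0$ and initial state $0$). This restriction property is true but is a standalone lemma with its own proof in the cited literature. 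Second, "writing the dynamic-programming identity... dividing by $h$ and letting $h\downarrow 0$" presupposes that $t\mapsto P(t,i)$ is (a.e.) differentiable, which you have not established: from your construction $P$ is only known to be continuous, $\Theta^{*}(\cdot,i)$ is merely $L^{2}$, and the infimum inside the DPP involves $\mathcal{N}^{\dag}$, whose rank can drop on sets of positive measure. The standard route avoids both issues: closed-loop optimality implies the outcome is an open-loop optimal control for every $(x,i)$, hence by Lemma \ref{thm-SLQ-open} the FBSDE and stationarity condition \eqref{SLQ-stationary-condition} hold; one then \emph{defines} $\mathbf{P}$ by the (always solvable, automatically symmetric and absolutely continuous) Lyapunov ODE associated with $\Theta^{*}$, identifies $Y^{*}=P(\alpha)X^{*}+\eta$ by uniqueness of linear BSDE solutions, and reads off from stationarity at arbitrary initial times and states that $\mathcal{N}(P,i)\Theta^{*}(\cdot,i)+\mathcal{L}(P,i)^{\top}=0$ and $\mathcal{N}(P,\alpha)\nu^{*}+\widetilde{\rho}=0$; the range and $L^{2}$ conditions, the Riccati form of \eqref{CDREs}, and the representation \eqref{SLQ-closed-optimal} then follow algebraically via Lemma \ref{lem-Pseudoinverse}. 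Finally, two parts of the statement are left unaddressed in your proposal: the \emph{uniqueness} of the constrained solution $\mathbf{P}$, and the claim that \emph{every} closed-loop optimal strategy (not just the one you construct) has the form \eqref{SLQ-closed-optimal}, which requires the invertibility of the closed-loop state flow to pass from $\mathcal{N}(\Theta-\widehat{\Theta})X=0$ along trajectories to $\mathcal{N}\Theta+\mathcal{L}^{\top}=0$ a.e.
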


In the above, we omit the argument $t$ for simplicity. If no confusion arises, such abbreviated notations will be frequently used in the rest of the paper.

\begin{remark}\label{rmk-SLQ-closed-2}\rm
Clearly, according to the basic properties of the pseudo-inverse, one can easily verify that
a pair $(\widehat{\mathbf{\Theta}}(\cdot),\widehat{\nu}(\cdot))\in \mathcal{D}\left(L^{2}(0,T;\mathbb{R}^{m\times n})\right)\times L_{\mathbb{F}}^{2}(0,T;\mathbb{R}^{m})$ is a closed-loop optimal strategy of Problem (M-SLQ) (respectively, Problem ($\overline{\text{M-SLQ}}$) ) if and only if the following conditions hold:
\begin{description}
\item[(i)] The CDREs \eqref{CDREs} admits a unique solution such that
 \begin{equation}\label{CDREs-constraint-2}
 \left\{
 \begin{aligned}
       &\mathcal{N}(t,P,i)\widehat{\Theta}(t,i)+\mathcal{L}(t,P,i)^{\top}=0,
       \quad a.e.\text{ }t\in[0,T],\\
       &\mathcal{N}(t,P,i)\geq 0\text{ } (\text{respectively, } \mathcal{N}(t,P,i)\leq 0),\quad a.e.\text{ }t\in[0,T];
\end{aligned}
\right.
   \end{equation}
 \item[(ii)]  The solution $\left(\eta(\cdot),\zeta(\cdot),\mathbf{z}(\cdot)\right)$ of the  BSDE \eqref{eta} satisfies
 \begin{equation}\label{eta-constraint-2}
      \mathcal{N}(t,P,\alpha(t))\widehat{\nu}(t)+\widetilde{\rho}(t)=0,\quad a.e.\text{ } t\in[0,T].
   \end{equation}
\end{description}
\end{remark}

The following result further provides the solvability of CDREs \eqref{CDREs}.
\begin{lemma}\label{thm-SLQ-uniform-convex}
  If for some constant $\lambda>0$,
  $$J^{0}(x,i;u)\geq \lambda\mathbb{E}\int_{0}^{T}\big|u(t)\big|^2dt\text{ } (\text{respectively, } J^{0}(x,i;u)\leq -\lambda\mathbb{E}\int_{0}^{T}\big|u(t)\big|^2dt),$$
  then the CDREs \eqref{CDREs} admits a unique solution such that
  $\mathcal{N}(\cdot,P,i)\gg0$ (respectively, $\mathcal{N}(\cdot,P,i)\ll 0$) for all $ i\in\mathcal{S}.$
    In particular, if the standard condition
  $$
  \begin{array}{c}
  G(i)\geq 0,\quad R(\cdot,i)\gg0,\quad Q(\cdot,i)-S(\cdot,i)^{\top}R(\cdot,i)S(\cdot,i)\geq 0,\quad i\in\mathcal{S},\\[2mm]
  (\text{respectively, }G(i)\leq 0,\quad R(\cdot,i)\ll 0,\quad Q(\cdot,i)-S(\cdot,i)^{\top}R(\cdot,i)S(\cdot,i)\leq 0,\quad i\in\mathcal{S}),
  \end{array}
  $$
  holds, then the CDREs \eqref{CDREs} admits a unique solution such that
  $P(\cdot,i)\geq 0$ (respectively, $P(\cdot,i)\leq 0$) for all $ i\in\mathcal{S}.$
\end{lemma}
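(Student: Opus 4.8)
Since the concave case reduces to the convex one through the sign change $\bar J=-J$ recorded in \eqref{SLQ-relation}, I would treat only the hypothesis $J^0(x,i;u)\ge\lambda\,\mathbb{E}\int_0^T|u(t)|^2dt$ and aim at a global solution of \eqref{CDREs} with $\mathcal{N}(\cdot,P,i)\gg0$. The backbone is a continuation (maximal-interval) argument for the matrix system $\dot P(t,i)=-\mathcal{M}(t;P,i)+\mathcal{L}(t;P,i)\mathcal{N}(t;P,i)^{-1}\mathcal{L}(t;P,i)^\top$, $P(T,i)=G(i)$, driven by two a priori estimates extracted from uniform convexity: a uniform gap $\mathcal{N}(\cdot,P,i)\ge\lambda I$ and a uniform two-sided bound $|P(\cdot,i)|\le C$. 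The point of insisting on $\mathcal{N}\gg0$ is that then the pseudo-inverse is a genuine inverse, the constraints \eqref{CDREs-constraint} hold automatically (since $\mathcal{R}(\mathcal{L}^\top)\subseteq\mathbb{R}^m=\mathcal{R}(\mathcal{N})$ and $\mathcal{N}^{-1}\mathcal{L}^\top\in L^\infty\subset L^2$), and the right-hand side is locally Lipschitz in $P$ on the open region $\mathcal{N}\gg0$, so local existence and local uniqueness are standard Carath\'eodory theory.

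First I would establish a restriction property: uniform convexity on $[0,T]$ with constant $\lambda$ for every initial regime forces the same uniform convexity, with the same $\lambda$, for the homogeneous problem posed on any $[t,T]$ and any initial regime $j$. The mechanism is to extend a control $\tilde u$ on $[t,T]$ by $0$ on $[0,t]$ and localize it to the event $\{\alpha(t)=j\}$: with zero initial state the homogeneous trajectory stays at $0$ on $[0,t]$, so $J^0$ over $[0,T]$ equals $\mathbb{P}(\alpha(t)=j)$ times the $[t,T]$-cost started from regime $j$, and dividing by $\mathbb{P}(\alpha(t)=j)>0$ (positive by irreducibility for $t>0$) transfers the inequality. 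The main estimate then reads: whenever $P$ solves \eqref{CDREs} on some $[\tau,T]$ with $\mathcal{N}\gg0$, Itô's formula for $\langle P(s,\alpha(s))X(s),X(s)\rangle$ along the homogeneous dynamics gives the completion-of-squares identity $J^0_\tau(x,i;u)=\langle P(\tau,i)x,x\rangle+\mathbb{E}\int_\tau^T\langle\mathcal{N}(s;P,\alpha)v(s),v(s)\rangle ds$ with $v=u-\widehat\Theta X$ and $\widehat\Theta=-\mathcal{N}^{-1}\mathcal{L}^\top$. Taking $x=0$ and probing with controls whose $v$ concentrates near a Lebesgue point $s_0$ on $\{\alpha(s_0)=j\}$ (so that $X$, hence $u-v$, is of higher order) converts the restricted uniform convexity into $\mathcal{N}(s_0;P,j)\ge\lambda I$, a gap independent of $\tau$; dropping the nonnegative integral identifies $\langle P(\tau,i)x,x\rangle=V^0(\tau;x,i)$, and testing with $u=0$ (upper bound) together with uniform convexity (lower bound) yields $|P(\tau,i)|\le C$ uniformly. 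A companion short-horizon expansion of $J^0$ near $T$, where the diffusion dominates so $\mathbb{E}\langle GX(T),X(T)\rangle\approx\mathbb{E}\int\langle D^\top GD\,u,u\rangle$ by the It\^o isometry, shows $R(\cdot,i)+D(\cdot,i)^\top G(i)D(\cdot,i)\ge\lambda I\gg0$ near $T$; this is what launches the local solution at the terminal time.

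With these in hand the continuation step is routine: let $[\tau_*,T]$ be the maximal subinterval carrying a solution with $\mathcal{N}\gg0$. There $|P|\le C$ and $\mathcal{N}\ge\lambda I$, so $\dot P$ is bounded, $P$ extends continuously to $\tau_*$, and the a priori estimate gives $\mathcal{N}(\tau_*;P,i)\ge\lambda I\gg0$; hence the system restarts at $\tau_*$ and continues strictly past it unless $\tau_*=0$. This forces existence on all of $[0,T]$ with $\mathcal{N}(\cdot,P,i)\gg0$, and global uniqueness follows from local Lipschitzness on $\{\mathcal{N}\gg0\}$. For the ``in particular'' claim, $R(\cdot,i)\gg0$ lets me complete the square in $u$ to obtain $J^0(x,i;u)\ge0$ on every $[t,T]$, whence $P(t,i)=V^0(t;x,i)\ge0$; then $\mathcal{N}=D^\top PD+R\ge R\gg0$ supplies the non-degeneracy for free, and the identical continuation scheme produces the global solution with $P(\cdot,i)\ge0$ (the pair ``$\mathcal{N}\gg0$'' and ``$P\ge0$'' reinforcing each other consistently along the flow).

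I expect the main obstacle to be the rigorous, uniform identification of the ODE solution $P$ with the value-function coefficient on the moving interval — i.e.\ making the verification/completion-of-squares step legitimate precisely while $\mathcal{N}\gg0$ — and the careful extraction of the pointwise gap $\mathcal{N}\ge\lambda I$, especially the endpoint analysis at $T$ and the bookkeeping of the random regime $\alpha(\cdot)$ when localizing test controls to a fixed state $j$.
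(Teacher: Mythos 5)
You should know at the outset that the paper contains no proof of this lemma: it is imported as a preliminary result with the explicit remark that all proofs of the Section~\ref{section-2} lemmas ``can be found in \citet{Zhang.X.2021_ILQM}, and we omit them here.'' So there is no in-paper argument to compare against line by line; the fair comparison is with the cited reference and with the closest methodology the paper does contain, namely Section~\ref{section-5}. Judged on its merits, your proposal is a correct reconstruction of the standard argument and all of its steps are fillable: restriction of uniform convexity to $[t,T]$ and to each regime $j$ by extension-by-zero plus localization on $\{\alpha(t)=j\}$ (using irreducibility so that $\mathbb{P}(\alpha(t)=j)>0$); the completion-of-squares identity $J^0_\tau(x,i;u)=\langle P(\tau,i)x,x\rangle+\mathbb{E}\int_\tau^T\langle \mathcal{N}(s;P,\alpha)v(s),v(s)\rangle\,ds$ on any subinterval where a solution with $\mathcal{N}\gg0$ exists; the probing argument at Lebesgue points giving the $\tau$-independent gap $\mathcal{N}\ge\lambda I$; identification of $P$ with the value-function coefficient giving $|P|\le C$; a separate launching estimate $R+D^{\top}GD\ge\lambda I$ near $T$; continuation to $[0,T]$; and, for the standard condition, the mutually reinforcing invariants $P\ge0$ and $\mathcal{N}\ge R\gg0$. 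Structurally your route is the same family as the paper's own proof of Theorem~\ref{thm-ZLQ-CDREs-solvability} for the game CDREs \eqref{ZLQ-CCDREs} (comparison in Proposition~\ref{prop-comparison}, local Lipschitz/contraction in Proposition~\ref{prop-local-solvability}, continuation), with one essential difference: the paper gets its a priori bounds and non-degeneracy constants for free by invoking this very lemma (the solutions $\mathbf{P}_1,\mathbf{P}_2$ of \eqref{section-5-CDREs-k} and the constant $\beta$ in the proof of Proposition~\ref{prop-local-solvability}), whereas you must generate them from scratch, which is exactly why your proof needs the variational machinery (value-function identification and concentrated test controls) that Section~\ref{section-5} never touches. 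Two points in your sketch deserve care in a full write-up but are not gaps in the strategy: the uniform lower bound on $P(\tau,i)$ does not follow from testing $u=0$ alone --- one must bound the cross term of the quadratic functional by Cauchy--Schwarz against the uniformly convex part, giving $J^0_\tau(x,i;u)\ge -C|x|^2$; and the localization step requires honest conditioning/filtration bookkeeping (the conditional cost on $\{\alpha(t)=j\}$ equals, by the Markov property of $(\alpha,W)$, the cost of the $[t,T]$-problem started from regime $j$), which you yourself flag as the delicate point.
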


\section{Open-loop solvability}\label{section-3}
This section discusses the open-loop solvability of the Problem (M-ZLQ). For simplicity, we assume that notations defined in \eqref{natation-1} are always valid in the rest of the paper.
Now, we introduce the definition of open-loop solvability for Problem (M-ZLQ).
\begin{definition}\label{def-ZLQ-open-loop-equilibrium}
A pair $(u_{1}^{*},u_{2}^{*})\in\mathcal{U}_{1}\times \mathcal{U}_{2}$ is called an open-loop saddle point of Problem (M-ZLQ) for the initial value $(x,i)\in\mathbb{R}^{n}\times\mathcal{S}$ if
\begin{equation}\label{ZLQ-open-loop-Nash-equilibrium-point}
    \begin{aligned}
    J\left(x,i;u_{1}^{*},u_{2}\right)\leq J\left(x,i;u_{1}^{*},u_{2}^{*}\right)\leq J\left(x,i;u_{1},u_{2}^{*}\right),\quad \forall \left(u_{1},u_{2}\right)\in \mathcal{U}_{1}\times \mathcal{U}_{2}.
    \end{aligned}
\end{equation}
Problem (M-ZLQ) is said to be (uniquely) open-loop solvable at $(x,i)$ if it admits a (unique)  open-loop saddle point $(u_{1}^{*},u_{2}^{*})\in \mathcal{U}_{1}\times \mathcal{U}_{2}$. In addition, Problem (M-ZLQ) is said to be (uniquely) open-loop solvable if it is  (uniquely) open-loop solvable at all $(x,i)\in\mathbb{R}^{n}\times\mathcal{S}$.
\end{definition}

The following result characterizes the open-loop solvability of the Problem (M-ZLQ) in terms of FBSDEs.

\begin{theorem}\label{thm-ZLQ-open-loop}
Let assumptions (A1)-(A2) hold. Then $u^{*}(\cdot)\equiv(u_{1}^{*}(\cdot)^{\top},u_{2}^{*}(\cdot)^{\top})^{\top}\in\mathcal{U}$ is an open-loop saddle point of Problem (M-ZLQ) for initial value $(x,i)\in\mathbb{R}^{n}\times\mathcal{S}$ if and only if:
 \begin{description}
    \item[(i)] The convexity-concavity condition \eqref{convex-concave-condition} holds;
 \item[(ii)] The adapted solution $\left(X^{*},Y^{*},Z^{*},\mathbf{\Gamma}^{*}\right)\in \mathcal{S}_{\mathbb{F}}^{2}(0,T;\mathbb{R}^{n})\times \mathcal{S}_{\mathbb{F}}^{2}(0,T;\mathbb{R}^{n})\times L_{\mathbb{F}}^{2}(0,T;\mathbb{R}^{n})\times\mathcal{D}\left(L_{\mathcal{P}}^{2}(0,T;\mathbb{R}^{n})\right)$ to the  FBSDEs \eqref{SLQ-FBSDE} satisfies the  stationary condition \eqref{SLQ-stationary-condition}.
   %\item[(ii)] The adapted solution $\left(X^{*},Y^{*},Z^{*},\mathbf{\Gamma}^{*}\right)\in \mathcal{S}_{\mathbb{F}}^{2}(\mathbb{R}^{n})\times \mathcal{S}_{\mathbb{F}}^{2}(\mathbb{R}^{n})\times L_{\mathbb{F}}^{2}(\mathbb{R}^{n})\times\mathcal{D}\left(L_{\mathcal{P}}^{2}(\mathbb{R}^{n})\right)$ to the FBSDE
%  \begin{equation}\label{ZLQ-FBSDEs}
%  \left\{
%      \begin{aligned}
%      dX^{*}(t)&=\left[A(t,\alpha(t))X^{*}(t)+B(t,\alpha(t))u^{*}(t)+b(t)\right]dt\\
%      &\quad+\left[C(t,\alpha(t))X^{*}(t)+D(t,\alpha(t))u^{*}(t)+\sigma(t)\right]dW(t),\\
%      dY^{*}(t)&=-\big[A(t,\alpha(t))^{\top}Y^{*}(t)+C(t,\alpha(t))^{\top}Z^{*}(t)+Q(t,\alpha(t))X^{*}(t)\\
%      &\quad+S(t,\alpha(t))^{\top}u^{*}(t)+q(t)\big]dt+Z^{*}(t)dW(t)+\mathbf{\Gamma}^{*}(t)\cdot d\mathbf{\widetilde{N}}(t),\quad t\in [0,T],\\
%      X^{*}(0)&=x,\quad\alpha_{0}=i, \quad Y^{*}(T)=M(\alpha(T))X^{*}(T)+m,
%      \end{aligned}
%      \right.
%  \end{equation}
%  satisfies the following stationary condition:
%  \begin{equation}\label{ZLQ-stationary-condition}
%     B(t,\alpha(t))^{\top}Y^{*}(t)+ D(t,\alpha(t))^{\top}Z^{*}(t)+S(t,\alpha(t))X^{*}(t)+R(t,\alpha(t))u^{*}(t)+\rho(t)=0,\quad a.e.\quad a.s..
%  \end{equation}
 \end{description}
\end{theorem}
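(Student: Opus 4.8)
The plan is to exploit the ``decomposition method'' advertised in the introduction: the two-sided inequality \eqref{ZLQ-open-loop-Nash-equilibrium-point} defining a saddle point is equivalent to the conjunction of two one-sided optimality statements, each of which is an open-loop optimal control problem already characterized by Lemma \ref{thm-SLQ-open}. First I would observe that freezing the second player's control at $u_2^*$ and viewing $u_1\mapsto J(x,i;u_1,u_2^*)$ as the cost of a single-controller problem turns the right-hand inequality of \eqref{ZLQ-open-loop-Nash-equilibrium-point} into the statement that $u_1^*$ is an open-loop optimal control of a Problem (M-SLQ); symmetrically, freezing $u_1=u_1^*$ and maximizing over $u_2$ turns the left-hand inequality into the statement that $u_2^*$ is an open-loop optimal control of a Problem ($\overline{\text{M-SLQ}}$). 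The coefficients of these two auxiliary problems are read off from \eqref{state} and \eqref{performance-functional}: for Player $1$ the control coefficients are $B_1,D_1$, the control weight is $R_{11}$, the cross weight is $S_1$, while $B_2u_2^*$ (resp. $D_2u_2^*$) is absorbed into the inhomogeneous drift $b$ (resp. diffusion $\sigma$) and $R_{12}u_2^*+\rho_1$ plays the role of the control-linear weight; the analogous bookkeeping applies to Player $2$.

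Next I would apply Lemma \ref{thm-SLQ-open} to each auxiliary problem. The crucial point is that the homogeneous functional governing the convexity/concavity condition of each auxiliary problem is unchanged by freezing the other player's control, since the frozen control enters only through affine (inhomogeneous) terms: the pure quadratic part in $u_1$ of the first auxiliary problem is exactly $J^{0}(0,i;u_1,0)$, and that in $u_2$ of the second is exactly $J^{0}(0,i;0,u_2)$. Hence the convexity condition for Player $1$'s problem is precisely the first line of \eqref{convex-concave-condition} and the concavity condition for Player $2$'s problem is precisely the second line, so that item (i) is equivalent to the conjunction of the two scalar conditions furnished by Lemma \ref{thm-SLQ-open}.

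For item (ii) I would verify that the two FBSDE characterizations produced by Lemma \ref{thm-SLQ-open} collapse to the single system \eqref{SLQ-FBSDE}. The forward equation coincides in both auxiliary problems because $Bu^*=B_1u_1^*+B_2u_2^*$ and $Du^*=D_1u_1^*+D_2u_2^*$; more importantly, the backward (adjoint) equation is common to both, since its generator is obtained by differentiating the \emph{same} quadratic cost with respect to the state, giving the source $QX^*+S^\top u^*+q=QX^*+S_1^\top u_1^*+S_2^\top u_2^*+q$ and the terminal value $G(\alpha(T))X^*(T)+g$ in either case. Consequently both auxiliary problems refer to one and the same adapted solution $(X^*,Y^*,Z^*,\mathbf{\Gamma}^*)$. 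The stationary conditions, however, split: Lemma \ref{thm-SLQ-open} applied to Player $1$ yields $B_1^\top Y^*+D_1^\top Z^*+S_1X^*+R_{11}u_1^*+R_{12}u_2^*+\rho_1=0$, and applied to Player $2$ yields $B_2^\top Y^*+D_2^\top Z^*+S_2X^*+R_{21}u_1^*+R_{22}u_2^*+\rho_2=0$; stacking these two block rows and using the notation \eqref{natation-1} reproduces exactly \eqref{SLQ-stationary-condition}. Reading the argument in both directions then delivers the ``if and only if''.

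I expect the main obstacle to be the careful bookkeeping in the second paragraph, namely proving that freezing one player's optimal control leaves the relevant homogeneous cost of the other player's auxiliary problem equal to the corresponding one-sided restriction of $J^{0}$, so that the two necessary convexity/concavity conditions assemble into precisely Condition (CC) rather than something stronger. Everything else is a matter of matching coefficients and invoking Lemma \ref{thm-SLQ-open} twice; the fact that Lemma \ref{thm-SLQ-open} attaches the \emph{same} FBSDE \eqref{SLQ-FBSDE} to both Problem (M-SLQ) and Problem ($\overline{\text{M-SLQ}}$) is what makes the two adjoint systems coincide and lets the single stationary condition \eqref{SLQ-stationary-condition} encode both first-order conditions simultaneously.
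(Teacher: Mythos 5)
Your proposal is correct and follows essentially the same route as the paper: the paper likewise freezes each player's opponent control to define two coupled auxiliary problems ($\overline{\text{M-SLQ}}$)$_{1}$ (minimization) and ($\overline{\text{M-SLQ}}$)$_{2}$ (maximization), applies Lemma \ref{thm-SLQ-open} to each, observes that the frozen control enters only through inhomogeneous terms so the homogeneous costs reduce to $J^{0}(0,i;u_{1},0)$ and $J^{0}(0,i;0,u_{2})$, and identifies the stacked stationary conditions with \eqref{SLQ-stationary-condition}. Your explicit verification that the two adjoint BSDEs coincide and that the two block stationary conditions assemble into the single condition \eqref{SLQ-stationary-condition} is in fact slightly more detailed than the paper's own (rather terse) treatment of that step.
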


\begin{proof}
In the rest of this paper, we denote the solution to \eqref{state} as $X(\cdot; x,i,u_{1},u_{2})$ and as $X^{0}(\cdot; x,i,u_{1},u_{2})$ if $b(\cdot)=\sigma(\cdot)=0$.
  For  given $(u_{1}^{*},u_{2}^{*})\in\mathcal{U}_{1}\times \mathcal{U}_{2}$, let $X_{1}\equiv X(\cdot;x,i,u_{1},u_{2}^{*})$, $X_{2}\equiv X(\cdot;x,i,u_{1}^{*},u_{2})$,
\begin{align*}
\bar{J}_{1}(x,i;u_{1})& \triangleq \mathbb{E}\Big\{\int_{0}^{T}\left[\left<\left(\begin{matrix}
Q(\alpha) & S_{1}(\alpha)^{\top}\\
S_{1}(\alpha) & R_{11}(\alpha)\end{matrix}\right)\left(\begin{matrix}
X_{1}\\u_{1}\end{matrix}\right),
\left(\begin{matrix}
X_{1}\\u_{1}\end{matrix}\right)\right>
+2\left<\left(\begin{matrix}
q+S_{2}(\alpha)^{\top}u_{2}^{*}\\
\rho_{1}+R_{12}(\alpha)u_{2}^{*}\end{matrix}\right),
\left(\begin{matrix}
X_{1}\\u_{1}\end{matrix}\right)\right>\right]dt\\
&\quad +\big<G(T,\alpha(T))X_{1}(T)+2g, X_{1}(T)\big>\Big\},
  \end{align*}
and
\begin{align*}
\bar{J}_{2}(x,i;u_{1})& \triangleq \mathbb{E}\Big\{\int_{0}^{T}\left[\left<\left(\begin{matrix}
Q(\alpha) & S_{2}(\alpha)^{\top}\\
S_{2}(\alpha) & R_{22}(\alpha)\end{matrix}\right)\left(\begin{matrix}
X_{2}\\u_{2}\end{matrix}\right),
\left(\begin{matrix}
X_{2}\\u_{2}\end{matrix}\right)\right>
+2\left<\left(\begin{matrix}
q+S_{1}(\alpha)^{\top}u_{1}^{*}\\
\rho_{2}+R_{21}(\alpha)u_{1}^{*}\end{matrix}\right),
\left(\begin{matrix}
X_{2}\\u_{2}\end{matrix}\right)\right>\right]dt\\
&\quad +\big<G(T,\alpha(T))X_{2}(T)+2g, X_{2}(T)\big>\Big\}.
  \end{align*}
  Then
  \begin{align*}
& J(x,i;u_{1},u_{2}^{*}) =\mathbb{E}\int_{0}^{T}\big<R_{22}(\alpha)u_{2}^{*}+2\rho_{2},u_{2}^{*}\big>dt +\bar{J}_{1}(x,i;u_{1}),\\
& J(x,i;u_{1}^{*},u_{2}) =\mathbb{E}\int_{0}^{T}\big<R_{11}(\alpha)u_{1}^{*}+2\rho_{1},u_{1}^{*}\big>dt +\bar{J}_{2}(x,i;u_{2}).
  \end{align*}
Now, we define the following two coupled SLQ control problems.\\
\noindent\textbf{Problem ($\overline{\text{M-SLQ}}$)$_{1}$:} For any given $(x,i)\in \mathbb{R}^{n}\times \mathcal{S}$, find a $u_{1}^{*}\in \mathcal{U}_{1}$ such that
\begin{equation}\label{J-1}
\bar{J}_{1}\left(x,i;u_{k}^{*}\right)=\inf_{u_{1}\in \mathcal{U}_{1}}\bar{J}_{1}\left(x,i;u_{1}\right).
\end{equation}
\noindent\textbf{Problem ($\overline{\text{M-SLQ}}$)$_{2}$:} For any given $(x,i)\in \mathbb{R}^{n}\times \mathcal{S}$, find a $u_{2}^{*}\in \mathcal{U}_{2}$ such that
\begin{equation}\label{J-2}
\bar{J}_{2}\left(x,i;u_{2}^{*}\right)=\sup_{u_{2}\in \mathcal{U}_{2}}\bar{J}_{2}\left(x,i;u_{2}\right).
\end{equation}
Any $u_{1}^{*}(\cdot)\in\mathcal{U}_{1}$ (respectively, $u_{2}^{*}(\cdot)\in\mathcal{U}_{2}$) satisfying the equation \eqref{J-1} (respectively, equation \eqref{J-2})
 is called an open-loop optimal control for Problem  ($\overline{\text{M-SLQ}}$)$_{1}$ (respectively,  ($\overline{\text{M-SLQ}}$)$_{2}$).

For any initial value $(x,i)\in\mathbb{R}^{n}\times\mathcal{S}$,  a pair $(u_{1}^{*},u_{2}^{*})$ is an open-loop saddle point of Problem (M-ZLQ) if and only if $u_{k}^{*}$ is an open-loop optimal control of Problem  ($\overline{\text{M-SLQ}}$)$_{k}$, which is further equivalent to the following conditions by Lemma \ref{thm-SLQ-open} and notation \eqref{natation-1}:
%It is clear that $(u_{1}^{*},u_{2}^{*})$ is an open-loop saddle point of Problem (M-ZLQ) for initial value $(x,i)\in\mathbb{R}^{n}\times\mathcal{S}$ if and only if $u_{k}^{*}$ is an open-loop optimal control of Problem  ($\overline{\text{M-SLQ}}$)$_{k}$ for initial value $(x,i)\in\mathbb{R}^{n}\times\mathcal{S}$, $k=1,2$. Consequently, by Lemma \ref{thm-SLQ-open} and notation \eqref{natation-1}, we  obtain that a pair $(u_{1}^{*},u_{2}^{*})$ is an open-loop saddle point of Problem (M-ZLQ) for initial value $(x,i)\in\mathbb{R}^{n}\times\mathcal{S}$ if and only if
\begin{description}
  \item[(i)] the following convexity-concavity condition holds
  \begin{equation}\label{convex-concave}
    \left\{
    \begin{aligned}
      &\bar{J}_{1}^{0}(0,i;u_{1})\geq 0,\quad \forall u_{1}(\cdot)\in\mathcal{U}_{1},\quad i\in\mathcal{S},\\
      &\bar{J}_{2}^{0}(0,i;u_{2})\leq 0,\quad \forall u_{2}(\cdot)\in\mathcal{U}_{2},\quad i\in\mathcal{S};
    \end{aligned}
    \right.
  \end{equation}
  \item[(ii)] the FBSDEs \eqref{SLQ-FBSDE} admits a solution such that stationary condition \eqref{SLQ-stationary-condition}.
\end{description}

Let $X_{1}^{0}=X^{0}(\cdot;0,i,u_{1},0)$ and $X_{2}^{0}=X^{0}(\cdot;0,i,0,u_{2})$. Then
 \begin{align*}
\bar{J}_{k}^{0}(0,i;u_{k})&\triangleq \mathbb{E}\Big\{\int_{0}^{T}\left<\left(\begin{matrix}
Q(\alpha) & S_{k}(\alpha)^{\top}\\
S_{k}(\alpha) & R_{kk}(\alpha)\end{matrix}\right)\left(\begin{matrix}
X_{k}^{0}\\u_{k}\end{matrix}\right),
\left(\begin{matrix}
X_{k}^{0}\\u_{k}\end{matrix}\right)\right>dt
+\big<G(T,\alpha(T))X_{k}^{0}(T), X_{k}^{0}(T)\big>\Big\}\\
&=J^{0}(0,i;u_{k},0),\quad k=1,2,
  \end{align*}
 which implies that the condition \eqref{convex-concave} is equivalent to \eqref{convex-concave-condition}. This completes the proof.
\end{proof}

In the above, the FBSDEs \eqref{SLQ-FBSDE} together with the stationary condition \eqref{SLQ-stationary-condition} constitute the optimality system of Problem (M-ZLQ). Although Theorem \ref{thm-ZLQ-open-loop} has characterized the open-loop solvability of Problem (M-ZLQ) through the solvability of the optimality system and the concavity-convexity of the performance functional, further decoupling of the optimality system is required to obtain the closed-loop representation of the open-loop saddle point. The following result establishes a relation between the linear FBSDEs and the CDREs.
\begin{theorem}\label{thm-open-closed}
Let assumptions (A1)-(A2) hold. If %the following conditions hold:
\begin{description}
  \item[(i)] the convexity-concavity condition \eqref{convex-concave-condition} holds,
  \item[(ii)]  the CDREs \eqref{CDREs} admits a solution $\mathbf{P(\cdot)}\in\mathcal{D}\left(C(0, T; \mathbb{S}^{n}) \right)$ such that
  \begin{equation}\label{ZLQ-CDREs-open-closed-condition}
  \left\{
  \begin{array}{l}
  \mathcal{R}(\mathcal{L}(t;P,i)^{\top})\subseteq \mathcal{R}(\mathcal{N}(t;P,i)),\quad a.e. \text{ }t\in[0,T],\\[3mm]
      \mathcal{N}(\cdot;P,i)^{\dag}\mathcal{L}(\cdot;P,i)^{\top}\in L^{2}(0,T;\mathbb{R}^{m\times n}),\quad  \forall i\in\mathcal{S},
      \end{array}
      \right.
  \end{equation}
  \item[(iii)]  the adapted solution $(\eta(\cdot),\zeta(\cdot),\mathbf{z}(\cdot))$ to the BSDE \eqref{eta}
 satisfies the constraint \eqref{eta-constraint},
\end{description}
then for any $(x,i)\in\mathbb{R}^{n}\times\mathcal{S}$, the optimality system \eqref{SLQ-FBSDE}-\eqref{SLQ-stationary-condition} admits a solution. In addition, the Problem (M-ZLQ) is open-loop solvable with closed-loop representation:
\begin{equation}\label{open-closed-representation}
  u^{*}(\cdot;x,i)=[u_{1}^{*}(\cdot;x,i)^{\top},u_{2}^{*}(\cdot;x,i)^{\top}]^{\top}=\Theta^{*}(\cdot,\alpha(\cdot))X^{*}(\cdot;x,i)+\nu^{*}(\cdot),
\end{equation}
where
\begin{equation}\label{ZLQ-open-closed-representation}
 \left\{
 \begin{aligned}
 &\Theta^{*}(\cdot,i)\triangleq-\mathcal{N}(\cdot;P,i)^{\dag}\mathcal{L}(\cdot;P,i)^{\top},\quad i\in\mathcal{S},\\
 &\nu^{*}(\cdot)\triangleq -\mathcal{N}(\cdot;P,i)^{\dag} \widetilde{\rho},
 \end{aligned}
 \right.
\end{equation}
and $X^{*}(\cdot;x,i)\equiv X(\cdot;x,i,\mathbf{\Theta}^{*},\nu^{*})$ defined in \eqref{state-LQ-closed}.
%\begin{equation}\label{state-star-closed}
%  \left\{
%  \begin{aligned}
%  dX^{*}(t)&=\Big\{\big[A(t,\alpha(t))+B(t,\alpha(t))\Theta^{*}(t,\alpha(t))\big]X^{*}(t)+B(t,\alpha(t))\nu^{*}(t)+b(t)\Big\}dt\\
%  &\quad +\Big\{\big[C(t,\alpha(t))+D(t,\alpha(t))\Theta^{*}(t,\alpha(t))\big]X^{*}(t)+D(t,\alpha(t))\nu^{*}(t)+\sigma(t)\Big\}dW(t),\quad t\in[0,T],\\
%  X^{*}(0)&=x,\quad \alpha(0)=i.
%  \end{aligned}
%  \right.
%\end{equation}
\end{theorem}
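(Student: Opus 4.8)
The plan is to build an explicit adapted solution of the optimality system \eqref{SLQ-FBSDE}--\eqref{SLQ-stationary-condition} out of the data $\mathbf{P}(\cdot)$ and $(\eta,\zeta,\mathbf{z})$ by a decoupling ansatz, and then to invoke Theorem \ref{thm-ZLQ-open-loop}. First I would fix $(x,i)\in\mathbb{R}^n\times\mathcal{S}$, define $\Theta^*,\nu^*$ exactly as in \eqref{ZLQ-open-closed-representation}, solve the closed-loop state equation \eqref{state-LQ-closed} to obtain $X^*\equiv X(\cdot;x,i,\mathbf{\Theta}^*,\nu^*)$, and set $u^*=\Theta^*(\cdot,\alpha)X^*+\nu^*$. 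The second line of \eqref{ZLQ-CDREs-open-closed-condition} gives $\Theta^*(\cdot,i)\in L^2$ and the second line of \eqref{eta-constraint} gives $\nu^*\in L_{\mathbb{F}}^2$, so $u^*\in\mathcal{U}$ and $X^*\in\mathcal{S}_{\mathbb{F}}^2$ by standard SDE estimates. The central guess for the backward part is
\begin{equation*}
Y^*(t)=P(t,\alpha(t))X^*(t)+\eta(t),\qquad Z^*(t)=P(t,\alpha(t))\big[CX^*+Du^*+\sigma\big]+\zeta(t),
\end{equation*}
together with $\Gamma^*_j=[P(t,j)-P(t,\alpha(t))]X^*+z_j$; continuity (hence boundedness) of $\mathbf{P}(\cdot)$ then places $(Y^*,Z^*,\mathbf{\Gamma}^*)$ in the spaces required by \eqref{SLQ-FBSDE}.

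The main work is verifying the backward equation. I would apply the generalized It\^o formula to $P(t,\alpha(t))X^*(t)$, noting that $X^*$ carries no $\widetilde{\mathbf{N}}$-driven jump so that the only jump contribution is $\sum_j[P(t,j)-P(t,\alpha)]X^*\,d\widetilde N_j$ plus the compensator $\sum_j\pi_{\alpha j}[P(t,j)-P(t,\alpha)]X^*\,dt$, and then add $d\eta$ from \eqref{eta}. Splitting the resulting drift of $Y^*$ into a part proportional to $X^*$ and an inhomogeneous part, and using $\sum_j\pi_{\alpha j}=0$ together with the notation \eqref{notation-LMN}, the $X^*$-coefficient collapses to $\dot P+\mathcal{M}+\mathcal{L}\Theta^*$; inserting $\Theta^*=-\mathcal{N}^{\dag}\mathcal{L}^{\top}$ turns this into $\dot P+\mathcal{M}-\mathcal{L}\mathcal{N}^{\dag}\mathcal{L}^{\top}$, which is zero by the CDRE \eqref{CDREs}. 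In the inhomogeneous part the two $Pb$-terms cancel, and after regrouping with $\mathcal{L}=PB+C^{\top}PD+S^{\top}$ and the definition of $\widetilde\rho$ in \eqref{SLQ-rho} the residual equals $\mathcal{L}\big(\nu^*+\mathcal{N}^{\dag}\widetilde\rho\big)$, which vanishes since $\nu^*=-\mathcal{N}^{\dag}\widetilde\rho$. Thus the drift of $Y^*$ matches $-[A^{\top}Y^*+C^{\top}Z^*+QX^*+S^{\top}u^*+q]$, reading off the $dW$- and $d\widetilde{\mathbf{N}}$-parts confirms the stated $Z^*,\mathbf{\Gamma}^*$, and the terminal condition $Y^*(T)=G(\alpha(T))X^*(T)+g$ follows from $P(T,i)=G(i)$ and $\eta(T)=g$. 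Hence $(X^*,Y^*,Z^*,\mathbf{\Gamma}^*)$ solves \eqref{SLQ-FBSDE}.

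Next I would check the stationary condition \eqref{SLQ-stationary-condition}. Substituting the ansatz into $B^{\top}Y^*+D^{\top}Z^*+SX^*+Ru^*+\rho$ and collecting terms, the coefficient of $X^*$ is $\mathcal{L}^{\top}+\mathcal{N}\Theta^*=\mathcal{L}^{\top}-\mathcal{N}\mathcal{N}^{\dag}\mathcal{L}^{\top}$, which vanishes because the range inclusion $\mathcal{R}(\mathcal{L}^{\top})\subseteq\mathcal{R}(\mathcal{N})$ in \eqref{ZLQ-CDREs-open-closed-condition} yields $\mathcal{N}\mathcal{N}^{\dag}\mathcal{L}^{\top}=\mathcal{L}^{\top}$ via Lemma \ref{lem-Pseudoinverse}. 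The surviving inhomogeneous term is $\widetilde\rho+\mathcal{N}\nu^*=\widetilde\rho-\mathcal{N}\mathcal{N}^{\dag}\widetilde\rho$, which vanishes by the inclusion $\widetilde\rho\in\mathcal{R}(\mathcal{N})$ from \eqref{eta-constraint}. Therefore \eqref{SLQ-stationary-condition} holds.

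Finally, since hypothesis (i) is exactly the convexity-concavity condition \eqref{convex-concave-condition} and $(X^*,Y^*,Z^*,\mathbf{\Gamma}^*)$ with control $u^*$ solves \eqref{SLQ-FBSDE}--\eqref{SLQ-stationary-condition}, Theorem \ref{thm-ZLQ-open-loop} shows $u^*$ is an open-loop saddle point at $(x,i)$; as $(x,i)$ was arbitrary, Problem (M-ZLQ) is open-loop solvable, and by construction $u^*$ admits the closed-loop representation \eqref{open-closed-representation}. The hard part will be the It\^o expansion in the regime-switching setting: keeping the compensated jump term $\sum_j\pi_{\alpha j}[P(t,j)-P(t,\alpha)]X^*$ and ensuring it is absorbed exactly into $\mathcal{M}$, together with the bookkeeping that forces both the $X^*$-coefficient and the inhomogeneous residual to vanish simultaneously through the CDRE and the definitions of $\Theta^*,\nu^*$.
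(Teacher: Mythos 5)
Your proposal is correct and follows essentially the same route as the paper's own proof: the same decoupling ansatz $Y^{*}=P(\cdot,\alpha)X^{*}+\eta$, $Z^{*}=P(\cdot,\alpha)[CX^{*}+Du^{*}+\sigma]+\zeta$, $\Gamma_{j}^{*}=[P(\cdot,j)-P(\cdot,\alpha)]X^{*}+z_{j}$, the same It\^o verification in which the CDRE kills the $X^{*}$-coefficient and $\nu^{*}=-\mathcal{N}^{\dag}\widetilde{\rho}$ kills the inhomogeneous residual, and the same final appeal to Theorem \ref{thm-ZLQ-open-loop}. The only differences are organizational (you check the backward equation before the stationary condition, and you split the range argument into the two inclusions $\mathcal{R}(\mathcal{L}^{\top})\subseteq\mathcal{R}(\mathcal{N})$ and $\widetilde{\rho}\in\mathcal{R}(\mathcal{N})$ rather than the paper's single combined inclusion $\mathcal{L}^{\top}X^{*}+\widetilde{\rho}\in\mathcal{R}(\mathcal{N})$), which are immaterial.
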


\begin{proof}
 Let $\mathbf{P}(\cdot)$  be the solution to CDREs \eqref{CDREs} satisfying the condition \eqref{ZLQ-CDREs-open-closed-condition} and $(\eta(\cdot),\zeta(\cdot),\mathbf{z}(\cdot))$ be the solution to BSDE \eqref{eta} satisfying the condition \eqref{eta-constraint}. Now, for any fixed $(x,i)\in\mathbb{R}^{n}\times\mathcal{S}$, let
 \begin{equation}\label{Y-solution}
   \left\{
   \begin{aligned}
   &Y^{*}(t)\triangleq P(t,\alpha(t))X^{*}(t)+\eta(t),\\
   &Z^{*}(t)\triangleq P(t,\alpha(t))\big[C(t,\alpha(t))X^{*}(t)+D(t,\alpha(t))u^{*}(t)+\sigma(t)\big]+\zeta(t),\\
   &\Gamma_{j}^{*}(t)\triangleq \big[P(t,j)-P(t,\alpha(t-))\big]X^{*}(t)+z_{j}(t),\quad t\in [0,T],\quad j\in\mathcal{S}.
   \end{aligned}
   \right.
 \end{equation}
We only need to verify that $(X^{*}, Y^{*}, Z^{*}, \mathbf{\Gamma}^{*},u^{*})$ defined above solves the optimality system \eqref{SLQ-FBSDE}-\eqref{SLQ-stationary-condition}. The desired result follows from Theorem \ref{thm-ZLQ-open-loop} directly.

Noting that $\mathcal{L}(P,\alpha)^{\top}X^{*}+\widetilde{\rho}\in\mathcal{R}(\mathcal{N}(P,\alpha))$,
which implies that
\begin{equation}
    \begin{aligned}
    \mathcal{N}(P,\alpha)u^{*}&=-\mathcal{N}(P,\alpha)\mathcal{N}(P,\alpha)^{\dag}\mathcal{L}(P,\alpha)^{\top}X^{*}-\mathcal{N}(P,\alpha)\mathcal{N}(P,\alpha)^{\dag}\widetilde{\rho}\\
    &=\big[I-\mathcal{N}(P,\alpha)\mathcal{N}(P,\alpha)^{\dag}\big]\big[\mathcal{L}(P,\alpha)^{\top}X^{*}+\widetilde{\rho}\big]-\mathcal{L}(P,\alpha)^{\top}X^{*}-\widetilde{\rho}\\
    &=-\mathcal{L}(P,\alpha)^{\top}X^{*}-\widetilde{\rho}.
    \end{aligned}
\end{equation}
Consequently,
\begin{align*}
  &\quad B(\alpha)^{\top}Y^{*}+ D(\alpha)^{\top}Z^{*}+S(\alpha)X^{*}+R(\alpha)u^{*}+\rho\\
  &=B(\alpha)^{\top}(P(\alpha)X^{*}+\eta)+ D(\alpha)^{\top}\big[P(\alpha)\big(C(\alpha)X^{*}+D(\alpha)u^{*}+\sigma\big)+\zeta\big]+S(\alpha)X^{*}+R(\alpha)u^{*}+\rho\\
  &=\mathcal{N}(P,\alpha)u^{*}+\mathcal{L}(P,\alpha)^{\top}X^{*}+\widetilde{\rho}\\
  &=0.
\end{align*}

On the other hand, it follows from \eqref{open-closed-representation} that $X^{*}(\cdot)$ also solves the following SDE:
\begin{equation}
  \left\{
      \begin{aligned}
      dX^{*}(t)&=\left[A(t,\alpha(t))X^{*}(t)+B(t,\alpha(t))u^{*}(t)+b(t)\right]dt\\
      &\quad+\left[C(t,\alpha(t))X^{*}(t)+D(t,\alpha(t))u^{*}(t)+\sigma(t)\right]dW(t),\quad t\in [0,T],\\
      X^{*}(0)&=x,\quad\alpha_{0}=i.
      \end{aligned}
      \right.
\end{equation}

Let
\begin{align*}
\Lambda &\triangleq -\big\{\big[A(\alpha)^{\top}-\mathcal{L}(P,\alpha)\mathcal{N}(P,\alpha)^{\dag}B(\alpha)^{\top}\big]\eta+\big[C(\alpha)^{\top}-\mathcal{L}(P,\alpha)\mathcal{N}(P,\alpha)^{\dag}D(\alpha)^{\top}\big]\zeta\\
        &\quad+\big[C(\alpha)^{\top}-\mathcal{L}(P,\alpha)\mathcal{N}(P,\alpha)^{\dag}D(\alpha)^{\top}\big]P(\alpha)\sigma-\mathcal{L}(P,\alpha)\mathcal{N}(P,\alpha)^{\dag}\rho+P(\alpha)b+q\big\}.
\end{align*}
Then applying It\^o's rule to $P(\alpha)X^{*}+\eta$, one has
\begin{align*}
  dY^{*}&=\big\{P(\alpha)\big[A(\alpha)X^{*}+B(\alpha)u^{*}+b\big]+\dot{P}(\alpha)X^{*}+\sum_{j=1}^{L}\pi_{\alpha_{t-},j}P(j)X^{*}+\Lambda\big\}dt+Z^{*}(t)dW(t)+\Gamma^{*}(t)\cdot d\widetilde{N}(t) \\
  &=\big\{\big[\mathcal{L}(P,\alpha) \mathcal{N}(P,i)^{\dag} \mathcal{L}(P,\alpha)^{\top}-A(\alpha)^{\top}P(\alpha)-C(\alpha)^{\top}P(\alpha)C(\alpha)-Q(\alpha)\big]X^{*}+P(\alpha)B(\alpha)u^{*}\\
  &\quad +P(\alpha)b+\Lambda\big\}dt+Z^{*}(t)dW(t)+\Gamma^{*}(t)\cdot d\widetilde{N}(t) \\
  &=\big\{\mathcal{L}(P,i) \mathcal{N}(P,i)^{\dag} \mathcal{L}(P,i)^{\top}X^{*}-A(\alpha)^{\top}\big(Y^{*}-\eta\big)-C(\alpha)^{\top}\big[Z^{*}-P(\alpha)D(\alpha)u^{*}-P(\alpha)\sigma-\zeta\big]\\
 &\quad -Q(\alpha)X^{*}+P(\alpha)B(\alpha)u^{*} +P(\alpha)b+\Lambda\big\}dt+Z^{*}(t)dW(t)+\Gamma^{*}(t)\cdot d\widetilde{N}(t) \\
 &=-\big\{A(\alpha)^{\top}Y^{*}+C(\alpha)^{\top}Z^{*}+Q(\alpha)X^{*}+S(\alpha)^{\top}u^{*}
 -\mathcal{L}(P,i)\big[u^{*}+\mathcal{N}(P,i)^{\dag}\mathcal{L}(P,i)^{\top}X^{*}\big]\\
 &\quad -\big[A(\alpha)^{\top}\eta+C(\alpha)^{\top}\zeta+C(\alpha)^{\top}P(\alpha)\sigma+P(\alpha)b+\Lambda\big]\big\}dt+Z^{*}(t)dW(t)+\Gamma^{*}(t)\cdot d\widetilde{N}(t) \\
 &=-\big\{A(\alpha)^{\top}Y^{*}+C(\alpha)^{\top}Z^{*}+Q(\alpha)X^{*}+S(\alpha)^{\top}u^{*}+q\\
 &\quad-\mathcal{L}(P,\alpha)\big[u^{*}+\mathcal{N}(P,\alpha)^{\dag}\mathcal{L}(P,\alpha)^{\top}X^{*}
 +\mathcal{N}(P,\alpha)^{\dag}\widetilde{\rho}\big]\big\}dt+Z^{*}(t)dW(t)+\Gamma^{*}(t)\cdot d\widetilde{N}(t) \\
 &=-\big\{A(\alpha)^{\top}Y^{*}+C(\alpha)^{\top}Z^{*}+Q(\alpha)X^{*}+S(\alpha)^{\top}u^{*}+q\big]\big\}dt+Z^{*}(t)dW(t)+\Gamma^{*}(t)\cdot d\widetilde{N}(t).
\end{align*}
Combining with the fact that
$$Y^{*}(T)=P(T,\alpha(T))X^{*}(T)+\eta^{*}(T)=G(\alpha(T))X^{*}(T)+g,$$
we complete the proof.
\end{proof}

\section{Closed-loop solvability}\label{section-4}
We begin this section by letting:
$$\mathcal{V}_{cl}\triangleq \mathcal{D}\left(L^{2}(0,T;\mathbb{R}^{m_{1}\times n})\right)\times \mathcal{U}_{1}\times \mathcal{D}\left(L^{2}(0,T;\mathbb{R}^{m_{2}\times n})\right)\times \mathcal{U}_{2},$$
which represents the set of all closed-loop strategies of Problem (M-ZLQ). To simplify our further analysis, for given $\mathbf{P}(\cdot)\in\mathcal{D}\left(C(0,T;\mathbb{S}^{n})\right)$, we denote
\begin{equation}\label{ZLQ-closed-notations}
\begin{aligned}
    &\mathcal{L}_{k}(\cdot;P,i)\triangleq P(\cdot,i)B_{k}(\cdot,i)+C(\cdot,i)^{\top}P(\cdot,i)D_{k}(\cdot,i)+S_{k}(\cdot,i)^{\top},\\
    &\mathcal{N}_{kl}(\cdot;P,i)\triangleq
    D_{k}(\cdot,i)^{\top}P(\cdot,i)D_{l}(\cdot,i)+R_{kl}(\cdot,i),\quad k,l\in\{1,2\}.
    \end{aligned}
\end{equation}
Clearly, it follows from notations \eqref{natation-1} and \eqref{notation-LMN} that
\begin{equation}\label{ZLQ-notation-LN}
    \mathcal{L}(\cdot;P,i)=\big(\mathcal{L}_{1}(\cdot;P,i),\mathcal{L}_{2}(\cdot;P,i)\big),\quad
    \mathcal{N}(\cdot;P,i)=\left(\begin{matrix}
    \mathcal{N}_{11}(\cdot;P,i)&\mathcal{N}_{12}(\cdot;P,i)\\
    \mathcal{N}_{21}(\cdot;P,i)&\mathcal{N}_{22}(\cdot;P,i)
    \end{matrix}\right).
\end{equation}

For given
$\mathbf{\Theta}(\cdot)\equiv(\mathbf{\Theta}_{1}(\cdot)^{\top},\mathbf{\Theta}_{2}(\cdot)^{\top})\in\mathcal{D}(L^{2}(0,T;\mathbb{R}^{m\times n}))$ and $\nu(\cdot)\equiv (\nu_{1}(\cdot)^{\top},\nu_{2}(\cdot)^{\top})^{\top}\in L_{\mathbb{F}}^{2}(\mathbb{R}^{m})$, we denote  the solution to the SDE \eqref{state-LQ-closed}  as $X(\cdot;x,i,\mathbf{\Theta},\nu)$ and  as $X^{0}(\cdot;x,i,\mathbf{\Theta},\nu)$ if $b=\sigma=0$. Specifically, for given $u_{k}(\cdot)\in \mathcal{U}_{k}$, $\mathbf{\Theta}_{l}(\cdot)\in\mathcal{D}(L^{2}(0,T;\mathbb{R}^{m_{l}\times n}))$ and $\nu_{l}(\cdot)\in \mathcal{U}_{l}$, let $X(\cdot;x,i,u_{k},\mathbf{\Theta}_{l},\nu_{l})$ be the solution to the following  SDE:
$$
    \left\{
    \begin{aligned}
   dX(t)&=\left[\big(A(t,\alpha(t))+B_{l}(t,\alpha(t))\Theta_{l}(t,\alpha(t))\big)X(t)+B_{l}(t,\alpha(t))\nu_{l}(t)+B_{k}(t,\alpha(t))u_{k}(t)+b(t)\right]dt\\
   &\quad+\left[\big(C(t,\alpha(t))+D_{l}(t,\alpha(t))\Theta_{l}(t,\alpha(t))\big)X(t)+D_{l}(t,\alpha(t))\nu_{l}(t)+D_{k}(t,\alpha(t))u_{k}(t)+\sigma(t)\right]dW(t),\\
   X(0)&=x,\quad \alpha(0)=i, \quad (k,l)\in\{(1,2), (2,1)\}.
    \end{aligned}
    \right.
$$
If $b=\sigma=0$, we denote the  solution to above SDE as $X^{0}(\cdot;x,i,u_{k},\mathbf{\Theta}_{l},\nu_{l})$.
For $(k,l)\in\{(1,2), (2,1)\}$, we define
\begin{equation}\label{cost-functional-ZLQ}
\left\{
\begin{aligned}
    &J(x,i;u_{k},\mathbf{\Theta}_{l},\nu_{l})\triangleq J(x,i;u_{k},\Theta_{l}(\alpha)X+\nu_{l})\quad \text{ with }\quad X\equiv X(\cdot;x,i,u_{k},\mathbf{\Theta}_{l},\nu_{l}),\\
    &J(x,i;\mathbf{\Theta},\nu)\triangleq J(x,i;\Theta_{1}(\alpha)X+\nu_{1},\Theta_{2}(\alpha)X+\nu_{2})\quad  \text{ with }\quad X\equiv X(\cdot;x,i,\mathbf{\Theta},\nu),\\
    &J^{0}(x,i;u_{k},\mathbf{\Theta}_{l},\nu_{l})\triangleq J^{0}(x,i;u_{k},\Theta_{l}(\alpha)X^{0}+\nu_{l})\quad \text{ with }\quad X^{0}\equiv X^{0}(\cdot;x,i,u_{k},\mathbf{\Theta}_{l},\nu_{l}),\\
    &J^{0}(x,i;\mathbf{\Theta},\nu)\triangleq J^{0}(x,i;\Theta_{1}(\alpha)X^{0}+\nu_{1},\Theta_{2}(\alpha)X^{0}+\nu_{2})\quad  \text{ with }\quad X^{0}\equiv X^{0}(\cdot;x,i,\mathbf{\Theta},\nu).
\end{aligned}\right.
\end{equation}
Then, the closed-loop Nash equilibrium strategy of the Problem (M-ZLQ) can be defined based on the above notations.
\begin{definition}\label{def-ZLQ-closed-loop-equilibrium}
A 4-tuple $(\mathbf{\widehat{\Theta}_{1}}(\cdot),\widehat{\nu}_{1}(\cdot);\mathbf{\widehat{\Theta}_{2}}(\cdot),
\widehat{\nu}_{2}(\cdot))\in\mathcal{V}_{cl}$
is called a closed-loop Nash equilibrium strategy of Problem (M-ZLQ) if for any $(x,i)\in\mathbb{R}^{n}\times\mathcal{S}$, the following holds:
  \begin{equation}\label{closed-loop-Nash-equilibrium-point}
J(x,i;\mathbf{\widehat{\Theta}}_{1},\widehat{\nu}_{1},u_{2})\leq J(x,i;\mathbf{\widehat{\Theta}},\widehat{\nu})\leq J(x,i;u_{1},\mathbf{\widehat{\Theta}}_{2},\widehat{\nu}_{2}),\quad
\forall (u_{1},u_{2})\in\mathcal{U}_{1}\times \mathcal{U}_{2},
  \end{equation}
where
$$
\widehat{\Theta}(\cdot,i)\triangleq (\widehat{\Theta}_{1}(\cdot,i)^{\top},\widehat{\Theta}_{2}(\cdot,i)^{\top})^{\top},\quad
\widehat{\nu}(\cdot)\triangleq (\widehat{\nu}_{1}(\cdot)^{\top},\widehat{\nu}_{2}(\cdot)^{\top})^{\top},\quad  i\in\mathcal{S}.
$$
 The problem (M-ZLQ) is said to be (uniquely) closed-loop solvable if it admits a (unique) closed-loop Nash equilibrium strategy.
In this case, we denote
\begin{equation}\label{ZLQ-outcome-closed}
\left\{
\begin{aligned}
&\widehat{u}_{1}(\cdot;x,i,\mathbf{\widehat{\Theta}},\widehat{\nu})\triangleq \widehat{\Theta}_{1}(\cdot,\alpha(\cdot))X(\cdot;x,i,\mathbf{\widehat{\Theta}},\widehat{\nu})+\widehat{\nu}_{1}(\cdot),\\
&\widehat{u}_{2}(\cdot;x,i,\mathbf{\widehat{\Theta}},\widehat{\nu})\triangleq \widehat{\Theta}_{2}(\cdot,\alpha(\cdot))X(\cdot;x,i,\mathbf{\widehat{\Theta}},\widehat{\nu})+\widehat{\nu}_{2}(\cdot),\\
\end{aligned}
\right.
\end{equation}
as the outcome of the closed-loop Nash equilibrium strategy $(\mathbf{\widehat{\Theta}},\widehat{\nu})$ and denote
$$V(x,i)=J(x,i;\widehat{\mathbf{\Theta}},\widehat{\nu}),$$
as the closed-loop value function of Problem (M-ZLQ).
\end{definition}

\begin{remark}\rm
  In \eqref{closed-loop-Nash-equilibrium-point}, the state process $X(\cdot; x,i, \mathbf{\widehat{\Theta}}_{1},\widehat{\nu}_{1},u_{2})$ appearing in $J(x,i;\mathbf{\widehat{\Theta}}_{1},\widehat{\nu}_{1},u_{2})$ is different from $X(\cdot; x,i,u_{1}, \mathbf{\widehat{\Theta}}_{2},\widehat{\nu}_{2})$  in $J(x,i;u_{1},\mathbf{\widehat{\Theta}}_{2},\widehat{\nu}_{2})$, and both are different from $X(\cdot; x,i, \mathbf{\widehat{\Theta}},\widehat{\nu})$. Therefore, comparing with \eqref{ZLQ-open-loop-Nash-equilibrium-point}, we see that \eqref{closed-loop-Nash-equilibrium-point}  does not imply that the outcome $(\widehat{u}_{1}(\cdot;x,i,\mathbf{\widehat{\Theta}},\widehat{\nu}),\widehat{u}_{2}(\cdot;x,i,\mathbf{\widehat{\Theta}},\widehat{\nu}))$ of the closed-loop Nash equilibrium strategy is an open-loop saddle point of Problem (M-ZLQ).
\end{remark}

For a given 4-tuple $(\mathbf{\widehat{\Theta}_{1}}(\cdot),\widehat{\nu}_{1}(\cdot);\mathbf{\widehat{\Theta}_{2}}(\cdot),
\widehat{\nu}_{2}(\cdot))\in\mathcal{V}_{cl}$ and $(u_{1}(\cdot),u_{2}(\cdot))\in\mathcal{U}_{1}\times\mathcal{U}_{2}$, let $\widehat{X}_{k}(\cdot;x,i,u_{k})$ be the solution to the following SDE:
\begin{equation}\label{ZLQ-closed-state-k}
    \left\{
    \begin{aligned}
    &d\widehat{X}_{k}(t)=\big[\widehat{A}_{k}(\alpha)\widehat{X}_{k}+B_{k}(\alpha)u_{k}+\widehat{b}_{k}\big]dt
    +\big[\widehat{C}_{k}(\alpha)\widehat{X}_{k}+D_{k}(\alpha)u_{k}+\widehat{\sigma}_{k}\big]dW(t),\quad t\in[0,T],\\
    &\widehat{X}_{k}(0)=x,\quad i\in\mathcal{S},
    \end{aligned}
    \right.
\end{equation}
and
\begin{equation}\label{cost-ZLQ-k}
\begin{aligned}
    \widehat{J}_{k}\left(x,i;u_{k}\right)
    & = \mathbb{E}\Big\{\int_{0}^{\infty}\left[
    \left<
    \left(
    \begin{matrix}
    \widehat{Q}_{k}(\alpha) & \widehat{S}_{k}(\alpha)^{\top} \\
    \widehat{S}_{k}(\alpha) & R_{kk}(\alpha)
    \end{matrix}
    \right)
    \left(
    \begin{matrix}
    \widehat{X}_{k} \\
    u_{k}
    \end{matrix}
    \right),
    \left(
    \begin{matrix}
    \widehat{X}_{k} \\
    u_{k}
    \end{matrix}
    \right)
    \right>
    +2\left<
    \left(
    \begin{matrix}
    \widehat{q}_{k} \\
    \widehat{\rho}_{k}
    \end{matrix}
    \right),
    \left(
    \begin{matrix}
    \widehat{X}_{k}\\
    u_{k}
    \end{matrix}
    \right)
    \right>\right]dt\\
    &\quad +\big<G(\alpha(T))X_{k}(T)+2g,X_{k}(T)\big>\Big\},\quad k=1,2,
  \end{aligned}
\end{equation}
where for $i\in\mathcal{S}$,
\begin{equation}\label{ZLQ-notation-state-cost}
    \left\{
\begin{aligned}
&\widehat{A}_{1}(\cdot,i)=A(\cdot,i)+B_{2}(\cdot,i)\widehat{\Theta}_{2}(\cdot,i),\quad
\widehat{b}_{1}(\cdot)=B_{2}(\cdot,\alpha(\cdot))\widehat{\nu}_{2}(\cdot)+b(\cdot),\\
&\widehat{Q}_{1}(\cdot,i)= Q(\cdot,i)+S_{2}(\cdot,i)^{\top}\widehat{\Theta}_{2}(\cdot,i)
+\widehat{\Theta}_{2}(\cdot,i)^{\top}S_{2}(\cdot,i)
+\widehat{\Theta}_{2}(\cdot,i)^{\top}R_{22}(\cdot,i)\widehat{\Theta}_{2}(\cdot,i),\\
&\widehat{S}_{1}(\cdot,i)=S_{1}(\cdot,i)+R_{12}(\cdot,i)\widehat{\Theta}_{2}(\cdot,i),\quad
\widehat{\rho}_{1}(\cdot)=\rho_{1}(\cdot)+R_{12}(\cdot,\alpha(\cdot))\widehat{\nu}_{2}(\cdot),\\
&\widehat{q}_{1}(\cdot)=q(\cdot)+\widehat{\Theta}_{2}(\cdot,\alpha(\cdot))^{\top}\rho_{2}(\cdot)
+\big[S_{2}(\cdot,\alpha(\cdot))^{\top}+\widehat{\Theta}_{2}(\cdot,\alpha(\cdot))^{\top}R_{22}(\cdot,\alpha(\cdot))\big]\widehat{\nu}_{2}(\cdot),\\
&\widehat{A}_{2}(\cdot,i)=A(\cdot,i)+B_{1}(\cdot,i)\widehat{\Theta}_{1}(\cdot,i),\quad
\widehat{b}_{2}(\cdot)=B_{1}(\cdot,\alpha(\cdot))\widehat{\nu}_{1}(\cdot)+b(\cdot),\\
&\widehat{Q}_{2}(\cdot,i)= Q(\cdot,i)+S_{1}(\cdot,i)^{\top}\widehat{\Theta}_{1}(\cdot,i)
+\widehat{\Theta}_{1}(\cdot,i)^{\top}S_{1}(\cdot,i)
+\widehat{\Theta}_{1}(\cdot,i)^{\top}R_{11}(\cdot,i)\widehat{\Theta}_{1}(\cdot,i),\\
&\widehat{S}_{2}(\cdot,i)=S_{2}(\cdot,i)+R_{21}(\cdot,i)\widehat{\Theta}_{1}(\cdot,i),\quad
\widehat{\rho}_{2}(\cdot)=\rho_{2}(\cdot)+R_{21}(\cdot,\alpha(\cdot))\widehat{\nu}_{1}(\cdot),\\
&\widehat{q}_{2}(\cdot)=q(\cdot)+\widehat{\Theta}_{1}(\cdot,\alpha(\cdot))^{\top}\rho_{1}(\cdot)
+\big[S_{1}(\cdot,\alpha(\cdot))^{\top}+\widehat{\Theta}_{1}(\cdot,\alpha(\cdot))^{\top}R_{11}(\cdot,\alpha(\cdot))\big]\widehat{\nu}_{1}(\cdot).
\end{aligned}
\right.
\end{equation}
%For $k=1,2$, if $\widehat{b}_{k}=\widehat{\sigma}_{k}=\widehat{q}_{k}=0$, $\widehat{\rho}_{k}=0$, $g=0$, then we donate the corresponding state equation and performance functional as $\widehat{X}_{k}^{0}(\cdot;x,i,u_{k})$ and $\widehat{J}_{k}^{0}(x,i;u_{k})$, respectively. In addition, one can defined $\widehat{X}_{k}(\cdot;x,i,\mathbf{\Theta}_{k},\nu_{k})$ (respectively, $\widehat{X}_{k}^{0}(\cdot;x,i,\mathbf{\Theta}_{k},\nu_{k})$) and $\widehat{J}_{k}(x,i;\mathbf{\Theta}_{k},\nu_{k})$ (respectively, $\widehat{J}_{k}^{0}(x,i;\mathbf{\Theta}_{k},\nu_{k})$) similar to \eqref{state-LQ-closed}-\eqref{SLQ-cost-closed}.
Clearly, by some straightforward calculations, one has:
\begin{equation}\label{ZLQ-cost-closed}
    \begin{aligned}
        & J\big(x,i;u_{1},\mathbf{\widehat{\Theta}}_{2},\widehat{\nu}_{2}\big)
        =\widehat{J}_{1}\left(x,i;u_{1}\right)
        +\mathbb{E}\int_{0}^{T}\big<R_{22}(\alpha)\widehat{\nu}_{2}+2\rho_{2},\widehat{\nu}_{2}\big>dt,\\
        &J\big(x,i;\mathbf{\widehat{\Theta}}_{1},\widehat{\nu}_{1},u_{2}\big)
        =\widehat{J}_{2}\left(x,i;u_{2}\right)
        +\mathbb{E}\int_{0}^{T}\big<R_{11}(\alpha)\widehat{\nu}_{1}+2\rho_{1},\widehat{\nu}_{1}\big>dt,
    \end{aligned}
\end{equation}

%In the following, we define
%$$\widehat{J}_{k}\left(x,i;\mathbf{\Theta}_{k},\nu_{k}\right)
%=\widehat{J}_{k}\left(x,i;u_{k}\right)\quad \text{with} \quad u_{k}(\cdot)
%=\Theta_{k}(\cdot,\alpha(\cdot))\widehat{X}_{k}(\cdot;x,i,\mathbf{\Theta}_{k},\nu_{k})+\nu_{k}(\cdot),$$
%where $\widehat{X}_{k}(\cdot;x,i,\mathbf{\Theta}_{k},\nu_{k})$ solves the following SDE:
%\begin{equation}\label{ZLQ-closed-state-k-2}
%    \left\{
%    \begin{aligned}
%    d\widehat{X}_{k}(t)&=\Big\{\big[\widehat{A}_{k}(t,\alpha(t))+B_{k}(t,\alpha(t))\Theta_{k}(t,\alpha(t))\big]\widehat{X}_{k}(t)
%    +B_{k}(t,\alpha(t))\nu_{k}(t)+\widehat{b}_{k}(t)\Big\}dt\\
%    &\quad+\Big\{\big[\widehat{C}_{k}(t,\alpha(t))+D_{k}(t,\alpha(t))\Theta_{k}(t,\alpha(t))\big]\widehat{X}_{k}(t)
%    +D_{k}(t,\alpha(t))\nu_{k}(t)+\widehat{\sigma}_{k}(t)\big]dW(t),\\
%    \widehat{X}_{k}(0)&=x,\quad i\in\mathcal{S}.
%    \end{aligned}
%    \right.
%\end{equation}
%If $\widehat{b}_{k}=\widehat{\sigma}_{k}=\widehat{q}_{k}=0$, then one can define the $\widehat{X}_{k}^{0}(\cdot;x,i,\mathbf{\Theta}_{k},\nu_{k})$ and $\widehat{J}_{k}^{0}\left(x,i;\mathbf{\Theta}_{k},\nu_{k}\right) $ similarly.

Now,  we introduce the following two coupled SLQ control problems for studying the closed-loop solvability of Problem (M-ZLQ).\\
\noindent\textbf{Problem ($\widehat{\text{M-SLQ}}$)$_{1}$:} For any given $(x,i)\in \mathbb{R}^{n}\times \mathcal{S}$, find a $u_{1}^{*}\in \mathcal{U}_{1}$ such that
\begin{equation}
\widehat{J}_{1}\left(x,i;u_{1}^{*}\right)=\inf_{u_{1}\in \mathcal{U}_{1}}\widehat{J}_{1}\left(x,i;u_{1}\right)\triangleq\widehat{V}_{1}(x,i).
\end{equation}
\noindent\textbf{Problem ($\widehat{\text{M-SLQ}}$)$_{2}$:} For any given $(x,i)\in \mathbb{R}^{n}\times \mathcal{S}$, find a $u_{2}^{*}\in \mathcal{U}_{2}$ such that
\begin{equation}
\widehat{J}_{2}\left(x,i;u_{2}^{*}\right)=\sup_{u_{2}\in \mathcal{U}_{2}}\widehat{J}_{1}\left(x,i;u_{2}\right)\triangleq\widehat{V}_{2}(x,i).
\end{equation}
In the above, the function $\widehat{V}_{k}(\cdot,\cdot)$ is called the value function of Problem ($\widehat{\text{M-SLQ}}$)$_{k}$, $k=1,2$. If $\widehat{b}_{k}=\widehat{\sigma}_{k}=\widehat{q}_{k}=0$, $\widehat{\rho}_{k}=0$, $g=0$, then we donate the corresponding value function as $\widehat{V}_{k}^{0}(\cdot,\cdot)$.
%If $\widehat{b}_{k}=\widehat{\sigma}_{k}=\widehat{q}_{k}=0$, $\widehat{\rho}_{k}=0$, then we
% denote the Problem ($\widehat{\text{M-SLQ}}$)$_{k}$ as Problem ($\widehat{\text{M-SLQ}}$)$_{k}^{0}$ and denote the performance functional of Problem ($\widehat{\text{M-SLQ}}$)$_{k}^{0}$ as $\widehat{J}_{k}^{0}\left(x,i;u_{k}\right)$.

By Definition \ref{def-ZLQ-closed-loop-equilibrium}, a 4-tuple $(\mathbf{\widehat{\Theta}_{1}},\widehat{\nu}_{1};\mathbf{\widehat{\Theta}_{2}},\widehat{\nu}_{2})$ is a closed-loop Nash equilibrium strategy of Problem (M-ZLQ) if and only if $(\mathbf{\widehat{\Theta}_{k}},\widehat{\nu}_{k})$ is a closed-loop optimal control of Problem ($\widehat{\text{M-SLQ}}$)$_{k}$ for $k=1,2$.
The following result provides a deeper characterization of the closed-loop Nash equilibrium strategy of the Problem (M-ZLQ).

\begin{theorem}\label{thm-ZLQ-closed-loop}
A 4-tuple $(\mathbf{\widehat{\Theta}_{1}}(\cdot),\widehat{\nu}_{1}(\cdot);\mathbf{\widehat{\Theta}_{2}}(\cdot),
\widehat{\nu}_{2}(\cdot))
\in\mathcal{V}_{cl}$ is a closed-loop Nash equilibrium strategy of Problem (M-ZLQ)  if and only if:
 \begin{description}
  \item[(i)] The CDREs \eqref{CDREs} admits a unique solution $\mathbf{P}(\cdot)\in\mathcal{D}\left(C(0,T;\mathbb{S}^{n})\right)$ such that
      \begin{equation}\label{ZLQ-closed-loop-constraint}
        \left\{
        \begin{aligned}
        &\mathcal{N}(t;P,i)\widehat{\Theta}(t,i)+\mathcal{L}(t;P,i)^{\top}=0, \quad a.e.\text{ }t\in[0,T],\\
        &\mathcal{N}_{11}(\cdot;P,i)\geq 0,\quad \mathcal{N}_{22}(\cdot;P,i)\leq 0,
        \end{aligned}
        \right.
      \end{equation}
      where
      $\widehat{\Theta}(\cdot,i)
    \equiv(\widehat{\Theta}_{1}(\cdot,i)^{\top},\widehat{\Theta}_{2}(\cdot,i)^{\top})^{\top}$ for any $ i\in\mathcal{S}$;
  \item[(ii)] The adapted solution $(\eta(\cdot),\zeta(\cdot),\mathbf{z}(\cdot))$ to BSDE \eqref{eta} satisfies
      \begin{equation}\label{ZLQ-eta-closed-constraint}
       \mathcal{N}(t;P,\alpha)\widehat{\nu}(t)+\widetilde{\rho}(t)=0,\quad a.e.,\text{ }t\in[0,T],
      \end{equation}
      where $\widetilde{\rho}(\cdot)$ is defined in \eqref{SLQ-rho} and
      $\widehat{\nu}(\cdot) \equiv (\widehat{\nu}_{1}(\cdot)^{\top},\widehat{\nu}_{2}(\cdot)^{\top})^{\top}$.
\end{description}

In this case, the closed-loop value function is given by
\begin{equation}\label{ZLQ-value}
      V(x,i)=\mathbb{E}\Big\{\big<P(0,i)x,x\big>+2\big<\eta(0),x\big>+\int_{0}^{T}\big[
      \big<P(\alpha)\sigma,\sigma\big>+2\big<\eta,b\big>+2\big<\zeta,\sigma\big>
      -\big<\mathcal{N}(P,\alpha)^{\dag}\widetilde{\rho},\widetilde{\rho}\big>\big]dt\Big\}.
  \end{equation}
\end{theorem}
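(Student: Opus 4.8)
The plan is to exploit the decomposition stated just before the theorem: the $4$-tuple $(\widehat{\mathbf{\Theta}}_1,\widehat{\nu}_1;\widehat{\mathbf{\Theta}}_2,\widehat{\nu}_2)$ is a closed-loop Nash equilibrium strategy of Problem (M-ZLQ) if and only if $(\widehat{\mathbf{\Theta}}_k,\widehat{\nu}_k)$ is a closed-loop optimal strategy of Problem ($\widehat{\text{M-SLQ}}$)$_k$ for $k=1,2$, where Problem ($\widehat{\text{M-SLQ}}$)$_1$ is a minimization (hence an (M-SLQ)-type problem) and Problem ($\widehat{\text{M-SLQ}}$)$_2$ is a maximization (an ($\overline{\text{M-SLQ}}$)-type problem). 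First I would apply Lemma \ref{thm-SLQ-closed}, in the form of Remark \ref{rmk-SLQ-closed-2}, separately to each auxiliary problem. For Problem ($\widehat{\text{M-SLQ}}$)$_k$ this produces a Riccati solution $\widehat{\mathbf{P}}_k(\cdot)$ together with the stationary relation $\widehat{\mathcal{N}}_k\widehat{\Theta}_k+\widehat{\mathcal{L}}_k^{\top}=0$, the sign condition $\widehat{\mathcal{N}}_1\geq 0$ (respectively $\widehat{\mathcal{N}}_2\leq 0$), and the analogue of the BSDE condition \eqref{eta-constraint-2}. Here $\widehat{\mathcal{M}}_k,\widehat{\mathcal{L}}_k,\widehat{\mathcal{N}}_k$ denote the operators \eqref{notation-LMN} built from the coefficients of Problem ($\widehat{\text{M-SLQ}}$)$_k$ in \eqref{ZLQ-closed-state-k}--\eqref{ZLQ-notation-state-cost}, the control coefficients being $B_k,D_k$.

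The second step is a purely algebraic identification. Substituting \eqref{ZLQ-notation-state-cost} into \eqref{notation-LMN} and collecting terms through \eqref{ZLQ-closed-notations}--\eqref{ZLQ-notation-LN}, I would verify, for any $\mathbf{P}$,
\[
\widehat{\mathcal{N}}_1(P)=\mathcal{N}_{11}(P),\qquad \widehat{\mathcal{N}}_2(P)=\mathcal{N}_{22}(P),\qquad
\widehat{\mathcal{L}}_1(P)=\mathcal{L}_1(P)+\widehat{\Theta}_2^{\top}\mathcal{N}_{21}(P),\qquad
\widehat{\mathcal{L}}_2(P)=\mathcal{L}_2(P)+\widehat{\Theta}_1^{\top}\mathcal{N}_{12}(P),
\]
and, using $\mathcal{N}_{21}=\mathcal{N}_{12}^{\top}$,
\[
\widehat{\mathcal{M}}_1(P)=\mathcal{M}(P)+\mathcal{L}_2(P)\widehat{\Theta}_2+\widehat{\Theta}_2^{\top}\mathcal{L}_2(P)^{\top}+\widehat{\Theta}_2^{\top}\mathcal{N}_{22}(P)\widehat{\Theta}_2,
\]
together with the symmetric identity for $\widehat{\mathcal{M}}_2$. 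With these in hand, the two stationary relations $\widehat{\mathcal{N}}_k\widehat{\Theta}_k+\widehat{\mathcal{L}}_k^{\top}=0$ become exactly the two block rows of $\mathcal{N}(P)\widehat{\Theta}+\mathcal{L}(P)^{\top}=0$, while the sign conditions become $\mathcal{N}_{11}\geq 0$, $\mathcal{N}_{22}\leq 0$; this is precisely \eqref{ZLQ-closed-loop-constraint}.

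Next I would show that the two Riccati solutions coincide, $\widehat{\mathbf{P}}_1=\widehat{\mathbf{P}}_2=:\mathbf{P}$, and that this common $\mathbf{P}$ solves \eqref{CDREs}. Coincidence follows from \eqref{ZLQ-cost-closed} and the value-function formula \eqref{SLQ-value}: the two auxiliary value functions differ only by a term independent of $x$, so their $x$-quadratic coefficients $\widehat{P}_1(0,i)$ and $\widehat{P}_2(0,i)$ agree, and running this comparison from an arbitrary initial time yields $\widehat{\mathbf{P}}_1\equiv\widehat{\mathbf{P}}_2$. For the reduction, I would insert the previous identities into $\dot{\widehat{P}}_1+\widehat{\mathcal{M}}_1-\widehat{\mathcal{L}}_1\widehat{\mathcal{N}}_1^{\dag}\widehat{\mathcal{L}}_1^{\top}=0$ and use $\widehat{\mathcal{L}}_1\widehat{\mathcal{N}}_1^{\dag}\widehat{\mathcal{L}}_1^{\top}=-\widehat{\mathcal{L}}_1\widehat{\Theta}_1$ (from the stationary relation), which collapses the equation to $\dot{P}+\mathcal{M}+\mathcal{L}_1\widehat{\Theta}_1+\mathcal{L}_2\widehat{\Theta}_2+\widehat{\Theta}_2^{\top}\big(\mathcal{L}_2^{\top}+\mathcal{N}_{22}\widehat{\Theta}_2+\mathcal{N}_{21}\widehat{\Theta}_1\big)=0$. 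The parenthesized factor is the second block row of $\mathcal{N}\widehat{\Theta}+\mathcal{L}^{\top}=0$ and hence vanishes, while $\mathcal{L}_1\widehat{\Theta}_1+\mathcal{L}_2\widehat{\Theta}_2=\mathcal{L}\widehat{\Theta}=-\mathcal{L}\mathcal{N}^{\dag}\mathcal{L}^{\top}$, the last equality using $\mathcal{R}(\mathcal{L}^{\top})\subseteq\mathcal{R}(\mathcal{N})$ (forced by $\mathcal{N}\widehat{\Theta}=-\mathcal{L}^{\top}$); this is exactly \eqref{CDREs}. I expect this cancellation, driven by the off-diagonal block of the stationary condition, to be the crux of the argument. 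The BSDE \eqref{eta} and its constraint \eqref{ZLQ-eta-closed-constraint} are handled by the same mechanism, comparing the linear-in-$x$ parts of \eqref{SLQ-value} to identify the adjoint triple $(\eta,\zeta,\mathbf{z})$ and reducing the two auxiliary BSDEs to \eqref{eta}; the value-function expression \eqref{ZLQ-value} then reads off directly from \eqref{SLQ-value}.

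For the converse direction the same identities run backwards: given $\mathbf{P}$ solving \eqref{CDREs} with \eqref{ZLQ-closed-loop-constraint} and $(\eta,\zeta,\mathbf{z})$ solving \eqref{eta} with \eqref{ZLQ-eta-closed-constraint}, the block rows of the constraints show that $\mathbf{P}$ and $(\eta,\zeta,\mathbf{z})$ furnish a solution of each auxiliary Riccati equation and BSDE meeting all hypotheses of Remark \ref{rmk-SLQ-closed-2} (with $\widehat{\mathcal{N}}_1=\mathcal{N}_{11}\geq 0$ and $\widehat{\mathcal{N}}_2=\mathcal{N}_{22}\leq 0$, and the reduction of $\dot{P}+\widehat{\mathcal{M}}_k-\widehat{\mathcal{L}}_k\widehat{\mathcal{N}}_k^{\dag}\widehat{\mathcal{L}}_k^{\top}=0$ to \eqref{CDREs} now reading in the reverse direction). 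Hence $(\widehat{\mathbf{\Theta}}_k,\widehat{\nu}_k)$ is closed-loop optimal for Problem ($\widehat{\text{M-SLQ}}$)$_k$, and the decomposition equivalence delivers the closed-loop Nash equilibrium strategy.
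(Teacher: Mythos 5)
Your overall architecture is exactly the paper's: decompose into the two coupled problems ($\widehat{\text{M-SLQ}}$)$_{1}$, ($\widehat{\text{M-SLQ}}$)$_{2}$, apply Lemma \ref{thm-SLQ-closed} in the form of Remark \ref{rmk-SLQ-closed-2} to each, establish the identities $\widehat{\mathcal{N}}_{k}=\mathcal{N}_{kk}$, $\widehat{\mathcal{L}}_{k}=\mathcal{L}_{k}+\widehat{\Theta}_{l}^{\top}\mathcal{N}_{lk}$, $\widehat{\mathcal{M}}_{k}=\mathcal{M}+\mathcal{L}_{l}\widehat{\Theta}_{l}+\widehat{\Theta}_{l}^{\top}\mathcal{L}_{l}^{\top}+\widehat{\Theta}_{l}^{\top}\mathcal{N}_{ll}\widehat{\Theta}_{l}$ (these are \eqref{ZLQ-MLN-relation}), identify $\mathbf{P}_{1}\equiv\mathbf{P}_{2}$ by a value-function comparison run from arbitrary initial times, and collapse the auxiliary Riccati equation to \eqref{CDREs} using the off-diagonal block row of $\mathcal{N}\widehat{\Theta}+\mathcal{L}^{\top}=0$ together with Remark \ref{rmk-Pseudoinverse}(iii). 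That part of your argument, including the cancellation $\widehat{\mathcal{L}}_{1}\widehat{\mathcal{N}}_{1}^{\dag}\widehat{\mathcal{L}}_{1}^{\top}=-\widehat{\mathcal{L}}_{1}\widehat{\Theta}_{1}$ and $\mathcal{L}\widehat{\Theta}=-\mathcal{L}\mathcal{N}^{\dag}\mathcal{L}^{\top}$, is correct and matches the paper's computation.

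The gap is in your treatment of the adjoint BSDEs. Each auxiliary problem produces its own triple $(\eta_{k},\zeta_{k},\mathbf{z}_{k})$ solving \eqref{ZLQ-eta-k} with constraint \eqref{ZLQ-eta-k-constraint}, and to assemble the two scalar constraints into the single vector condition \eqref{ZLQ-eta-closed-constraint} — whose $\widetilde{\rho}$ is built from \emph{one} pair $(\eta,\zeta)$ — you need the pathwise process identity $(\eta_{1},\zeta_{1},\mathbf{z}_{1})\equiv(\eta_{2},\zeta_{2},\mathbf{z}_{2})$. Your proposed route, ``comparing the linear-in-$x$ parts of \eqref{SLQ-value},'' cannot deliver this: formula \eqref{SLQ-value} is an unconditional expectation at the initial time, so comparing linear coefficients pins down only $\mathbb{E}\langle\eta_{1}(0)-\eta_{2}(0),x\rangle=0$; it says nothing about $\zeta_{k}$ or $\mathbf{z}_{k}$; and running it from later initial times would require a conditional (essential-infimum, $\mathcal{F}_{t}$-measurable-coefficient) version of Lemma \ref{thm-SLQ-closed} that the paper does not provide — unlike the $P$-identification, where the homogeneous time-$t$ value functions are deterministic quadratic forms and the comparison is clean. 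The paper closes this step differently: it sets $(\widehat{\eta},\widehat{\zeta},\widehat{\mathbf{z}})=(\eta_{1}-\eta_{2},\zeta_{1}-\zeta_{2},\mathbf{z}_{1}-\mathbf{z}_{2})$, computes the difference generator explicitly using both constraints \eqref{ZLQ-eta-k-constraint} and the identities \eqref{ZLQ-MLN-relation}, and finds that the difference solves the homogeneous linear BSDE
\begin{equation*}
d\widehat{\eta}=-\Bigl\{\bigl[A(\alpha)+B(\alpha)\widehat{\Theta}(\alpha)\bigr]^{\top}\widehat{\eta}
+\bigl[C(\alpha)+D(\alpha)\widehat{\Theta}(\alpha)\bigr]^{\top}\widehat{\zeta}\Bigr\}dt
+\widehat{\zeta}\,dW+\widehat{\mathbf{z}}\cdot d\widetilde{\mathbf{N}},\qquad \widehat{\eta}(T)=0,
\end{equation*}
whence $(\widehat{\eta},\widehat{\zeta},\widehat{\mathbf{z}})=(0,0,\mathbf{0})$ by uniqueness of solutions to linear BSDEs; only then does it show (again by direct computation of the generator $\Sigma$, using the constraints) that the common triple solves \eqref{eta}. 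You should replace your value-function step by this difference-BSDE argument; with that substitution, and carrying out the short algebraic verification behind ``\eqref{ZLQ-value} reads off directly'' (namely $\langle\mathcal{N}_{22}\widehat{\nu}_{2},\widehat{\nu}_{2}\rangle-\langle\mathcal{N}_{11}\widehat{\nu}_{1},\widehat{\nu}_{1}\rangle+2\langle\widetilde{\rho}_{2},\widehat{\nu}_{2}\rangle=-\langle\mathcal{N}(P,\alpha)^{\dag}\widetilde{\rho},\widetilde{\rho}\rangle$), your proof coincides with the paper's.
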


\begin{proof}
For given $\mathbf{P}(\cdot)\in\mathcal{D}\left(C(0,T;\mathbb{S}^{n})\right)$, we define
\begin{equation*}\label{notation-hat-LMN}
\begin{array}{l}
\widehat{\mathcal{M}}_{k}(t;P,i)\triangleq P(t,i)\widehat{A}_{k}(t,i)+\widehat{A}_{k}(t,i)^{\top}P(t,i)+\widehat{C}_{k}(t,i)^{\top}P(t,i)\widehat{C}_{k}(t,i)
+\widehat{Q}_{k}(t,i)+\sum_{j=1}^{L}\pi_{ij}(t)P(t,j),\\[2mm]
\widehat{\mathcal{L}}_{k}(t;P,i)\triangleq P(t,i)B_{k}(t,i)+\widehat{C}_{k}(t,i)^{\top}P(t,i)D_{k}(t,i)+\widehat{S}_{k}(t,i)^{\top},\\[2mm]
\widehat{\mathcal{N}}_{k}(t;P,i)\triangleq D_{k}(t,i)^{\top}P(t,i)D_{k}(t,i)+R_{kk}(t,i),\quad t\in[0,T],\quad i\in\mathcal{S}.
\end{array}
\end{equation*}
Then by Lemma \ref{thm-SLQ-closed} and Remark \ref{rmk-SLQ-closed-2},  a 4-tuple $(\mathbf{\widehat{\Theta}_{1}},\widehat{\nu}_{1};\mathbf{\widehat{\Theta}_{2}},\widehat{\nu}_{2})$ is a closed-loop Nash equilibrium strategy of Problem (M-ZLQ) if and only if (for $k=1,2$):
\begin{description}
\item[(i)] The CDREs
\begin{equation}\label{ZLQ-CDREs-k}
    \left\{
    \begin{aligned}
     &\dot{P}_{k}(t,i)+\widehat{\mathcal{M}}_{k}(t;P_{k},i)-\widehat{\mathcal{L}}_{k}(t;P_{k},i)\widehat{\mathcal{N}}_{k}(t;P_{k},i)^{\dag}\widehat{\mathcal{L}}_{k}(t;P_{k},i)^{\top}=0,\\
    %&\mathcal{L}(t;P,i)\big[I-\mathcal{N}(t;P,i)\mathcal{N}(t;P,i)^{\dag}\big]=0,\quad t\in[0,T],\\
    &P_{k}(T,i)=G(i),\quad i\in\mathcal{S},
    \end{aligned}
    \right.
\end{equation}
admits a unique solution $\mathbf{P}_{k}(\cdot)\in\mathcal{D}\left(C(0,T;\mathbb{S}^{n})\right)$ such that
 \begin{equation}\label{ZLQ-Riccati-constraint-1}
 \left\{
 \begin{aligned}
       &\widehat{\mathcal{N}}_{k}(t,P_{k},i)\widehat{\Theta}_{k}(t,i)+\widehat{\mathcal{L}}_{k}(t,P_{k},i)^{\top}=0,\\
       &(-1)^{k+1}\widehat{\mathcal{N}}_{k}(t,P_{k},i)\geq 0,\quad a.e.\text{ }t\in[0,T];
\end{aligned}
\right.
   \end{equation}
\item[(ii)] The solution $(\eta_{k}(\cdot),\zeta_{k}(\cdot),\mathbf{z}_{k}(\cdot))$ to the following BSDE
\begin{equation}\label{ZLQ-eta-k}
       \left\{
      \begin{aligned}
        d\eta_{k}(t)&=-\big\{\big[\widehat{A}_{k}(t,\alpha(t))^{\top}-\widehat{\mathcal{L}}_{k}(t,P_{k},\alpha(t))\widehat{\mathcal{N}}_{k}(t,P_{k},\alpha(t))^{\dag}B_{k}(t,\alpha(t))^{\top}\big]\eta_{k}(t)\\
        &\quad+\big[\widehat{C}_{k}(t,\alpha(t))^{\top}-\widehat{\mathcal{L}}_{k}(t,P_{k},\alpha(t))\widehat{\mathcal{N}}_{k}(t,P_{k},\alpha(t))^{\dag}D_{k}(t,\alpha(t))^{\top}\big]\zeta_{k}(t)\\
        &\quad+\big[\widehat{C}_{k}(t,\alpha(t))^{\top}-\widehat{\mathcal{L}}_{k}(t,P_{k},\alpha(t))\widehat{\mathcal{N}}_{k}(t,P_{k},\alpha(t))^{\dag}D_{k}(t,\alpha(t))^{\top}\big]P_{k}(t,\alpha(t))\widehat{\sigma}_{k}(t)\\
        &\quad-\widehat{\mathcal{L}}_{k}(t,P_{k},\alpha(t))\widehat{\mathcal{N}}_{k}(t,P_{k},\alpha(t))^{\dag}\widehat{\rho}_{k}+P_{k}(t,\alpha(t))\widehat{b}_{k}(t)+\widehat{q}_{k}(t)\big\}dt\\
        &\quad+\zeta_{k}(t) dW(t)+\mathbf{z}_{k}(t)\cdot d\mathbf{\widetilde{N}}(t),\quad t\in [0,T],\\
        \eta_{k}(T)&=g,
      \end{aligned}
      \right.
   \end{equation}
   satisfies
   \begin{equation}\label{ZLQ-eta-k-constraint}
       \widehat{\mathcal{N}}_{k}(t,P_{k},\alpha)\widehat{\nu}_{k}(t)+\widehat{\varrho}_{k}(t)=0,\quad a.e.\text{ }t\in[0,T],
   \end{equation}
   where
   \begin{equation}\label{widehat-varrho-k}
   \begin{array}{r}
       \widehat{\varrho}_{k}(t)\triangleq B_{k}(t,\alpha(t))^{\top}\eta_{k}(t)+D_{k}(t,\alpha(t))^{\top}\zeta_{k}(t)+D_{k}(t,\alpha(t))^{\top}P_{k}(t,\alpha(t))\widehat{\sigma}_{k}(t)+\widehat{\rho}_{k}(t),\\[2mm]
       \quad a.e.\text{ }t\in[0,T].
       \end{array}
   \end{equation}
\end{description}

Noting that for any $i\in\mathcal{S}$,
$$
\big<P_{1}(0,i)x,x\big>=\widehat{V}_{1}^{0}(x,i)=J^{0}(x,i;\widehat{\mathbf{\Theta}},0)=\widehat{V}_{2}^{0}(x,i)=\big<P_{2}(0,i)x,x\big>,\quad \forall x\in\mathbb{R}^{n},
$$
which implies $P_{1}(0,i)=P_{2}(0,i)$. In fact, one can further obtain that $P_{1}(t,i)=P_{2}(t,i)$ for any $t\in[0,T]$ by considering the Problem ($\widehat{\text{M-ZLQ}}$)$_{k}^{0}$ over the horizon $[t,T]$.  Without loss of generality, for $i\in\mathcal{S}$, let $P(\cdot,i)\triangleq P_{1}(\cdot,i)=P_{2}(\cdot,i)$. Then, by some straightforward calculations,  one has
\begin{equation}\label{ZLQ-MLN-relation}
\left\{
    \begin{array}{l}
  \widehat{\mathcal{M}}_{k}(t;P_{k},i)=\mathcal{M}(t;P,i)+\mathcal{L}_{l}(t;P,i)\widehat{\Theta}_{l}(t,i)+\widehat{\Theta}_{l}(t,i)^{\top}\mathcal{L}_{l}(t;P,i)^{\top}+\widehat{\Theta}_{l}(t,i)^{\top}\mathcal{N}_{ll}(t;P,i)\widehat{\Theta}_{l}(t,i),\\[2mm]
  \widehat{\mathcal{L}}_{k}(t;P_{k},i)=\mathcal{L}_{k}(t;P,i)+\widehat{\Theta}_{l}(t,i)^{\top}\mathcal{N}_{lk}(t;P,i),\\[2mm]
  \widehat{\mathcal{N}}_{k}(t;P_{k},i)=\mathcal{N}_{kk}(t;P,i),\quad t\in[0,T],\quad i\in\mathcal{S},\quad (k,l)\in\{(1,2),(2,1)\}.
    \end{array}
    \right.
\end{equation}
Substituting \eqref{ZLQ-MLN-relation} into \eqref{ZLQ-CDREs-k}-\eqref{ZLQ-Riccati-constraint-1}, one has
\begin{equation}\label{ZLQ-CDREs-1}
   \left\{
   \begin{aligned}
   0&=\dot{P}(t,i)+\mathcal{M}(t;P,i)-\mathcal{L}_{1}(t;P,i)\mathcal{N}_{11}(t;P,i)^{\dag}\mathcal{L}_{1}(t;P,i)^{\top}\\
   &\quad +\big[\mathcal{L}_{2}(t;P,i)-\mathcal{L}_{1}(t;P,i)\mathcal{N}_{11}(t;P,i)^{\dag}\mathcal{N}_{12}(t;P,i)\big]\widehat{\Theta}_{2}(t,i)\\
   &\quad +\widehat{\Theta}_{2}(t,i)^{\top}\big[\mathcal{L}_{2}(t;P,i)-\mathcal{L}_{1}(t;P,i)\mathcal{N}_{11}(t;P,i)^{\dag}\mathcal{N}_{12}(t;P,i)\big]^{\top}\\
   &\quad +\widehat{\Theta}_{2}(t,i)^{\top}\big[\mathcal{N}_{22}(t;P,i)-\mathcal{N}_{21}(t;P,i)\mathcal{N}_{11}(t;P,i)^{\dag}\mathcal{N}_{12}(t;P,i)\big]\widehat{\Theta}_{2}(t,i),\quad t\in[0,T],\\
   P&(T,i)=G(i),\quad i\in\mathcal{S},
   \end{aligned}
   \right.
\end{equation}
and
\begin{equation}\label{ZLQ-CDREs-1-constraint}
    \left\{
    \begin{aligned}
    &\mathcal{N}_{11}(t;P,i)\widehat{\Theta}_{1}(t,i)+\mathcal{N}_{12}(t;P,i)\widehat{\Theta}_{2}(t,i)+\mathcal{L}_{1}(t;P,i)^{\top}=0,\\
    &\mathcal{N}_{21}(t;P,i)\widehat{\Theta}_{1}(t,i)+\mathcal{N}_{22}(t;P,i)\widehat{\Theta}_{2}(t,i)+\mathcal{L}_{2}(t;P,i)^{\top}=0,\\
    & \mathcal{N}_{11}(t;P,i)\geq 0,\quad \mathcal{N}_{22}(t;P,i)\leq 0,\quad i\in\mathcal{S}.
    \end{aligned}
    \right.
\end{equation}
Clearly, the condition \eqref{ZLQ-CDREs-1-constraint} is consistent with \eqref{ZLQ-closed-loop-constraint}. Plugging \eqref{ZLQ-CDREs-1-constraint} into  CDREs \eqref{ZLQ-CDREs-1} and recalling the notations \eqref{ZLQ-notation-LN}, we can further obtain that
\begin{equation}
\begin{aligned}
0&=\dot{P}(t,i)+\mathcal{M}(t;P,i)+\mathcal{L}_{2}(t;P,i)\widehat{\Theta}_{2}(t,i)+\widehat{\Theta}_{2}(t,i)^{\top}\mathcal{L}_{2}(t;P,i)+\widehat{\Theta}_{2}(t,i)^{\top}\mathcal{N}_{22}(t;P,i)\widehat{\Theta}_{2}(t,i)\\
   &\quad- \big[\mathcal{N}_{12}(t;P,i)\widehat{\Theta}_{2}(t,i)+\mathcal{L}_{1}(t;P,i)^{\top}\big]^{\top}\mathcal{N}_{11}(t;P,i)^{\dag}\big[\mathcal{N}_{12}(t;P,i)\widehat{\Theta}_{2}(t,i)+\mathcal{L}_{1}(t;P,i)^{\top}\big]\\
   &=\dot{P}(t,i)+\mathcal{M}(t;P,i)-\big[\mathcal{N}_{21}(t;P,i)\widehat{\Theta}_{1}(t,i)+\mathcal{N}_{22}(t;P,i)\widehat{\Theta}_{2}(t,i)\big]^{\top}\widehat{\Theta}_{2}(t,i)\\
   &\quad-\widehat{\Theta}_{2}(t,i)^{\top}\big[\mathcal{N}_{21}(t;P,i)\widehat{\Theta}_{1}(t,i)+\mathcal{N}_{22}(t;P,i)\widehat{\Theta}_{2}(t,i)\big]-\widehat{\Theta}_{1}(t,i)^{\top}\mathcal{N}_{11}(t;P,i)\widehat{\Theta}_{1}(t,i)\\
   &=\dot{P}(t,i)+\mathcal{M}(t;P,i)-\widehat{\Theta}_{1}(t,i)^{\top}\mathcal{N}_{11}(t;P,i)\widehat{\Theta}_{1}(t,i)-\widehat{\Theta}_{1}(t,i)^{\top}\mathcal{N}_{12}(t;P,i)\widehat{\Theta}_{2}(t,i)\\
   &\quad -\widehat{\Theta}_{2}(t,i)^{\top}\mathcal{N}_{21}(t;P,i)\widehat{\Theta}_{1}(t,i)-\widehat{\Theta}_{2}(t,i)^{\top}\mathcal{N}_{22}(t;P,i)\widehat{\Theta}_{2}(t,i)\\
   &=\dot{P}(t,i)+\mathcal{M}(t;P,i)-\widehat{\Theta}(t,i)^{\top}\mathcal{N}(t;P,i)\widehat{\Theta}(t,i)\\
   &=\dot{P}(t,i)+\mathcal{M}(t;P,i)-\mathcal{L}(t;p,i)\mathcal{N}(t;p,i)^{\dag}\mathcal{L}(t;p,i)^{\top}.
\end{aligned}
\end{equation}
The last equation follows from item (iii) in Remark \ref{rmk-Pseudoinverse}.

On the other hand, the condition \eqref{ZLQ-eta-k-constraint} can be rewritten as:
\begin{equation}\label{ZLQ-eta-k-constraint-2}
    \begin{aligned}
    &\mathcal{N}_{11}(P,\alpha)\widehat{\nu}_{1}+\mathcal{N}_{12}(P,\alpha)\widehat{\nu}_{2}+\widetilde{\rho}_{1}=0,\\
    &\mathcal{N}_{21}(P,\alpha)\widehat{\nu}_{1}+\mathcal{N}_{22}(P,\alpha)\widehat{\nu}_{2}+\widetilde{\rho}_{2}=0.
    \end{aligned}
\end{equation}
where
\begin{equation}
     \widetilde{\rho}_{k}\triangleq B_{k}(\alpha)^{\top}\eta_{k}+D_{k}(\alpha)^{\top}\zeta_{k}+D_{k}(\alpha)^{\top}P(\alpha)\sigma_{k}+\rho_{k},\quad k=1,2.
\end{equation}
Next, we shall verify that the solutions $(\eta_{1},\zeta_{1},\mathbf{z}_{1})$   and $(\eta_{2},\zeta_{2},\mathbf{z}_{2})$ to BSDE \eqref{ZLQ-eta-k} for $k=1,2$ are consistent, which further solves the BSDE \eqref{eta}. To this end, let
$$\widehat{\eta}(\cdot)=\eta_{1}(\cdot)-\eta_{2}(\cdot),\quad \widehat{\zeta}(\cdot)=\zeta_{1}(\cdot)-\zeta_{2}(\cdot),\quad
\widehat{\mathbf{z}}(\cdot)=\mathbf{z}_{1}(\cdot)-\mathbf{z}_{2}(\cdot).$$
%It follows from that  BSDE \eqref{ZLQ-eta-k} that $(\widehat{\eta}(\cdot),\widehat{\zeta}(\cdot),\widehat{\mathbf{z}}(\cdot))$ such that
Then,
\begin{equation}\label{hat-eta}
\left\{
\begin{aligned}
& d\widehat{\eta}(t)=-\Lambda(t)dt+\widehat{\zeta}(t)dW(t)+\widehat{\mathbf{z}}(t)\cdot d\mathbf{\widetilde{N}}(t),\quad t\in[0,T],  \\
&\widehat{\eta}(T)=0,
\end{aligned}
\right.
\end{equation}
where
\begin{align*}
\Lambda&=\big[\widehat{A}_{1}(\alpha)^{\top}-\widehat{\mathcal{L}}_{1}(P,\alpha)\widehat{\mathcal{N}}_{1}(P,\alpha)^{\dag}B_{1}(\alpha)^{\top}\big]\eta_{1}
        +\big[\widehat{C}_{1}(\alpha)^{\top}-\widehat{\mathcal{L}}_{1}(P,\alpha)\widehat{\mathcal{N}}_{1}(P,\alpha)^{\dag}D_{1}(\alpha)^{\top}\big]\zeta_{1}\\
        &\quad+\big[\widehat{C}_{1}(\alpha)^{\top}-\widehat{\mathcal{L}}_{1}(P,\alpha)\widehat{\mathcal{N}}_{1}(P,\alpha)^{\dag}D_{1}(\alpha)^{\top}\big]P(\alpha)\widehat{\sigma}_{1}-
       \widehat{\mathcal{L}}_{1}(P,\alpha)\widehat{\mathcal{N}}_{1}(P,\alpha)^{\dag}\widehat{\rho}_{1}+P(\alpha)\widehat{b}_{1}+\widehat{q}_{1}\\
       &\quad -\Big\{\big[\widehat{A}_{2}(\alpha)^{\top}-\widehat{\mathcal{L}}_{2}(P,\alpha)\widehat{\mathcal{N}}_{2}(P,\alpha)^{\dag}B_{2}(\alpha)^{\top}\big]\eta_{2}
    +\big[\widehat{C}_{2}(\alpha)^{\top}-\widehat{\mathcal{L}}_{2}(P,\alpha)\widehat{\mathcal{N}}_{2}(P,\alpha)^{\dag}D_{2}(\alpha)^{\top}\big]\zeta_{2}\\
        &\quad+\big[\widehat{C}_{2}(\alpha)^{\top}-\widehat{\mathcal{L}}_{2}(P,\alpha)\widehat{\mathcal{N}}_{2}(P,\alpha)^{\dag}D_{2}(\alpha)^{\top}\big]P(\alpha)\widehat{\sigma}_{2}
       -\widehat{\mathcal{L}}_{2}(P,\alpha)\widehat{\mathcal{N}}_{2}(P,\alpha)^{\dag}\widehat{\rho}_{2}+P(\alpha)\widehat{b}_{2}+\widehat{q}_{2}\Big\}\\
    &=A(\alpha)^{\top}\widehat{\eta}+C(\alpha)^{\top}\widehat{\zeta}+\widehat{\Theta}_{2}(\alpha)^{\top}\big[B_{2}(\alpha)^{\top}\eta_{1}+D_{2}(\alpha)^{\top}\zeta_{1}+D_{2}(\alpha)^{\top}P(\alpha)\sigma+\rho_{2}\big]\\
    &\quad -\widehat{\Theta}_{1}(\alpha)^{\top}\big[B_{1}(\alpha)^{\top}\eta_{2}+D_{1}(\alpha)^{\top}\zeta_{2}+D_{1}(\alpha)^{\top}P(\alpha)\sigma+\rho_{1}\big]+\big[\widehat{\Theta}_{2}(\alpha)^{\top}\mathcal{N}_{22}(P,\alpha)+\mathcal{L}_{2}(P,\alpha)\big]\widehat{\nu}_{2}\\
    &\quad-\big[\widehat{\Theta}_{1}(\alpha)^{\top}\mathcal{N}_{11}(P,\alpha)+\mathcal{L}_{1}(P,\alpha)\big]\widehat{\nu}_{1}-\big[\mathcal{L}_{1}(P,\alpha)+\widehat{\Theta}_{2}(\alpha)^{\top}\mathcal{N}_{21}(P,\alpha)\big]\mathcal{N}_{11}(P,\alpha)^{\dag}\big[\mathcal{N}_{12}(P,\alpha)\widehat{\nu}_{2}+\widetilde{\rho}_{1}\big]\\
    &\quad +\big[\mathcal{L}_{2}(P,\alpha)+\widehat{\Theta}_{1}(\alpha)^{\top}\mathcal{N}_{12}(P,\alpha)\big]\mathcal{N}_{22}(P,\alpha)^{\dag}\big[\mathcal{N}_{21}(P,\alpha)\widehat{\nu}_{1}+\widetilde{\rho}_{2}\big]\\
    &=A(\alpha)^{\top}\widehat{\eta}+C(\alpha)^{\top}\widehat{\zeta}+\widehat{\Theta}_{2}(\alpha)^{\top}\big[B_{2}(\alpha)^{\top}\eta_{1}+D_{2}(\alpha)^{\top}\zeta_{1}+D_{2}(\alpha)^{\top}P(\alpha)\sigma+\rho_{2}\big]\\
    &\quad -\widehat{\Theta}_{1}(\alpha)^{\top}\big[B_{1}(\alpha)^{\top}\eta_{2}+D_{1}(\alpha)^{\top}\zeta_{2}+D_{1}(\alpha)^{\top}P(\alpha)\sigma+\rho_{1}\big]
    -\widehat{\Theta}_{1}(\alpha)^{\top}\mathcal{N}_{12}(P,\alpha)\widehat{\nu}_{2}\\
    &\quad+\widehat{\Theta}_{2}(\alpha)^{\top}\mathcal{N}_{21}(P,\alpha)\widehat{\nu}_{1}
    +\widehat{\Theta}_{1}(\alpha)^{\top}\mathcal{N}_{11}(P,\alpha)\mathcal{N}_{11}(P,\alpha)^{\dag}\big[\mathcal{N}_{12}(P,\alpha)\widehat{\nu}_{2}+\widetilde{\rho}_{1}\big]\\
    &\quad -\widehat{\Theta}_{2}(\alpha)^{\top}\mathcal{N}_{22}(P,\alpha)\mathcal{N}_{22}(P,\alpha)^{\dag}\big[\mathcal{N}_{21}(P,\alpha)\widehat{\nu}_{1}+\widetilde{\rho}_{2}\big]\\
    &=\big[A(\alpha)+B(\alpha)^{\top}\widehat{\Theta}(\alpha)\big]^{\top}\widehat{\eta}+\big[C(\alpha)+D(\alpha)^{\top}\widehat{\Theta}(\alpha)\big]^{\top}\widehat{\zeta}.
\end{align*}
Substituting the above result into \eqref{hat-eta} and by the unique solvability of linear BSDE, we have
$$(\widehat{\eta},\widehat{\zeta},\widehat{\mathbf{z}})=(0,0,\mathbf{0}),$$
which implies that
\begin{equation}\label{eta-zeta-z}
  (\eta_{1}(\cdot),\zeta_{1}(\cdot),\widehat{\mathbf{z}}_{1}(\cdot))=(\eta_{2}(\cdot),\zeta_{2}(\cdot),\widehat{\mathbf{z}}_{2}(\cdot))\triangleq (\eta(\cdot),\zeta(\cdot),\widehat{\mathbf{z}}(\cdot)).
\end{equation}
Consequently,  the condition \eqref{ZLQ-eta-closed-constraint} follows from \eqref{ZLQ-eta-k-constraint-2}. Additionally, from \eqref{ZLQ-eta-k},  one has
\begin{equation}\label{ZLQ-eta-2}
       \left\{
      \begin{aligned}
        d\eta(t)&=-\Sigma(t)dt+\zeta(t) dW(t)+\mathbf{z}(t)\cdot d\mathbf{\widetilde{N}}(t),\quad t\in [0,T],\\
        \eta(T)&=g,
      \end{aligned}
      \right.
   \end{equation}
 where
 \begin{align*}
     \Sigma&=\big[\widehat{A}_{1}(\alpha)^{\top}-\widehat{\mathcal{L}}_{1}(P,\alpha)\widehat{\mathcal{N}}_{1}(P,\alpha)^{\dag}B_{1}(\alpha)^{\top}\big]\eta+\big[\widehat{C}_{1}(\alpha)^{\top}-\widehat{\mathcal{L}}_{1}(P,\alpha)\widehat{\mathcal{N}}_{1}(P_{k},\alpha)^{\dag}D_{1}(\alpha)^{\top}\big]\zeta\\
        &\quad+\big[\widehat{C}_{1}(\alpha)^{\top}-\widehat{\mathcal{L}}_{1}(P,\alpha)\widehat{\mathcal{N}}_{1}(P,\alpha)^{\dag}D_{1}(\alpha)^{\top}\big]P(\alpha)\widehat{\sigma}_{1}-\widehat{\mathcal{L}}_{1}(P,\alpha)\widehat{\mathcal{N}}_{1}(P,\alpha)^{\dag}\widehat{\rho}_{1}+P(\alpha)\widehat{b}_{1}+\widehat{q}_{1}\\
        &=\big[A(\alpha)+B_{2}(\alpha)\widehat{\Theta}_{2}(\alpha)\big]^{\top}\eta+\big[C(\alpha)+D_{2}(\alpha)\widehat{\Theta}_{2}(\alpha)\big]^{\top}\zeta+\big[C(\alpha)+D_{2}(\alpha)\widehat{\Theta}_{2}(\alpha)\big]^{\top}P(\alpha)\big[D_{2}(\alpha)\widehat{\nu}_{2}+\sigma\big]\\
        &\quad +P(\alpha)\big[B_{2}(\alpha)\widehat{\nu}_{2}+b\big]+q+\widehat{\Theta}_{2}(\alpha)^{\top}\rho_{2}
        +S_{2}(\alpha)^{\top}\widehat{\nu}_{2}+\widehat{\Theta}_{2}(\alpha)^{\top}R_{22}(\alpha)\widehat{\nu}_{2}\\
        &\quad -\big[\mathcal{L}_{1}(P,\alpha)+\widehat{\Theta}_{2}(\alpha)^{\top}\mathcal{N}_{21}(P,\alpha)\big]\mathcal{N}_{11}(P,\alpha)^{\dag}\big[\widetilde{\rho}_{1}+\mathcal{N}_{12}(P,\alpha)\widehat{\nu}_{2}\big]\\
        &=A(\alpha)^{\top}\eta+C(\alpha)^{\top}\zeta+C(\alpha)^{\top}P(\alpha)\sigma+P(\alpha)b+q+\widehat{\Theta}_{2}(\alpha)^{\top}\widetilde{\rho}_{2}+\mathcal{L}_{2}(P,\alpha)\widehat{\nu}_{2}+\widehat{\Theta}_{2}(\alpha)^{\top}\mathcal{N}_{22}(P,\alpha)\widehat{\nu}_{2}\\
        &\quad+\widehat{\Theta}_{1}(\alpha)^{\top}\big[\widetilde{\rho}_{1}+\mathcal{N}_{12}(P,\alpha)\widehat{\nu}_{2}\big]\\
        &=A(\alpha)^{\top}\eta+C(\alpha)^{\top}\zeta+C(\alpha)^{\top}P(\alpha)\sigma+P(\alpha)b+q+\widehat{\Theta}(\alpha)^{\top}\widetilde{\rho}.
 \end{align*}
 Noting that \eqref{ZLQ-closed-loop-constraint} implies
 $$\widehat{\Theta}(t,i)=-\mathcal{N}(t,P,i)^{\dag}\mathcal{L}(t,P,i)^{\top}+\big[I-\mathcal{N}(t,P,i)^{\dag}\mathcal{N}(t,P,i)\big]\Pi(t,i),\quad i\in\mathcal{S},$$
 for some $\Pi(\cdot,i)\in L^{2}(0,T;\mathbb{R}^{m\times n})$. Combining with the condition $\widetilde{\rho}\in\mathcal{R}(\mathcal{N}(P,\alpha))$, we have
 $$
 \widehat{\Theta}(\alpha)^{\top}\widetilde{\rho}=-\mathcal{N}(P,\alpha)^{\dag}\mathcal{L}(P,\alpha)^{\top}\widetilde{\rho}.
 $$
 Consequently, by the definition of $\widetilde{\rho}(\cdot)$ (see equation \eqref{eta-constraint}), we can further obtain that the $(\eta(\cdot),\zeta(\cdot),\widehat{\mathbf{z}}(\cdot))$ defined in \eqref{eta-zeta-z} also solves BSDE \eqref{eta}.

 Finally, by Theorem \ref{thm-SLQ-closed}, the value function of Problem (M-ZLQ) satisfies
 \begin{equation}\label{value-2}
 \begin{aligned}
     V(x,i)&=J(x,i;\widehat{\mathbf{\Theta}},\widehat{\nu})=\widehat{J}_{1}(x,i;\widehat{\mathbf{\Theta}}_{1},\widehat{\nu}_{1})+\mathbb{E}\int_{0}^{T}\big[\big<R_{22}(\alpha)\widehat{\nu}_{2}+2\rho_{2},\widehat{\nu}_{2}\big>\big]dt\\
     &=\mathbb{E}\Big\{\big<P(0,i)x,x\big>+2\big<\eta(0),x\big>+\int_{0}^{T}\big[\big<P(\alpha)\widehat{\sigma}_{1},\widehat{\sigma}_{1}\big>+2\big<\eta,\widehat{b}_{1}\big>+2\big<\zeta,\widehat{\sigma}_{1}\big>-\big<\mathcal{N}_{11}(P,\alpha)\widehat{\varrho}_{1},\widehat{\varrho}_{1}\big>\\
     &\quad+\big<R_{22}(\alpha)\widehat{\nu}_{2}+2\rho_{2},\widehat{\nu}_{2}\big>
     \big]dt\Big\}\\
     &=\mathbb{E}\Big\{\big<P(0,i)x,x\big>+2\big<\eta(0),x\big>+\int_{0}^{T}\big[\big<P(\alpha)\sigma,\sigma\big>+2\big<\eta,b\big>+2\big<\zeta,\sigma\big>-\big<\mathcal{N}_{11}(P,\alpha)\widehat{\nu}_{1},\widehat{\nu}_{1}\big>\\
     &\quad+\big<\mathcal{N}_{22}(\alpha)\widehat{\nu}_{2},\widehat{\nu}_{2}\big>+2\big<\widetilde{\rho}_{2},\widehat{\nu}_{2}\big>
     \big]dt\Big\}.
 \end{aligned}
 \end{equation}
 Note that
 \begin{align*}
     &\big<\mathcal{N}_{22}(\alpha)\widehat{\nu}_{2},\widehat{\nu}_{2}\big>-\big<\mathcal{N}_{11}(P,\alpha)\widehat{\nu}_{1},\widehat{\nu}_{1}\big>+2\big<\widetilde{\rho}_{2},\widehat{\nu}_{2}\big>\\
     =&\big<\mathcal{N}_{22}(\alpha)\widehat{\nu}_{2},\widehat{\nu}_{2}\big>-\big<\mathcal{N}_{11}(P,\alpha)\widehat{\nu}_{1},\widehat{\nu}_{1}\big>-2\big<\mathcal{N}_{21}(P,\alpha)\widehat{\nu}_{1}+\mathcal{N}_{22}(P,\alpha)\widehat{\nu}_{2},\widehat{\nu}_{2}\big>\\
     =&-\big<\mathcal{N}(P,\alpha)\widehat{\nu},\widehat{\nu}\big>\\
     =&-\big<\mathcal{N}(P,\alpha)\widetilde{\rho},\widetilde{\rho}\big>.
 \end{align*}
 Hence, we complete the proof by substituting the above equation into \eqref{value-2}.
\end{proof}

\begin{remark}\label{rmk-closed-zlq}\rm
Clearly, by the definition of closed-loop solvable for Problem (M-ZLQ) and the basic properties of pseudo-inverse, one can easily verify that Problem (M-ZLQ) is closed-loop solvable  if and only if:
\begin{description}
  \item[(i)] The CDREs \eqref{CDREs} admits a unique solution $\mathbf{P}(\cdot)\in\mathcal{D}\left(C(0,T;\mathbb{S}^{n})\right)$ such that
      \begin{equation}\label{ZLQ-closed-loop-constraint-2}
        \left\{
        \begin{aligned}
        &\mathcal{R}(\mathcal{L}(t;P,i)^{\top})\subseteq \mathcal{R}(\mathcal{N}(t;P,i)), \quad a.e.\text{ }t\in[0,T],\\
       &\mathcal{N}(\cdot;P,i)^{\dag}\mathcal{L}(\cdot;P,i)^{\top}\in L^{2}(0,T;\mathbb{R}^{m\times n}),\\
        &\mathcal{N}_{11}(\cdot;P,i)\geq 0,\quad \mathcal{N}_{22}(\cdot;P,i)\leq 0;
        \end{aligned}
        \right.
      \end{equation}
      \item[(ii)] The adapted solution $(\eta(\cdot),\zeta(\cdot),\mathbf{z}(\cdot))$ to BSDE \eqref{eta} satisfies the condition \eqref{eta-constraint}.
\end{description}
We point out that such a result can be degenerated to the case without regime switching jumps  (see, \citet{Sun.J.R.2014_NILQ}) by setting $\mathcal{S}=\{1\}$.
\end{remark}
The next result directly follows from  Theorem \ref{thm-open-closed}, and it provides a relation between open-loop and closed-loop solvabilities of the Problem (M-ZLQ).

\begin{corollary}\label{coro-closed-open}
 Let assumptions (A1)-(A2) and the convexity-concavity condition \eqref{convex-concave-condition} hold.  If Problem (M-ZLQ) is closed-loop solvable, then Problem (M-ZLQ) is also open-loop solvable. In this case, the outcome of the closed-loop equilibrium strategy is consistent with the closed-loop representation of the open-loop saddle point.
\end{corollary}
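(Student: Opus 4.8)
The plan is to show that the characterization of closed-loop solvability in Remark~\ref{rmk-closed-zlq} supplies exactly the three hypotheses of Theorem~\ref{thm-open-closed}, from which open-loop solvability together with the closed-loop representation \eqref{open-closed-representation}--\eqref{ZLQ-open-closed-representation} follows at once. So the corollary is essentially a bookkeeping exercise matching hypotheses, plus one genuine check for the consistency claim.

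First I would unpack what closed-loop solvability gives. By Remark~\ref{rmk-closed-zlq}, Problem (M-ZLQ) being closed-loop solvable is equivalent to the existence of a unique $\mathbf{P}(\cdot)\in\mathcal{D}(C(0,T;\mathbb{S}^{n}))$ solving the CDREs \eqref{CDREs} and satisfying the range inclusion $\mathcal{R}(\mathcal{L}(t;P,i)^{\top})\subseteq\mathcal{R}(\mathcal{N}(t;P,i))$, the integrability $\mathcal{N}(\cdot;P,i)^{\dag}\mathcal{L}(\cdot;P,i)^{\top}\in L^{2}(0,T;\mathbb{R}^{m\times n})$, and the partial definiteness $\mathcal{N}_{11}\geq 0$, $\mathcal{N}_{22}\leq 0$, together with an adapted solution $(\eta,\zeta,\mathbf{z})$ of the BSDE \eqref{eta} obeying \eqref{eta-constraint}. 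Matching these against Theorem~\ref{thm-open-closed}: hypothesis (i) is the assumed convexity--concavity condition \eqref{convex-concave-condition}; hypothesis (ii) is precisely the range-and-integrability part \eqref{ZLQ-CDREs-open-closed-condition}, which appears verbatim inside \eqref{ZLQ-closed-loop-constraint-2}; and hypothesis (iii) is exactly the constraint \eqref{eta-constraint}. (The definiteness $\mathcal{N}_{11}\geq 0$, $\mathcal{N}_{22}\leq 0$ is not even needed here.) Hence all hypotheses hold, and Theorem~\ref{thm-open-closed} yields, for every $(x,i)\in\mathbb{R}^{n}\times\mathcal{S}$, a solution of the optimality system \eqref{SLQ-FBSDE}--\eqref{SLQ-stationary-condition} and open-loop solvability with saddle point in feedback form \eqref{open-closed-representation}--\eqref{ZLQ-open-closed-representation}.

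Finally, for the consistency assertion I would compare the feedback gains. The open-loop representation \eqref{ZLQ-open-closed-representation} uses $\Theta^{*}=-\mathcal{N}^{\dag}\mathcal{L}^{\top}$ and $\nu^{*}=-\mathcal{N}^{\dag}\widetilde{\rho}$, while the closed-loop equilibrium gains $\widehat{\Theta},\widehat{\nu}$ of Theorem~\ref{thm-ZLQ-closed-loop} are characterized by the linear relations $\mathcal{N}\widehat{\Theta}+\mathcal{L}^{\top}=0$ and $\mathcal{N}\widehat{\nu}+\widetilde{\rho}=0$ of \eqref{ZLQ-closed-loop-constraint} and \eqref{ZLQ-eta-closed-constraint}. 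By Lemma~\ref{lem-Pseudoinverse}(ii), $(\Theta^{*},\nu^{*})$ is precisely the particular pseudo-inverse solution of these equations, so the closed-loop representation is one admissible instance of the closed-loop equilibrium strategy. The only real obstacle is that $\widehat{\Theta}$ is pinned down only up to the kernel term $(I-\mathcal{N}^{\dag}\mathcal{N})\Pi$, so I must verify that the \emph{outcome} control, not merely the formula, is an open-loop saddle point. I would handle this by noting that the range conditions give $\mathcal{N}\mathcal{N}^{\dag}\mathcal{L}^{\top}=\mathcal{L}^{\top}$ and $\mathcal{N}\mathcal{N}^{\dag}\widetilde{\rho}=\widetilde{\rho}$; applying $\mathcal{N}$ to the outcome $\widehat{u}=\widehat{\Theta}X+\widehat{\nu}$ then recovers the same stationary relation $\mathcal{N}\widehat{u}=-\mathcal{L}^{\top}X-\widetilde{\rho}$ satisfied by $u^{*}$, and the Itô computation used in the proof of Theorem~\ref{thm-open-closed} confirms that $\widehat{u}$ solves \eqref{SLQ-FBSDE}--\eqref{SLQ-stationary-condition}, hence is an open-loop saddle point consistent with \eqref{open-closed-representation}.
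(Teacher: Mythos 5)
Your proposal is correct and follows the paper's own route: the paper obtains this corollary directly from Theorem \ref{thm-open-closed} by matching its hypotheses (i)--(iii) against the characterization of closed-loop solvability in Remark \ref{rmk-closed-zlq} under the assumed condition \eqref{convex-concave-condition}, exactly as you do. Your extra verification that the kernel ambiguity in $\widehat{\Theta}(\cdot,i)$ does not affect the outcome control is a point the paper leaves implicit, and your resolution of it is sound, since $\mathcal{N}\widehat{u}=-\mathcal{L}^{\top}X-\widetilde{\rho}$ follows from \eqref{ZLQ-closed-loop-constraint} and \eqref{ZLQ-eta-closed-constraint}, while the range inclusion gives $\mathcal{L}\big(I-\mathcal{N}^{\dag}\mathcal{N}\big)=0$, so the It\^{o} computation in the proof of Theorem \ref{thm-open-closed} goes through verbatim for the outcome $\widehat{u}$.
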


\section{The solvability of CDREs}\label{section-5} %\eqref{CDREs}
%under the condition \textbf{\eqref{uniformly-convex-concave-condition}}}\label{section-5}
%Before we give the solvability of CDREs \eqref{CDREs}, let's do some preparatory work first.
Now, we consider the following constrained CDREs
\begin{equation}\label{ZLQ-CCDREs}
   \left\{
    \begin{aligned}
    &\dot{P}(t,i)+\mathcal{M}(t;P,i)-\mathcal{L}(t;P,i)\mathcal{N}(t;P,i)^{-1}\mathcal{L}(t;P,i)^{\top}=0,\\
    &\mathcal{N}_{11}(t;P,i)\gg 0,\quad \mathcal{N}_{22}(t;P,i)\ll 0,\\
    &P(T,i)=G(i),\quad i\in\mathcal{S},
    \end{aligned}
    \right.
\end{equation}
Clearly, a solution $\mathbf{P}(\cdot)\in\mathcal{D}\left(C(0,T;\mathbb{R}^{n})\right)$ to the above constrained CDREs must be the solution to CDREs \eqref{CDREs} and satisfy the condition \eqref{ZLQ-closed-loop-constraint-2}. In this section, we shall study the solvability of CDREs \eqref{ZLQ-CCDREs}  under the uniform convexity-concavity condition \eqref{uniformly-convex-concave-condition}.

Let
\begin{equation}\label{convex-concave-cost}
\begin{aligned}
  &\widetilde{J}_{1}(x,i;u_{1})\triangleq \mathbb{E}\Big\{\int_{0}^{T}\left<\left(\begin{matrix}
Q(\alpha) & S_{1}(\alpha)^{\top}\\
S_{1}(\alpha) & R_{11}(\alpha)\end{matrix}\right)\left(\begin{matrix}
\widetilde{X}_{1}\\u_{1}\end{matrix}\right),
\left(\begin{matrix}
\widetilde{X}_{1}\\u_{1}\end{matrix}\right)\right>dt
 +\big<G(\alpha(T))\widetilde{X}_{1}(T), \widetilde{X}_{1}(T)\big>\Big\},\\
 &\widetilde{J}_{2}(x,i;u_{2})\triangleq \mathbb{E}\Big\{\int_{0}^{T}\left<\left(\begin{matrix}
Q(\alpha) & S_{2}(\alpha)^{\top}\\
S_{2}(\alpha) & R_{22}(\alpha)\end{matrix}\right)\left(\begin{matrix}
\widetilde{X}_{2}\\u_{2}\end{matrix}\right),
\left(\begin{matrix}
\widetilde{X}_{2}\\u_{2}\end{matrix}\right)\right>dt
 +\big<G(\alpha(T))\widetilde{X}_{2}(T), \widetilde{X}_{2}(T)\big>\Big\},
 \end{aligned}
\end{equation}
with $\widetilde{X}_{1}(\cdot)\equiv X^{0}(\cdot;x,i,u_{1},0)$ and $\widetilde{X}_{2}(\cdot)\equiv X^{0}(\cdot;x,i,0,u_{2})$.
%We denote the SLQ control problem with state process $\widetilde{X}_{k}(\cdot)$ and cost functional $\widetilde{J}_{k}(x,i;u_{k})$ as Problem (M-SLQ)$_{k}$.
Then the condition \eqref{uniformly-convex-concave-condition} can be rewritten as
\begin{equation}\label{uniformly-convex-concave-condition-2}
  \left\{
  \begin{array}{lll}
  \widetilde{J}_{1}(0,i;u_{1})\geq \lambda \mathbb{E}\int_{0}^{T}\big|u_{1}(t)\big|^{2}dt,&\forall u_{1}(\cdot)\in\mathcal{U}_{1},&\forall i\in\mathcal{S},\\[2mm]
  \widetilde{J}_{2}(0,i;u_{2})\leq -\lambda \mathbb{E}\int_{0}^{T}\big|u_{2}(t)\big|^{2}dt,&\forall u_{2}(\cdot)\in\mathcal{U}_{2},&\forall i\in\mathcal{S}.
  \end{array}
  \right.
\end{equation}
By Lemma \ref{thm-SLQ-uniform-convex}, we can immediately obtain that the following CDREs
\begin{equation}\label{section-5-CDREs-k}
   \left\{
    \begin{aligned}
    &\dot{P}_{k}(t,i)+\mathcal{M}(t;P_{k},i)-\mathcal{L}_{k}(t;P_{k},i)\mathcal{N}_{kk}(t;P_{k},i)^{-1}\mathcal{L}_{k}(t;P_{k},i)^{\top}=0,\\
    &P_{k}(T,i)=G(i),\quad i\in\mathcal{S},
    \end{aligned}
    \right.
\end{equation}
admits a unique solution $\mathbf{P_{k}}(\cdot)\in\mathcal{D}(C(0,T;\mathbb{S}^{n}))$ such that
$$
(-1)^{k+1}\mathcal{N}_{kk}(t;P_{k},i)\geq 0,\quad i\in\mathcal{S},\quad k=1,2.
$$

The following comparison result establishes the relation between $\mathbf{P}(\cdot)$ and $\mathbf{P_{k}}(\cdot)$ for $k=1,2$.

\begin{proposition}\label{prop-comparison}
Let assumptions (A1)-(A2) and condition \eqref{uniformly-convex-concave-condition} hold. Suppose that $\mathbf{P}(\cdot)$ is a solution of CDREs \eqref{ZLQ-CCDREs} over some interval $[\theta,\tau]\subseteq [0,T]$ with terminal condition replaced by $P(\tau,i)=H(i)\in\mathbb{S}^{n}$ for $i\in\mathcal{S}$. If $P_{1}(\tau,i)\leq H(i)\leq P_{2}(\tau,i)$ for every $i\in\mathcal{S}$, then
$$P_{1}(t,i)\leq P(t,i)\leq P_{2}(t,i),\quad t\in[\theta,\tau],\quad i\in\mathcal{S}.$$
\end{proposition}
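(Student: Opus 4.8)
The plan is to establish the two one–sided bounds $P(t,i)\le P_2(t,i)$ and $P_1(t,i)\le P(t,i)$ separately; since the two players enter \eqref{ZLQ-CCDREs} symmetrically (with the sign pattern $\mathcal N_{11}\gg0$, $\mathcal N_{22}\ll0$ reversed), it suffices to carry out the upper bound $P\le P_2$ in full and obtain the lower bound by mirroring. Throughout I use the block decomposition \eqref{ZLQ-notation-LN}, $\mathcal L=(\mathcal L_1,\mathcal L_2)$ and $\mathcal N=\big(\begin{smallmatrix}\mathcal N_{11}&\mathcal N_{12}\\ \mathcal N_{21}&\mathcal N_{22}\end{smallmatrix}\big)$, and abbreviate the player-$2$ Riccati operator in \eqref{section-5-CDREs-k} as $\mathcal R_2(P;i):=\mathcal M(t;P,i)-\mathcal L_2(t;P,i)\mathcal N_{22}(t;P,i)^{-1}\mathcal L_2(t;P,i)^{\top}$. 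The first step is to turn $P$ into a \emph{supersolution} of player $2$'s equation. Because $\mathbf P$ solves \eqref{ZLQ-CCDREs}, $\mathcal N_{11}\gg0$ and $\mathcal N_{22}\ll0$, so $\mathcal N$ is invertible and the Schur complement $\Phi_2:=\mathcal N_{11}-\mathcal N_{12}\mathcal N_{22}^{-1}\mathcal N_{21}=\mathcal N_{11}+\mathcal N_{12}(-\mathcal N_{22})^{-1}\mathcal N_{21}\gg0$. Applying the identity in Lemma \ref{lem-matrix} with $(\mathbb M,\mathbb L,\mathbb N)=(\mathcal N_{22},\mathcal N_{21},\mathcal N_{11})$ and $(\theta,\xi)=(\mathcal L_2^{\top},\mathcal L_1^{\top})$ gives
$$\mathcal L\mathcal N^{-1}\mathcal L^{\top}=\mathcal L_2\mathcal N_{22}^{-1}\mathcal L_2^{\top}+(\mathcal L_1-\mathcal L_2\mathcal N_{22}^{-1}\mathcal N_{21})\Phi_2^{-1}(\mathcal L_1-\mathcal L_2\mathcal N_{22}^{-1}\mathcal N_{21})^{\top}\ \ge\ \mathcal L_2\mathcal N_{22}^{-1}\mathcal L_2^{\top},$$
since $\Phi_2^{-1}\gg0$. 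Substituting this into the equation for $P$ in \eqref{ZLQ-CCDREs} yields $\dot P(t,i)+\mathcal R_2(P;i)\ge0$, while $P_2$ solves $\dot P_2+\mathcal R_2(P_2;\cdot)=0$ with $P(\tau,i)=H(i)\le P_2(\tau,i)$ at the right endpoint.

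Next I reduce the comparison to a linear (Lyapunov-type) differential inequality. Set $\Delta:=P_2-P$, so $\Delta(\tau,i)\ge0$, and let $\Psi(\cdot,i):=-\mathcal N_{22}(\cdot;P,i)^{-1}\mathcal L_2(\cdot;P,i)^{\top}\in L^{2}(0,T;\mathbb R^{m_2\times n})$ be the maximizing feedback associated with $P$. Completion of squares gives $\mathcal R_2(P;i)=\mathcal G(P,\Psi;i)$, where $\mathcal G(P,\Psi;i)$ is the closed-loop generator built from $\widehat A=A+B_2\Psi$, $\widehat C=C+D_2\Psi$; and since $\mathcal N_{22}\ll0$ makes $\mathcal G(P_2,\cdot\,;i)$ concave in the feedback, $\mathcal R_2(P_2;i)=\max_{\Xi}\mathcal G(P_2,\Xi;i)\ge\mathcal G(P_2,\Psi;i)$. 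As $\mathcal G(\cdot,\Psi;i)$ is affine in its first argument with linear part the Lyapunov operator $\mathscr L_\Psi[\Delta](t,i)=\Delta\widehat A+\widehat A^{\top}\Delta+\widehat C^{\top}\Delta\widehat C+\sum_{j}\pi_{ij}\Delta(\cdot,j)$, combining the supersolution property with the above two facts produces
$$\dot\Delta(t,i)+\mathscr L_\Psi[\Delta](t,i)\le0,\qquad \Delta(\tau,i)\ge0,\qquad i\in\mathcal S.$$

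To conclude positivity I use a probabilistic representation. Fix $(t,i,x)$ and let $\phi$ solve $d\phi=\widehat A(s,\alpha)\phi\,ds+\widehat C(s,\alpha)\phi\,dW$ on $[t,\tau]$ with $\phi(t)=x$, $\alpha(t)=i$. Applying the It\^o formula for the Markov-modulated diffusion to $s\mapsto\langle\Delta(s,\alpha(s))\phi(s),\phi(s)\rangle$, the drift is exactly $\langle(\dot\Delta+\mathscr L_\Psi[\Delta])\phi,\phi\rangle$ and the Brownian and $\widetilde{\mathbf N}$ parts are martingales, so taking expectations gives $\langle\Delta(t,i)x,x\rangle=\mathbb E\langle\Delta(\tau,\alpha(\tau))\phi(\tau),\phi(\tau)\rangle-\mathbb E\int_t^{\tau}\langle(\dot\Delta+\mathscr L_\Psi[\Delta])\phi,\phi\rangle\,ds\ge0$ by the inequality above and $\Delta(\tau,\cdot)\ge0$. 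As $x$ is arbitrary, $\Delta(t,i)\ge0$ on $[\theta,\tau]$, i.e. $P\le P_2$. The lower bound $P_1\le P$ is the mirror argument: the same identity of Lemma \ref{lem-matrix}, now taken with respect to the $\mathcal N_{11}\gg0$ block, yields $\mathcal L\mathcal N^{-1}\mathcal L^{\top}\le\mathcal L_1\mathcal N_{11}^{-1}\mathcal L_1^{\top}$, so $P$ is a \emph{subsolution} of player $1$'s equation \eqref{section-5-CDREs-k}; convexity in the feedback ($\mathcal N_{11}\gg0$) and the $\min$-characterization then give $\dot\Delta'+\mathscr L_{\Psi'}[\Delta']\le0$ for $\Delta':=P-P_1$ with $\Delta'(\tau,i)\ge0$, and the identical representation yields $\Delta'\ge0$.

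The main obstacle is Step 1 together with the sign bookkeeping in Step 2: the entire argument rests on the block sign pattern $\mathcal N_{11}\gg0$, $\mathcal N_{22}\ll0$ supplied by \eqref{ZLQ-CCDREs}, which makes the two Schur complements $\Phi_1,\Phi_2$ definite of opposite signs, so that the single quadratic term $\mathcal L\mathcal N^{-1}\mathcal L^{\top}$ is simultaneously bounded below by $\mathcal L_2\mathcal N_{22}^{-1}\mathcal L_2^{\top}$ and above by $\mathcal L_1\mathcal N_{11}^{-1}\mathcal L_1^{\top}$; one must also verify that the feedback $\Psi$ (respectively $\Psi'$) is genuinely \emph{optimal in the correct concave (respectively convex) direction} so that the forcing term of the Lyapunov inequality retains a definite sign. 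By contrast, the positivity conclusion of Step 3 is routine once the Markov generator term $\sum_j\pi_{ij}\Delta(\cdot,j)$ is correctly incorporated into the It\^o formula.
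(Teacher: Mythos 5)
Your proof is correct, and it shares the paper's key algebraic step but finishes by a genuinely different mechanism. Both arguments rest on the Schur-complement identity of Lemma \ref{lem-matrix}, which under the sign pattern $\mathcal{N}_{11}\gg 0$, $\mathcal{N}_{22}\ll 0$ pins $\mathcal{L}\mathcal{N}^{-1}\mathcal{L}^{\top}$ between $\mathcal{L}_{2}\mathcal{N}_{22}^{-1}\mathcal{L}_{2}^{\top}$ and $\mathcal{L}_{1}\mathcal{N}_{11}^{-1}\mathcal{L}_{1}^{\top}$ (in the paper this appears as the computation showing $\Delta(t;P,i)\le 0$). From there the routes diverge. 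The paper works with $\Xi=P-P_{1}$ directly and, after rewriting $\mathcal{L}_{1}(t;P,i)\mathcal{N}_{11}(t;P,i)^{-1}\mathcal{L}_{1}(t;P,i)^{\top}$ as a quadratic expression in $\Xi$, observes that $\Xi$ itself solves a CDRE whose data satisfy the standard condition of Lemma \ref{thm-SLQ-uniform-convex} (state weight $\mathcal{L}_{1}(\cdot;P_{1},i)\mathcal{N}_{11}(\cdot;P_{1},i)^{-1}\mathcal{L}_{1}(\cdot;P_{1},i)^{\top}-\Delta(\cdot;P,i)\ge 0$, control weight $\mathcal{N}_{11}(\cdot;P_{1},i)\gg 0$, the combination $Q-S^{\top}R^{-1}S$ reducing to $-\Delta(\cdot;P,i)\ge 0$, and terminal value $H(i)-P_{1}(\tau,i)\ge 0$), and then invokes the last assertion of that lemma to conclude $\Xi\ge 0$. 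You instead recast $P$ as a super-/subsolution of the single-player equations \eqref{section-5-CDREs-k}, freeze the optimizing feedback $\Psi$ associated with $P$, linearize the difference into a Lyapunov differential inequality, and propagate nonnegativity through an It\^o representation along the Markov-modulated closed-loop flow. What each buys: the paper's route is shorter because the positivity propagation is outsourced to the already-established Lemma \ref{thm-SLQ-uniform-convex}, whereas yours is self-contained (only Lemma \ref{lem-matrix}, completion of squares, and It\^o's formula), is the classical Riccati comparison argument, and never requires recognizing the transformed equation as a standard-condition Riccati equation. Two small points you should make explicit: the max-characterization $\mathcal{R}_{2}(P_{2};i)=\max_{\Xi}\mathcal{G}(P_{2},\Xi;i)$ needs $\mathcal{N}_{22}(\cdot;P_{2},i)\le 0$, which is exactly what Lemma \ref{thm-SLQ-uniform-convex} guarantees for the solution of \eqref{section-5-CDREs-k}; and the feedback $\Psi=-\mathcal{N}_{22}(\cdot;P,\cdot)^{-1}\mathcal{L}_{2}(\cdot;P,\cdot)^{\top}$ is in fact bounded (not merely $L^{2}$) on $[\theta,\tau]$, by the constraint $\mathcal{N}_{22}(\cdot;P,i)\ll 0$ in \eqref{ZLQ-CCDREs} together with the continuity of $P$ and the boundedness of the coefficients; this is what makes the closed-loop SDE well-posed and turns the local martingales in your representation into true martingales.
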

\begin{proof}
Let
$$\Xi(t,i)\triangleq P(t,i)-P_{1}(t,i),\quad t\in[\theta,\tau].$$
  Then $\Xi(\tau,i)\geq 0$ and
  \begin{align*}
0&=\dot{\Xi}(t,i)+\Xi(t,i)A(t,i)+A(t,i)^{\top}\Xi(t,i)+C(t,i)^{\top}\Xi(t,i)C(t,i)+\sum_{j=1}^{L}\pi_{ij}(t)\Xi(t,j)\\
&\quad+\big[\mathcal{L}_{1}(t;P_{1},i)\mathcal{N}_{11}(t;P_{1},i)^{-1}\mathcal{L}_{1}(t;P_{1},i)^{\top}-\Delta(t;P,i)\big]
-\mathcal{L}_{1}(t;P,i)\mathcal{N}_{11}(t;P,i)^{-1}\mathcal{L}_{1}(t;P,i)^{\top},
  \end{align*}
  where
%  Note that
%  \begin{align*}
%   &\quad \mathcal{L}(t;P,i)\mathcal{N}(t;P,i)^{-1}\mathcal{L}(t;P,i)^{\top}\\
%   &=\left[\begin{matrix}\mathcal{L}_{1}(t;P,i) & \mathcal{L}_{2}(t;P,i)\end{matrix}\right]
%   \left[\begin{matrix}
%   \mathcal{N}_{11}(t;P,i) &\mathcal{N}_{12}(t;P,i)\\
%   \mathcal{N}_{21}(t;P,i)&\mathcal{N}_{22}(t;P,i)\end{matrix}\right]^{-1}
%   \left[\begin{matrix}\mathcal{L}_{1}(t;P,i)^{\top} \\ \mathcal{L}_{2}(t;P,i)^{\top}\end{matrix}\right]\\
%   &=\mathcal{L}_{1}(t;P,i)\mathcal{N}_{11}(t;P,i)^{-1}\mathcal{L}_{1}(t;P,i)^{\top}+\Delta(t;P,i),
%  \end{align*}
%  where
  \begin{align*}
  \Delta(t;P,i)&=\mathcal{L}(t;P,i)\mathcal{N}(t;P,i)^{-1}\mathcal{L}(t;P,i)^{\top}-\mathcal{L}_{1}(t;P,i)\mathcal{N}_{11}(t;P,i)^{-1}\mathcal{L}_{1}(t;P,i)^{\top}\\
  &=\big[\mathcal{N}_{21}(t;P,i)\mathcal{N}_{11}(t;P,i)^{-1}\mathcal{L}_{1}(t;P,i)^{\top}
  -\mathcal{L}_{2}(t;P,i)^{\top}\big]^{\top}\\
  &\quad\times \big[\mathcal{N}_{22}(t;P,i)
  -\mathcal{N}_{21}(t;P,i)\mathcal{N}_{11}(t;P,i)^{-1}\mathcal{N}_{12}(t;P,i)\big]^{-1}\\
  &\quad \times\big[\mathcal{N}_{21}(t;P,i)\mathcal{N}_{11}(t;P,i)^{-1}\mathcal{L}_{1}(t;P,i)^{\top}
  -\mathcal{L}_{2}(t;P,i)^{\top}\big]\leq 0,\quad t\in[\theta,\tau],\quad i\in\mathcal{S}.
  \end{align*}
 It follows from $\mathcal{N}_{11}(\cdot;P_{1},i)\gg 0$ that
  $$
  \mathcal{L}_{1}(t;P_{1},i)\mathcal{N}_{11}(t;P_{1},i)^{-1}\mathcal{L}_{1}(t;P_{1},i)^{\top}-\Delta(t;P,i)\geq 0,\quad a.e.\text{ }t\in[\theta.\tau].
  $$
 In addition, by some straightforward calculations, one has
   \begin{align*}
 &\quad \mathcal{L}_{1}(t;P,i)\mathcal{N}_{11}(t;P,i)^{-1}\mathcal{L}_{1}(t;P,i)^{\top}\\
 &=\big[\Xi(t,i)B_{1}(t,i)+C(t,i)^{\top}\Xi(t,i)D_{1}(t,i)+\mathcal{L}_{1}(t;P_{1},i)\big]
 \big[\mathcal{N}_{11}(t;P_{1},i)+D_{1}(t,i)^{\top}\Xi(t,i)D_{1}(t,i)\big]^{-1}\\
&\quad\times \big[\Xi(t,i)B_{1}(t,i)+C(t,i)^{\top}\Xi(t,i)D_{1}(t,i)+\mathcal{L}_{1}(t;P_{1},i)\big]^{\top}.
   \end{align*}
Then by Lemma \ref{thm-SLQ-uniform-convex}, we obtain that $\Xi(t,i)\geq 0$ for all $t\in[\theta,\tau]$
 and $i\in\mathcal{S}$, which is equivalent to $P_{1}(t,i)\leq P(t,i)$. One can prove that $P(t,i)\leq P_{2}(t,i)$
 for all $t\in[\theta,\tau]$ and $i\in\mathcal{S}$ in a similar manner.
\end{proof}

The following result provides the local solvability of CDREs \eqref{ZLQ-CCDREs}.

\begin{proposition}\label{prop-local-solvability}
  Let assumptions (A1)-(A2) and condition \eqref{uniformly-convex-concave-condition} hold, and let $\mathbf{P_{k}}(\cdot)$ be the solution of CDREs \eqref{section-5-CDREs-k}. For $\tau\in(0,T]$ and $\mathbf{H}\in\mathcal{D}(\mathbb{S}^{n})$, if
  $$P_{1}(\tau,i)\leq H(i)\leq P_{2}(\tau,i),\quad i\in\mathcal{S},$$
  then there exist a constant $\varepsilon>0$ such that the CDREs:
  \begin{equation}\label{ZLQ-CCDREs-2}
   \left\{
    \begin{aligned}
    &\dot{P}(t,i)+\mathcal{M}(t;P,i)-\mathcal{L}(t;P,i)\mathcal{N}(t;P,i)^{-1}\mathcal{L}(t;P,i)^{\top}=0,\\
    &P(\tau,i)=H(i),\quad i\in\mathcal{S},
    \end{aligned}
    \right.
\end{equation}
admits a solution $\mathbf{P}(\cdot)$ on the interval $[\tau-\varepsilon,\tau]$, which also satisfies the condition
\begin{equation}\label{ZLQ-CCDREs-2-condition}
  \mathcal{N}_{11}(t;P,i)\gg 0,\quad \mathcal{N}_{22}(t;P,i)\ll 0,\quad t\in[\tau-\varepsilon,\tau],\quad i\in\mathcal{S}.
\end{equation}
\end{proposition}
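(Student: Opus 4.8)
The plan is to prove this by a contraction-mapping argument applied to the integral form of the Riccati system \eqref{ZLQ-CCDREs-2}, the only genuine difficulty being to secure \emph{uniform} invertibility of $\mathcal{N}(t;P,i)$ on a full neighbourhood of the terminal data $(\tau,\mathbf{H})$ despite the coefficients being merely $L^{\infty}$ in $t$. First I would record the two a priori definiteness estimates coming from Lemma \ref{thm-SLQ-uniform-convex} applied to $\widetilde{J}_{1}$ and $\widetilde{J}_{2}$ under \eqref{uniformly-convex-concave-condition-2}: there is a constant $\delta>0$ such that the (continuous) solutions $\mathbf{P}_{1},\mathbf{P}_{2}$ of \eqref{section-5-CDREs-k} satisfy
$$\mathcal{N}_{11}(t;P_{1},i)\geq \delta I,\qquad \mathcal{N}_{22}(t;P_{2},i)\leq -\delta I,\quad a.e.\ t\in[0,T],\ i\in\mathcal{S}.$$
These a.e.\ bounds, together with the ordering hypothesis $P_{1}(\tau,i)\leq H(i)\leq P_{2}(\tau,i)$ and the continuity of $\mathbf{P}_{1},\mathbf{P}_{2}$, are exactly what will propagate the definiteness from $t=\tau$ into a short interval.

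Next, on a ball $B_{r}(\mathbf{H})=\{\mathbf{P}\in\mathcal{D}(C([\tau-\varepsilon,\tau];\mathbb{S}^{n}))\mid |P(t,i)-H(i)|\leq r\}$ I would prove uniform invertibility. Writing
$$\mathcal{N}_{11}(t;P,i)=\mathcal{N}_{11}(t;P_{1},i)+D_{1}(t,i)^{\top}\big[P(t,i)-P_{1}(t,i)\big]D_{1}(t,i)$$
and splitting $P(t,i)-P_{1}(t,i)=\big(P(t,i)-H(i)\big)+\big(H(i)-P_{1}(\tau,i)\big)+\big(P_{1}(\tau,i)-P_{1}(t,i)\big)$, the middle term contributes a nonnegative matrix because $H(i)\geq P_{1}(\tau,i)$, the first is controlled by $r$, and the last tends to $0$ as $t\to\tau$ by continuity of $\mathbf{P}_{1}$ (here $D_{1}$ is bounded). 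Choosing $r$ and $\varepsilon$ small therefore yields $\mathcal{N}_{11}(t;P,i)\geq \tfrac{\delta}{3}I\gg 0$ for all $\mathbf{P}\in B_{r}(\mathbf{H})$ and a.e.\ $t\in[\tau-\varepsilon,\tau]$, and symmetrically $\mathcal{N}_{22}(t;P,i)\leq -\tfrac{\delta}{3}I\ll 0$ using $H(i)\leq P_{2}(\tau,i)$ and continuity of $\mathbf{P}_{2}$. Applying Lemma \ref{lem-matrix} to the Schur complement $\mathcal{N}_{22}-\mathcal{N}_{21}\mathcal{N}_{11}^{-1}\mathcal{N}_{12}\leq \mathcal{N}_{22}\leq -\tfrac{\delta}{3}I$ then shows $\mathcal{N}(t;P,i)$ is invertible with inverse bounded uniformly on $B_{r}(\mathbf{H})\times[\tau-\varepsilon,\tau]$.

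With uniform invertibility in hand, the remaining steps are routine. On this region the right-hand side $F(t,P,i)=-\mathcal{M}(t;P,i)+\mathcal{L}(t;P,i)\mathcal{N}(t;P,i)^{-1}\mathcal{L}(t;P,i)^{\top}$ is measurable in $t$, Lipschitz in $P$ (the map $P\mapsto\mathcal{N}^{-1}$ being smooth where $\mathcal{N}$ is uniformly invertible), and bounded by an $L^{\infty}$ function, with all constants uniform in $t$ since the data are bounded. I would then consider the operator $(\mathcal{T}\mathbf{P})(t,i)=H(i)+\int_{t}^{\tau}F(s,P(s,i),i)\,ds$ on $C([\tau-\varepsilon',\tau];\mathcal{D}(\mathbb{S}^{n}))$ and, after shrinking $\varepsilon'\leq\varepsilon$ so that $\mathcal{T}$ maps $B_{r}(\mathbf{H})$ into itself and is a contraction there, invoke the contraction mapping theorem to obtain a unique $\mathbf{P}(\cdot)\in B_{r}(\mathbf{H})$ solving \eqref{ZLQ-CCDREs-2} on $[\tau-\varepsilon',\tau]$. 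Since this solution remains in $B_{r}(\mathbf{H})$, the definiteness bounds of the previous paragraph are precisely \eqref{ZLQ-CCDREs-2-condition}, which completes the argument.

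The hard part will be the second step: a crude continuity-in-$t$ argument fails because $R_{kl}(\cdot,i)$ is only $L^{\infty}$, so $\mathcal{N}_{11}(\cdot;P,i)$ need not be continuous and one cannot simply transfer positivity at $\tau$ to nearby times. The decomposition above circumvents this by reducing everything to the continuous objects $\mathbf{P}_{1},\mathbf{P}_{2}$ and to the a.e.\ bounds supplied by Lemma \ref{thm-SLQ-uniform-convex}, with the sign information $H-P_{1}(\tau)\geq 0$ and $P_{2}(\tau)-H\geq 0$ absorbing the (possibly large) gap between $\mathbf{H}$ and $\mathbf{P}_{k}(\tau)$. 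I note that the dimension-extension and permutation devices mentioned in the introduction are not needed here; they enter only in the subsequent global continuation that glues such local solutions together.
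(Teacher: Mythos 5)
Your proof is correct, and its analytic core coincides with the paper's: the same a.e.\ bounds $\mathcal{N}_{11}(\cdot;P_{1},i)\geq\beta I$, $\mathcal{N}_{22}(\cdot;P_{2},i)\leq-\beta I$ from Lemma \ref{thm-SLQ-uniform-convex}, the same three-term decomposition exploiting the ordering $P_{1}(\tau,i)\leq H(i)\leq P_{2}(\tau,i)$ together with the \emph{continuity} of $\mathbf{P}_{1},\mathbf{P}_{2}$ to propagate definiteness onto $[\tau-\varepsilon,\tau]\times\mathcal{B}_{r}(\mathbf{H})$ (this is exactly the paper's estimates \eqref{p-1}--\eqref{p-2}), the same use of Lemma \ref{lem-matrix} on the Schur complement $\Phi=\mathcal{N}_{22}-\mathcal{N}_{21}\mathcal{N}_{11}^{-1}\mathcal{N}_{12}\leq\mathcal{N}_{22}$ to get a uniform bound on $\mathcal{N}^{-1}$, and the same contraction-mapping conclusion. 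The one organizational difference is that the paper first performs the dimension extension: it stacks the regimes into block-diagonal matrices on $\mathbb{S}^{nL}$ and encodes the coupling $\sum_{j}\pi_{ij}(t)P(t,j)$ via permutation matrices $\mathcal{T}_{j}$, so that \eqref{ZLQ-CCDREs-2} becomes a single Riccati-type ODE \eqref{ZLQ-CCDREs-3} before the estimates are run; you instead work directly on the product space $\mathcal{D}\big(C([\tau-\varepsilon,\tau];\mathbb{S}^{n})\big)$, treating the coupling as a bounded linear (hence Lipschitz) part of the generator. Both routes are valid --- the extension is a notational convenience, not a logical necessity --- but note that your closing remark has the paper's architecture backwards: the permutation-matrix device is used precisely in this local solvability proof, while the global continuation argument of Theorem \ref{thm-ZLQ-CDREs-solvability} uses only the comparison result (Proposition \ref{prop-comparison}) and the present proposition, with no dimension extension. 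One small notational point: since $\mathcal{M}(t;P,i)$ contains $\sum_{j}\pi_{ij}(t)P(t,j)$, your generator should be written $F_{i}(s,\mathbf{P}(s))$ rather than $F(s,P(s,i),i)$; the fixed-point operator acts on the full vector $\mathbf{P}$, though this changes nothing in the Lipschitz estimate or the contraction.
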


\begin{proof}
By a slight abuse of notation, let
\begin{align*}
&H=\text{diag}\{H(1),H(2),\cdots,H(L)\},\\
&A(\cdot)=\text{diag}\{A(\cdot,1)+\frac{1}{2}\pi_{11}(\cdot)I, A(\cdot,2)+\frac{1}{2}\pi_{22}(\cdot)I, \cdots , A(\cdot,L)+\frac{1}{2}\pi_{LL}(\cdot)I\},\\
&\Lambda(\cdot)=\text{diag}\{\Lambda(\cdot,1), \cdots,\Lambda(\cdot,L),\},\quad \Lambda=B_{k},\text{ }B,\text{ }C,\text{ }D_{k},\text{ }D,\text{ }Q,\text{ }S_{k},\text{ }S,\text{ }R_{kl},\text{ }R,\text{ }P_{k},\text{ }P,\quad k,l\in\{1,2\},
\end{align*}
and for any $P\in\mathbb{S}^{nL}$,
\begin{align*}
  &\mathcal{L}_{k}(\cdot,P) = PB_{k}(\cdot)+C(\cdot)^{\top}PD(\cdot)+S_{k}(\cdot)^{\top}, \quad
  \mathcal{N}_{kl}(\cdot,P)=  D_{k}(\cdot)^{\top}PD_{l}(\cdot)+R_{kl}(\cdot),\\
  & \mathcal{L}(\cdot,P) = PB(\cdot)+C(\cdot)^{\top}PD(\cdot)+S(\cdot)^{\top}, \quad
  \mathcal{N}(\cdot,P)=  D(\cdot)^{\top}PD(\cdot)+R(\cdot),
\end{align*}
Then the CDREs \eqref{ZLQ-CCDREs-2} can be rewritten as
\begin{equation}\label{ZLQ-CCDREs-3}
\left\{
\begin{aligned}
-\dot{P}(t)&=P(t)A(t)+A(t)P(t)+C(t)^{\top}P(t)C(t)+Q(t)+\sum_{j=1}^{L-1}\mathcal{T}_{j}(t)P(t)\mathcal{T}_{j}(t)^{\top}\\
&\quad -\mathcal{L}(t,P(t))\mathcal{N}(t,P(t))^{-1}\mathcal{L}(t,P(t))^{\top},\\
P(\tau)&=H,
\end{aligned}
\right.
\end{equation}
where
%\begin{align*}
%  \mathcal{L}(t,P) = P(t)B(t)+C(t)^{\top}P(t)D(t)+S(t)^{\top}, \quad
%  \mathcal{N}(t,P)=  D(t)^{\top}P(t)D(t)+R(t),
%\end{align*}
%and
$\mathcal{T}_{j}$ is the permutation matrix. For example, when $L = 4$,
\begin{align*}
&\mathcal{T}_{1}(t)=\left[\begin{matrix}
0     &     \sqrt{\pi_{12}(t)}I     &     0     &     0\\
0     &     0     &     \sqrt{\pi_{23}(t)}I     &     0\\
0     &     0     &     0     &     \sqrt{\pi_{34}(t)}I\\
\sqrt{\pi_{41}(t)}I    &     0     &    0     &     0\\
\end{matrix}\right]\\
&\mathcal{T}_{2}(t)=\left[\begin{matrix}
0     &     0     &     \sqrt{\pi_{13}(t)}I     &     0\\
0     &     0     &     0     &     \sqrt{\pi_{24}(t)}I\\
\sqrt{\pi_{31}(t)}I     &     0     &     0     &     0\\
0   &     \sqrt{\pi_{42}(t)}I     &    0     &     0\\
\end{matrix}\right]\\
&\mathcal{T}_{3}(t)=\left[\begin{matrix}
0     &     0     &     0     &     \sqrt{\pi_{14}(t)}I\\
\sqrt{\pi_{21}(t)}I     &     0     &     0     &     0\\
0     &     \sqrt{\pi_{32}(t)}I     &     0     &     0\\
0    &     0     &    \sqrt{\pi_{43}(t)}I     &     0\\
\end{matrix}\right].
\end{align*}

We define
\begin{align*}
&\mathcal{B}_{r}(H)=\big\{\Lambda\in \mathbb{S}^{nL}\big| |\Lambda-H|\leq r\big\},\quad r>0,\\
&F(t,P)=\mathcal{M}(t,P)-\mathcal{L}(t,P)\mathcal{N}(t,P)^{-1}\mathcal{L}(t,P)^{\top},
\end{align*}
where
$$\mathcal{M}(t,P)\triangleq PA(t)+A(t)P+C(t)^{\top}PC(t)+Q(t)+\sum_{j=1}^{L-1}\mathcal{T}_{j}(t)P\mathcal{T}_{j}(t)^{\top}.$$
In the following, we first prove that the generator function $F(t,P)$ is Lipschitz continuous in $P$ on $[\tau-\delta,\tau]\times \mathcal{B}_{r}(H)$ for some $\delta,\text{ } r>0$.% for proving that the CDREs \eqref{ZLQ-CCDREs-3} is locally solvable at $\tau$.
%To prove that the CDREs \eqref{ZLQ-CCDREs-3} is locally solvable at $\tau$, we only need to verify that the generator function $F(t,P)$ is Lipschitz continuous in $P$ on $[\tau-\delta,\tau]\times \mathcal{B}_{r}(H)$ for some $\delta,\text{ } r>0$. where $\mathcal{B}_{r}(H)$ is the closed ball in $\mathbb{S}^{nL}$ with center $H$ and radius $r$.

By Lemma \ref{thm-SLQ-uniform-convex}, one can select a constant $\beta>0$ such that
\begin{equation}\label{a.e.}
\mathcal{N}_{11}(t,P_{1}(t))\geq \beta I_{m_{1}L},\quad \mathcal{N}_{22}(t,P_{2}(t))\leq -\beta I_{m_{2}L},\quad a.e.\text{ }t\in[0,T].
\end{equation}
Since the solvability of CDREs \eqref{ZLQ-CCDREs-3} is not affected by the changed values of the coefficients on a zero Lebesgue measure set, we assume that \eqref{a.e.} holds for all $t\in[0,T]$ without loss of generality. Now, we denote $||D||_{\infty}$ as the essential supremum of $D(\cdot)\in L^{\infty}(0,T;\mathbb{R}^{nL\times mL})$ and set
$$
r=\frac{\beta}{4(||D||_{\infty}^{2}+1)}.
$$
Then by the continuities of $P_{1}(\cdot)$ and $P_{2}(\cdot)$,  we can select a small $\delta>0$ such that
$$ \big| P_{k}(\tau)-P_{k}(t) \big| \leq r,\quad t\in[\tau-\delta,\tau],\quad k=1,2.
$$
Consequently, for any $t\in[\tau-\delta,\tau]$  and $M\in \mathcal{B}_{r}(H)$, we have
\begin{equation}\label{p-1}
\begin{aligned}
\mathcal{N}_{11}(t,M)&=R_{11}(t)+D_{1}(t)^{\top}HD_{1}(t)+D_{1}(t)^{\top}(M-H)D_{1}(t)\\
&\geq R_{11}(t)+D_{1}(t)^{\top}P(\tau)D_{1}(t)-||D||_{\infty}^{2}\big| M-H\big| I_{m_{1}L}\\
&\geq \mathcal{N}_{11}(t,P_{1})-||D||_{\infty}^{2}\big| P_{1}(\tau)-P_{1}(t)\big| I_{m_{1}L}-||D||_{\infty}^{2}\big| M-H\big| I_{m_{1}L}\\
&\geq \beta I_{m_{1}L}-2r||D||_{\infty}^{2} I_{m_{1}L} \\
&\geq \frac{\beta}{2} I_{m_{1}L}.
\end{aligned}
\end{equation}
Similarly, one can further verify that
\begin{equation}\label{p-2}
\mathcal{N}_{22}(t,M)\leq -\frac{\beta}{2} I_{m_{2}L},\quad t\in[\tau-\delta,\tau],\quad M\in \mathcal{B}_{r}(H).
\end{equation}
Therefore, $\mathcal{N}_{11}(t,P)$ and $\mathcal{N}_{22}(t,P)$ are invertible for every $(t,P)\in[\tau-\delta,\tau]\times\mathcal{B}_{r}(H)$ with
$$
\big|\mathcal{N}_{11}(t,P)\big|\leq \frac{2}{\beta}\sqrt{m_{1}L},\quad
\big|\mathcal{N}_{22}(t,P)\big|\leq \frac{2}{\beta}\sqrt{m_{2}L}.
$$
Let
$$\Phi(t,P)\triangleq \mathcal{N}_{22}(t,P)-\mathcal{N}_{21}(t,P)\mathcal{N}_{11}(t,P)^{-1}\mathcal{N}_{12}(t,P) \quad \forall (t,P)\in[\tau-\delta,\tau]\times\mathcal{B}_{r}(H).$$
Then
$$\Phi(t,P)\leq \mathcal{N}_{22}(t,P)\quad
 \text{and}\quad
\big|\Phi(t,P)^{-1}\big|\leq \frac{2}{\beta}\sqrt{m_{2}L}.$$

On the other hand, due to the boundness of the coefficients in the state equation and performance functional, we can choose a constant $\gamma>0$ such that
$$\big|\mathcal{N}(t,P)\big|+\big|\mathcal{L}(t,P)\big|\leq \gamma, \quad \forall (t,P)\in[\tau-\delta,\tau]\times\mathcal{B}_{r}(H).$$
For simplicity, let $\gamma>0$ represent a generic constant that can be different from line to line and independent of $(t,P)\in[\tau-\delta,\tau]\times\mathcal{B}_{r}(H)$. Then it follows from Lemma \ref{lem-matrix} that
\begin{align*}
\big|\mathcal{N}(t,P)^{-1}\big|^{2}&=\big|\mathcal{N}_{11}(t,P)^{-1}+
\mathcal{N}_{11}(t,P)^{-1}\mathcal{N}_{12}(t,P)\Phi(t,P)^{-1}\mathcal{N}_{21}(t,P)\mathcal{N}_{11}(t,P)^{-1}\big|^{2}\\
&\quad +2\big|\mathcal{N}_{11}(t,P)^{-1}\mathcal{N}_{12}(t,P)\Phi(t,P)^{-1}\big|^{2}+\big|\Phi(t,P)^{-1}\big|^{2}\\
&\leq \gamma, \quad \forall (t,P)\in[\tau-\delta,\tau]\times\mathcal{B}_{r}(H).
\end{align*}
Noting that for any $(t,P),\text{ }(t,M)\in[\tau-\delta,\tau]\times\mathcal{B}_{r}(H)$,
\begin{align*}
&\big|\mathcal{M}(t,P)-\mathcal{M}(t,M)\big|+\big|\mathcal{L}(t,P)-\mathcal{L}(t,M)\big| \leq \gamma \big|P-M\big|,\\
&\begin{aligned}
\mathcal{N}(t,P)^{-1}-\mathcal{N}(t,M)^{-1}&=\mathcal{N}(t,P)^{-1}D(t)^{\top}(M-P)D(t)\mathcal{N}(t,M)^{-1}\\
&\leq \gamma \big|P-M\big|,
\end{aligned}
\end{align*}
and
\begin{align*}
&\quad \big|\mathcal{L}(t,P)\mathcal{N}(t,P)^{-1}\mathcal{L}(t,P)^{\top}
-\mathcal{L}(t,M)\mathcal{N}(t,M)^{-1}\mathcal{L}(t,M)^{\top}\big|\\
&\leq \big|\big(\mathcal{L}(t,P)-\mathcal{L}(t,M)\big)\mathcal{N}(t,P)^{-1}\mathcal{L}(t,P)^{\top}\big|
+\big|\mathcal{L}(t,M)\mathcal{N}(t,P)^{-1}\big(\mathcal{L}(t,P)-\mathcal{L}(t,M)\big)^{\top}\big|\\
&\quad+\big|\mathcal{L}(t,M)\big(\mathcal{N}(t,P)^{-1}-\mathcal{N}(t,M)^{-1}\big)\mathcal{L}(t,M)^{\top}\big|\\
&\leq \gamma\big|P-M\big|,
\end{align*}
which implies the Lipschitz continuity of $F(t,P)$ on the $[\tau-\delta,\tau]\times\mathcal{B}_{r}(H)$. By the contraction mapping theorem, we further derive the solvability of CDREs \eqref{ZLQ-CCDREs-3} (or equivalently, CDREs \eqref{ZLQ-CCDREs-2}) on a small interval $[\tau-\varepsilon,\tau]$. Moreover, the condition \eqref{ZLQ-CCDREs-2-condition} follows from the \eqref{p-1}-\eqref{p-2} directly. This completes the proof.
\end{proof}

Now, we provide the main result of this section.

\begin{theorem}\label{thm-ZLQ-CDREs-solvability}
  Let assumptions (A1)-(A2) and condition \eqref{uniformly-convex-concave-condition}. Then CDREs \eqref{ZLQ-CCDREs} admits a solution over $[0,T]$.
\end{theorem}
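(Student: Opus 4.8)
The plan is to establish global solvability by the continuation method, using the two results just proved as the local-existence step (Proposition \ref{prop-local-solvability}) and the a priori bound (Proposition \ref{prop-comparison}). First I would record the key observation that on the order interval $\{P : P_1(t,i)\le P\le P_2(t,i)\}$ the generator of \eqref{ZLQ-CCDREs} is well-defined and the sign constraints are automatic, with a uniform gap. Indeed, $P\ge P_1(t,i)$ gives $\mathcal{N}_{11}(t;P,i)\ge \mathcal{N}_{11}(t;P_1,i)$ and $P\le P_2(t,i)$ gives $\mathcal{N}_{22}(t;P,i)\le \mathcal{N}_{22}(t;P_2,i)$, so by Lemma \ref{thm-SLQ-uniform-convex} applied to $\mathbf{P}_1,\mathbf{P}_2$ there is $\beta>0$ with $\mathcal{N}_{11}(t;P,i)\ge \beta I$ and $\mathcal{N}_{22}(t;P,i)\le -\beta I$ for every $P$ in this tube. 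Lemma \ref{lem-matrix}, via $\Phi=\mathcal{N}_{22}-\mathcal{N}_{21}\mathcal{N}_{11}^{-1}\mathcal{N}_{12}\le \mathcal{N}_{22}\le -\beta I$, then shows $\mathcal{N}(t;P,i)$ is invertible with $|\mathcal{N}(t;P,i)^{-1}|$ bounded uniformly over the tube, so the generator $F$ is bounded and Lipschitz on the compact region enclosed by $\mathbf{P}_1$ and $\mathbf{P}_2$.

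Next I would run the continuation. Set $\Sigma=\{s\in[0,T] : \text{\eqref{ZLQ-CCDREs} admits a solution on } [s,T]\}$ and $\tau^{*}=\inf\Sigma$. Because $P_1(T,i)=G(i)=P_2(T,i)$, Proposition \ref{prop-local-solvability} with $\tau=T$ and $H=G$ yields a solution on some $[T-\varepsilon,T]$, so $\Sigma\ne\emptyset$ and $\tau^{*}<T$. For any $s\in\Sigma$ the terminal data satisfy $P_1(T,i)\le G(i)\le P_2(T,i)$, hence Proposition \ref{prop-comparison} (with $\tau=T$, $H=G$) forces $P_1(t,i)\le P(t,i)\le P_2(t,i)$ on all of $[s,T]$; by uniqueness of ODE solutions these restrictions coincide and patch into a single solution $\mathbf{P}$ on $(\tau^{*},T]$ that stays inside the tube.

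Then I would show $\tau^{*}\in\Sigma$ and $\tau^{*}=0$. The tube bound keeps $\mathbf{P}$ uniformly bounded, and since $\dot P(t,i)=-F(t,P(t))$ with $F$ bounded on the tube, $\mathbf{P}$ is uniformly Lipschitz in $t$ on $(\tau^{*},T]$; thus $H(i):=\lim_{t\downarrow\tau^{*}}P(t,i)$ exists and satisfies $P_1(\tau^{*},i)\le H(i)\le P_2(\tau^{*},i)$. Setting $P(\tau^{*},i)=H(i)$ and passing to the limit in the equation gives a solution on $[\tau^{*},T]$, while the uniform gap $\beta$ ensures that the strict constraints $\mathcal{N}_{11}\gg0$, $\mathcal{N}_{22}\ll0$ survive the limit, so $\tau^{*}\in\Sigma$. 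If $\tau^{*}>0$, Proposition \ref{prop-local-solvability} at $\tau=\tau^{*}$ with this $H$ produces a solution on $[\tau^{*}-\varepsilon,\tau^{*}]$ which glues with $\mathbf{P}$, contradicting the minimality of $\tau^{*}$. Hence $\tau^{*}=0$ and \eqref{ZLQ-CCDREs} is solvable on all of $[0,T]$.

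The main obstacle is the a priori control assembled in the first paragraph: without the two-sided comparison bound $\mathbf{P}_1\le\mathbf{P}\le\mathbf{P}_2$ one cannot rule out finite-time blow-up of the Riccati solution, and without the uniform sign pattern $\mathcal{N}_{11}\ge\beta I$, $\mathcal{N}_{22}\le-\beta I$ one cannot keep the indefinite matrix $\mathcal{N}$ boundedly invertible so that the generator remains Lipschitz up to the endpoint. Once these are in hand, the remaining continuation and gluing are routine ODE bookkeeping.
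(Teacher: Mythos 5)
Your proposal is correct and follows essentially the same route as the paper: Proposition \ref{prop-local-solvability} for local existence at $T$, Proposition \ref{prop-comparison} for the a priori bound $\mathbf{P}_1\le\mathbf{P}\le\mathbf{P}_2$, and a continuation/maximal-interval argument extending the solution to all of $[0,T]$. Your first paragraph even makes explicit a point the paper leaves implicit: the uniform gap $\mathcal{N}_{11}\ge\beta I$, $\mathcal{N}_{22}\le-\beta I$ on the order interval between $\mathbf{P}_1$ and $\mathbf{P}_2$ is what keeps $\mathcal{N}^{-1}$ bounded, hence the generator bounded and Lipschitz up to the left endpoint, which justifies the uniform-continuity/limit step in the paper's extension from $(\tau,T]$ to $[\tau,T]$.
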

\begin{proof}
 By Proposition \ref{prop-local-solvability}, we know that the CDREs \eqref{ZLQ-CCDREs} are locally solvable at $T$.  In the following, we shall prove that the solvable interval can be extended to $[0,T]$.

 Without loss of generality, we first assume that the CDREs \eqref{ZLQ-CCDREs} are solvable on the interval $(\tau,T]$. Then by Proposition \ref{prop-comparison}, we have
 $$
 P_{1}(t,i)\leq P(t,i)\leq P_{2}(t,i),\quad t\in(\tau,T],\quad i\in\mathcal{S},
 $$
 which implies that $\mathbf{P}(\cdot)$ is bounded on the interval $(\tau,T]$ and
 \begin{align*}
  P(t,i)=G(i)+\int_{t}^{T}\big[\mathcal{M}(s;P,i)-\mathcal{L}(s;P,i)\mathcal{N}(s;P,i)^{-1}\mathcal{L}(s;P,i)^{\top}\big]ds,\quad t\in(\tau,T],\quad i\in\mathcal{S},
 \end{align*}
 is uniformly continuous. Hence, there exist a finite $\mathbf{P}(\tau)=\big[P(\tau,1),\cdots,P(\tau,L)\big]$ such that
 $$P(\tau,i)=\lim_{t\downarrow \tau}P(t,i),\quad i\in\mathcal{S},$$
 and we can extend the solvable  interval $(\tau,T]$ to $[\tau,T]$.

 Suppose that $[\tau,T]$  is the maximal interval on which a solution $\mathbf{P}(\cdot)$ of \eqref{ZLQ-CCDREs} exists. Note that
 $$
 P_{1}(\tau,i)\leq P(\tau,i)\leq P_{2}(\tau,i),\quad  i\in\mathcal{S}.
 $$
 If $\tau>0$, then we can further extend the solvable  interval $[\tau,T]$ to $[\tau-\varepsilon,T]$ for some small $\varepsilon>0$  by applying Proposition \ref{prop-local-solvability} again. This contradicts the maximality of $[\tau,T]$. So we must have $\tau=0$.
\end{proof}

Clearly, if the CDREs \eqref{ZLQ-CCDREs} admits a solution $\mathbf{P}(\cdot)\in\mathcal{D}(C(0,T;\mathbb{S}^{n}))$, then
%it must satisfies the condition \eqref{ZLQ-closed-loop-constraint-2}. Additionally,
 the associated BSDE \eqref{eta} also admits a solution $(\eta(\cdot),\zeta(\cdot),\mathbf{z}(\cdot))$ satisfies condition \eqref{ZLQ-eta-closed-constraint}. Combining with Theorem \ref{thm-ZLQ-closed-loop},  Remark \ref{rmk-closed-zlq}, and Corollary \ref{coro-closed-open}, we have the following result directly.
\begin{corollary}\label{coro-ZLQ-solvability}
Let assumptions (A1)-(A2) and condition \eqref{uniformly-convex-concave-condition} hold. Then, the Problem (M-ZLQ) is both open-loop solvable and closed-loop solvable. In this case, the closed-loop optimal strategy $(\widehat{\mathbf{\Theta}}(\cdot),\widehat{\nu}(\cdot))\in \mathcal{D}\left(L^{2}(0,T;\mathbb{R}^{m\times n})\right)\times L_{\mathbb{F}}^{2}(0,T;\mathbb{R}^{m})$ of Problem (M-ZLQ) admits the following representation:
  \begin{equation}\label{ZLQ-closed-optimal-2}
      \left\{
      \begin{aligned}
      &\widehat{\Theta}(\cdot,i)=-\mathcal{N}(\cdot,P,i)^{-1}\mathcal{L}(\cdot,P,i)^{\top},\quad i\in\mathcal{S},\\
      &\widehat{\nu}(\cdot)=-\mathcal{N}(\cdot,P,\alpha(\cdot))^{-1}\widetilde{\rho}(\cdot),
      \end{aligned}
      \right.
  \end{equation}
  where $\widetilde{\rho}(\cdot)$ is defined in \eqref{eta-constraint}, $\mathbf{P}(\cdot)$ is the solution to CDREs \eqref{ZLQ-CCDREs} and $(\eta(\cdot),\zeta(\cdot),\mathbf{z}(\cdot))$ is the solution of the following BSDE:
   \begin{equation}\label{ZLQ-eta}
  \left\{
      \begin{aligned}
        d\eta(t)&=-\big\{\big[A(\alpha)^{\top}-\mathcal{L}(P,\alpha)\mathcal{N}(P,\alpha)^{-1}B(\alpha)^{\top}\big]\eta+\big[C(\alpha)^{\top}-\mathcal{L}(P,\alpha)\mathcal{N}(P,\alpha)^{-1}D(\alpha)^{\top}\big]\zeta\\
        &\quad+\big[C(\alpha)^{\top}-\mathcal{L}(P,\alpha)\mathcal{N}(P,\alpha)^{-1}D(\alpha)^{\top}\big]P(\alpha)\sigma-\mathcal{L}(P,\alpha)\mathcal{N}(P,\alpha)^{-1}\rho+P(\alpha)b+q\big\}dt\\
        &\quad+\zeta dW(t)+\mathbf{z}\cdot d\mathbf{\widetilde{N}}(t),\quad t\in [0,T],\\
        \eta(T)&=m.
      \end{aligned}
      \right.
  \end{equation}
  In addition, the open-loop saddle point admits the following closed-loop representation
  \begin{equation}\label{ZLQ-open-loop-optimal-saddle-point-2}
    u^{*}(\cdot;x,i)=[u_{1}^{*}(\cdot;x,i)^{\top},u_{2}^{*}(\cdot;x,i)^{\top}]^{\top}
    =\widehat{\Theta}(\cdot,\alpha(\cdot))X(\cdot;x,i,\mathbf{\widehat{\Theta}},\widehat{\nu})+\widehat{\nu}(\cdot),
    \quad (x,i)\in\mathbb{R}^{n}\times\mathcal{S}.
  \end{equation}
\end{corollary}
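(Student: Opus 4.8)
The plan is to assemble the machinery already in place rather than argue from scratch: Theorem \ref{thm-ZLQ-CDREs-solvability} supplies the Riccati solution, and the solvability criteria of Remark \ref{rmk-closed-zlq} together with Corollary \ref{coro-closed-open} convert it into the two asserted solvability statements. First I would invoke Theorem \ref{thm-ZLQ-CDREs-solvability} to obtain, under condition \eqref{uniformly-convex-concave-condition}, a solution $\mathbf{P}(\cdot)\in\mathcal{D}(C(0,T;\mathbb{S}^{n}))$ of the constrained CDREs \eqref{ZLQ-CCDREs} on all of $[0,T]$, carrying the strict block-definiteness $\mathcal{N}_{11}(\cdot;P,i)\gg0$ and $\mathcal{N}_{22}(\cdot;P,i)\ll0$.

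The one genuinely new point—and the only place any computation is needed—is to upgrade this block-definiteness to invertibility of the full matrix $\mathcal{N}(\cdot;P,i)$. Using the block form \eqref{ZLQ-notation-LN} and Lemma \ref{lem-matrix}, since $\mathcal{N}_{11}\gg0$ the Schur complement $\Phi\triangleq\mathcal{N}_{22}-\mathcal{N}_{21}\mathcal{N}_{11}^{-1}\mathcal{N}_{12}$ satisfies $\Phi\le\mathcal{N}_{22}\ll0$, because $\mathcal{N}_{21}\mathcal{N}_{11}^{-1}\mathcal{N}_{12}=\mathcal{N}_{12}^{\top}\mathcal{N}_{11}^{-1}\mathcal{N}_{12}\ge0$; hence $\Phi$ is negative definite and invertible, and Lemma \ref{lem-matrix} then gives invertibility of $\mathcal{N}$ with a uniformly bounded inverse. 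Consequently $\mathcal{N}^{\dag}=\mathcal{N}^{-1}$, the range inclusion $\mathcal{R}(\mathcal{L}^{\top})\subseteq\mathcal{R}(\mathcal{N})=\mathbb{R}^{m}$ is automatic, $I-\mathcal{N}^{\dag}\mathcal{N}=0$, and both $\mathcal{N}^{-1}\mathcal{L}^{\top}$ and $\mathcal{N}^{-1}\widetilde{\rho}$ land in the required $L^{2}$-spaces by boundedness of the coefficients, so $\mathbf{P}(\cdot)$ meets every requirement in \eqref{ZLQ-closed-loop-constraint-2}.

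Next I would solve the linear BSDE \eqref{ZLQ-eta}, which is exactly \eqref{eta} with each pseudo-inverse replaced by the genuine inverse; its coefficients are bounded and the inhomogeneous terms are square-integrable, so it admits a unique adapted solution $(\eta,\zeta,\mathbf{z})$, and the constraint \eqref{eta-constraint} holds trivially because $\widetilde{\rho}\in\mathbb{R}^{m}=\mathcal{R}(\mathcal{N})$ and $\mathcal{N}^{-1}\widetilde{\rho}\in L^{2}_{\mathbb{F}}$. By Remark \ref{rmk-closed-zlq}, Problem (M-ZLQ) is therefore closed-loop solvable, and since $I-\mathcal{N}^{\dag}\mathcal{N}=0$ the general strategy formula \eqref{SLQ-closed-optimal} collapses to the representation \eqref{ZLQ-closed-optimal-2}.

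Finally, for open-loop solvability I would note that condition \eqref{uniformly-convex-concave-condition} is strictly stronger than the convexity-concavity condition \eqref{convex-concave-condition} (let $\lambda\downarrow0$). With \eqref{convex-concave-condition} in force and closed-loop solvability just established, Corollary \ref{coro-closed-open} yields open-loop solvability and identifies the outcome of the closed-loop equilibrium strategy with the closed-loop representation \eqref{ZLQ-open-loop-optimal-saddle-point-2} of the open-loop saddle point, completing the argument. I expect the whole proof to be short; the only step requiring care is the Schur-complement invertibility of $\mathcal{N}$, which is precisely what lets every pseudo-inverse in the earlier characterizations be replaced by an ordinary inverse.
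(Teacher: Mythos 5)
Your proposal is correct and follows essentially the same route as the paper: Theorem \ref{thm-ZLQ-CDREs-solvability} supplies the constrained CDREs solution, the BSDE \eqref{ZLQ-eta} is then solvable with the constraints holding automatically, and Theorem \ref{thm-ZLQ-closed-loop}, Remark \ref{rmk-closed-zlq} and Corollary \ref{coro-closed-open} deliver both solvabilities and the representations. The only difference is that you make explicit the Schur-complement argument (via Lemma \ref{lem-matrix}) showing $\mathcal{N}_{11}\gg 0$ and $\mathcal{N}_{22}\ll 0$ force invertibility of the full matrix $\mathcal{N}(\cdot;P,i)$, a step the paper treats as immediate.
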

\section{Numerical examples}\label{section-6}
In this section, we present a concrete example to illustrate the results in the previous sections. Without loss of generality, we suppose the state space of the Markov chain $\alpha$ is $\mathcal{S}:=\left\{1,2,3\right\}$ and the corresponding generator is given by
$$\Pi=\left[\begin{array}{ccc}
     \pi_{11}   &   \pi_{12}   &   \pi_{13}\\
    \pi_{21}   &   \pi_{22}   &   \pi_{23}\\
    \pi_{31}   &   \pi_{32}   &   \pi_{33}
\end{array}\right]
=\left[\begin{array}{ccc}
     -0.5   &   0.3   &   0.2\\
    0.2   &   -0.4   &   0.2\\
    0.3   &   0.2   &   -0.5
\end{array}\right].$$
For simplicity, we assume that both the state process and the control process are one-dimensional, and the coefficients in both the state equation and the performance functional depend only on the Markov chain $\alpha(\cdot)$.

Consider the following state equation and performance functional:
\begin{equation}\label{state-example}
\left\{
\begin{aligned}
dX(t)&=\left[A(\alpha)X+B_{1}(\alpha)u_{1}+B_{2}(\alpha)u_{2}\right]dt
+\left[C(\alpha)X(t)+D_{1}(\alpha)u_{1}+D_{2}(\alpha)u_{2}\right]dW(t),\\
   X(0)&=x,\quad \alpha(0)=i, \quad t\in[0,1],
\end{aligned}
\right.
\end{equation}
\begin{equation}\label{performance-functional-example}
\begin{aligned}
   J(x,i;u_{1},u_{2})
    \triangleq &\mathbb{E}\left\{\int_{0}^{1}
    \left<
    \left(
    \begin{matrix}
    Q(\alpha) & S_{1}(\alpha)^{\top} & S_{2}(\alpha)^{\top} \\
    S_{1}(\alpha) & R_{11}(\alpha) & R_{12}(\alpha) \\
    S_{2}(\alpha) & R_{21}(\alpha) & R_{22}(\alpha)
    \end{matrix}
    \right)
    \left(
    \begin{matrix}
    X \\
    u_{1} \\
    u_{2}
    \end{matrix}
   \right),
    \left(
    \begin{matrix}
    X\\
    u_{1} \\
    u_{2}
    \end{matrix}
    \right)
    \right>dt
   +\big<G(\alpha(1))X(1), X(1)\big>\right\},
  \end{aligned}
\end{equation}
where the coefficients are given by
\begin{table}[h!]
  \centering
\begin{tabular}{|c|c|c|c|c|c|c|}
  \hline
  % after \\: \hline or \cline{col1-col2} \cline{col3-col4} ...
   $\alpha$ & $A(i)$ & $B_{1}(i)$ & $B_{2}(i)$ & $C(i)$ & $D_{1}(i)$ & $D_{2}(i)$ \\
    \hline
  $i=1$    & -1      &    1     &    -1     &     1     &       1     &      -1 \\
  $i=2$    & -1.5   &   -1     &     0     &     1     &      -1     &       0 \\
  $i=3$    & -1      &    0     &     1     &    -1     &       0     &       1 \\
  \hline
\end{tabular}
\end{table}

\noindent and
\begin{table}[h!]
  \centering
\begin{tabular}{|c|c|c|c|c|c|c|c|}
  \hline
  % after \\: \hline or \cline{col1-col2} \cline{col3-col4} ...
   $\alpha$ & $Q(i)$ & $S_{1}(i)$ & $S_{2}(i)$ & $R_{11}(i)$ & $R_{12}(i)$ & $R_{22}(i)$ & $G(i)$\\
    \hline
  $i=1$    &    -1      &    1     &    -2     &     10     &       8      &       -12      &     -1 \\
  $i=2$    &     5      &    5     &     0     &      12     &      -4      &       -15      &      2 \\
  $i=3$    &     0      &    0     &     1     &      14     &       6      &       -13      &      0\\
  \hline
\end{tabular}
\end{table}

Clearly, the Problem (M-ZLQ) with the above coefficients satisfies the assumptions (A1)-(A2).
Next, we verify that the condition \eqref{uniformly-convex-concave-condition} also holds in this example. To this end, let
$X_{k}(\cdot)$ be the solution of the following SDE:
\begin{equation}\label{state-example-k}
\left\{
\begin{aligned}
dX(t)&=\left[A(\alpha)X+B_{k}(\alpha)u_{k}\right]dt
+\left[C(\alpha)X(t)+D_{k}(\alpha)u_{k}\right]dW(t),\quad t\in[0,1],\\
   X(0)&=0,\quad \alpha(0)=i.
\end{aligned}
\right.
\end{equation}
Then applying the Ito's rule to $\big<X_{k}(\cdot),X_{k}(\cdot)\big>$, we have
\begin{align*}
\mathbb{E}\big|X_{k}(t)\big|^{2}&=\mathbb{E}\int_{0}^{t}\big[\big<(A(\alpha)+A(\alpha)^{\top}
+C(\alpha)^{\top}C(\alpha))X_{k},X_{k}\big>
+2\big<(B_{k}(\alpha)+C(\alpha)^{\top}D_{k}(\alpha))u_{k},X_{k}\big>\\
&\quad +\big<D_{k}(\alpha)^{\top}D_{k}(\alpha)u_{k},u_{k}\big>\big]ds\\
&\leq\mathbb{E}\int_{0}^{t}\big[\big<\mathcal{A}(\alpha)X_{k},X_{k}\big>
 +\big<\mathcal{B}_{k}(\alpha)u_{k},u_{k}\big>\big]ds.
\end{align*}
where
\begin{align*}
&\mathcal{A}(i)=A(i)+A(i)^{\top}+C(i)^{\top}C(i)+I,\\
&\mathcal{B}_{k}(i)=[B_{k}(i)+C(i)^{\top}D_{k}(i)]^{\top}[B_{k}(i)+C(i)^{\top}D_{k}(i)]
+D_{k}(i)^{\top}D_{k}(i),\quad i\in\mathcal{S},\quad k=1,2,
\end{align*}
%Plugging the coefficients into the above equations, we have
are given by:
\begin{table}[h!]
  \centering
\begin{tabular}{|c|c|c|c|}
  \hline
  % after \\: \hline or \cline{col1-col2} \cline{col3-col4} ...
   $\alpha$ & $\mathcal{A}(i)$ & $\mathcal{B}_{1}(i)$ & $\mathcal{B}_{2}(i)$ \\
    \hline
  $i=1$    &     0      &     5     &    5 \\
  $i=2$    &    -1      &    5     &     0  \\
  $i=3$    &     0      &    0     &     1 \\
  \hline
\end{tabular}
\end{table}

\noindent Consequently,
\begin{align*}
&\mathbb{E}\big|X_{k}(t)\big|^{2}\leq \int_{0}^{t}\big[ \mathbb{E}\big|X_{1}(s)\big|^{2}+5\mathbb{E}\big|u_{1}(s)\big|^{2}
\big]ds.
\end{align*}
By Gronwall's inequality, we can further obtain
\begin{align*}
&\mathbb{E}\big|X_{k}(t)\big|^{2}\leq \int_{0}^{t} 5e^{t-s}\mathbb{E}\big|u_{1}(s)\big|^{2}ds,
\end{align*}
which implies
\begin{align*}
\mathbb{E}\int_{0}^{1}\big|X_{k}(t)\big|^{2}dt&\leq \mathbb{E}\int_{0}^{1}\Big[\int_{0}^{t} 5e^{t-s}\big|u_{1}(s)\big|^{2}ds\Big] dt= \mathbb{E}\int_{0}^{1}\Big[\int_{s}^{1} 5e^{t-s}\big|u_{1}(s)\big|^{2}dt\Big] ds\\
&\leq 5(e-1) \mathbb{E}\int_{0}^{1} \big|u_{1}(t)\big|^{2} dt.
\end{align*}

On the other hand, applying It\^o's rule to $\big<G(\alpha(\cdot)X_{k}(\cdot)),X_{k}(\cdot)\big>$, one has
\begin{align*}
 J(0,i;u_{k},0)
    \triangleq &\mathbb{E}\int_{0}^{1}
    \left<
    \left(
    \begin{matrix}
    \mathcal{M}(\alpha) & \mathcal{L}_{k}(\alpha)\\
    \mathcal{L}_{k}(\alpha)^{\top} & \mathcal{N}_{k}(\alpha)
    \end{matrix}
    \right)
    \left(
    \begin{matrix}
    X_{k} \\
    u_{k}
    \end{matrix}
   \right),
    \left(
    \begin{matrix}
    X_{k}\\
    u_{k}
    \end{matrix}
    \right)
    \right>dt,
\end{align*}
where
$$
\begin{array}{l}
  \mathcal{M}(i)=G(i)A(i)+ A(i)^{\top}G(i)+C(i)^{\top}G(i)C(i)+Q(i)+\sum_{j=1}^{L}\pi_{ij}G(j)\\[2mm]
  \mathcal{L}_{k}(i)=G(i)B_{k}(i) +C(i)^{\top}G(i)D_{k}(i)+S_{k}(i)^{\top}\\[2mm]
 \mathcal{N}_{k}(i)=D_{k}(i)^{\top}G(i)D_{k}(i)+R_{kk}(i),\quad i\in\mathcal{S},\quad k=1,2
 \end{array}
$$
%Substituting the coefficients into the above equations, one has
satisfy
\begin{table}[h!]
  \centering
\begin{tabular}{|l|c|c|c|c|c|}
  \hline
   $\alpha$ & $\mathcal{M}(i)$ & $\mathcal{L}_{1}(i)$ & $\mathcal{L}_{2}(i)$ & $\mathcal{N}_{1}(i)$ & $\mathcal{N}_{2}(i)$ \\
    \hline
  $i=1$ & 1.1     & -1     &      0      &       11     & -11 \\
  $i=2$ & 0        & 1      &      0      &       13     & -15 \\
  $i=3$ & 0.1     & 0      &      1      &       14     & -12\\
  \hline
\end{tabular}
\end{table}

\noindent Hence,
\begin{align*}
 J(0,i;u_{1},0)
    &\geq\mathbb{E}\int_{0}^{1}\big[
    \big<\big(\mathcal{M}(\alpha)-\mathcal{L}_{1}(\alpha)\mathcal{L}_{1}(\alpha)^{\top}\big)X_{1}(t),X_{1}(t)\big>+\big<\big(\mathcal{N}_{1}(\alpha)-I\big)u_{1}(t),u_{1}(t)\big>\big]dt\\
    &\geq \mathbb{E}\int_{0}^{1}\big[
    -\big|X_{1}(t)\big|^{2}+10\big|u_{1}(t)\big|^{2}\big]dt\\
    &\geq [10-5(e-1)]\mathbb{E}\int_{0}^{1}\big|u_{1}(t)\big|^{2}dt,\\
 J(0,i;0,u_{2})
    &\leq\mathbb{E}\int_{0}^{1}\big[
    \big<\big(\mathcal{M}(\alpha)+\mathcal{L}_{2}(\alpha)\mathcal{L}_{2}(\alpha)^{\top}\big)X_{2}(t),X_{2}(t)\big>
    +\big<\big(\mathcal{N}_{2}(\alpha)+I\big)u_{2}(t),u_{2}(t)\big>\big]dt\\
    &\leq \mathbb{E}\int_{0}^{1}\big[
    -1.1\big|X_{2}(t)\big|^{2}-10\big|u_{1}(t)\big|^{2}\big]dt\\
    &\leq -[10-5.5(e-1)]\mathbb{E}\int_{0}^{1}\big|u_{1}(t)\big|^{2}dt.
\end{align*}
Noting that
$$10-5(e-1)>10-5.5(e-1)>0,$$
which implies that the condition \eqref{uniformly-convex-concave-condition} also holds in this example.

%Substituting the coefficients into CDREs \eqref{ZLQ-CCDREs}, we have
%\begin{equation}\label{CDREs-example}
%\left\{
% \begin{aligned}
%&0= \dot{P}(t,1)-\left[\begin{matrix}
% 2P(t,1)+1 & -2P(t,1)-2
% \end{matrix}\right]
% \left[\begin{matrix}
% P(t,1)+10 & -P(t,1)+8\\
% -P(t,1)+8 & P(t,1)-12
% \end{matrix}\right]^{-1}
% \left[\begin{matrix}
% 2P(t,1)+1 \\
%  -2P(t,1)-2
% \end{matrix}\right]\\
% &\qquad -P(t,1)-1-0.5P(t,1)+0.3P(t,2)+0.2P(t,3),\\
% &0= \dot{P}(t,2)-\left[\begin{matrix}
% -2P(t,2)+5 & 0
% \end{matrix}\right]
% \left[\begin{matrix}
% P(t,2)+12 & -4\\
% -4 & -15
% \end{matrix}\right]^{-1}
% \left[\begin{matrix}
% -2P(t,2)+5 \\
%  0
% \end{matrix}\right]\\
% &\qquad -2P(t,2)+5+0.2P(t,1)-0.4P(t,2)+0.2P(t,3),\\
%  &0= \dot{P}(t,3)-\left[\begin{matrix}
%0 & 1
% \end{matrix}\right]
% \left[\begin{matrix}
% 14 & 6\\
% 6 & P(t,3)-13
% \end{matrix}\right]^{-1}
% \left[\begin{matrix}
%0 \\
%  1
% \end{matrix}\right]\\
% &\qquad -P(t,3)+0.3P(t,1)+0.2P(t,2)-0.5P(t,3),\\
% &P(1,1)=-1,\quad P(1,2)=2,\quad P(1,3)=0.
% \end{aligned}
% \right.
%\end{equation}
It follows from Theorem \ref{thm-ZLQ-CDREs-solvability} that  the corresponding CDREs \eqref{ZLQ-CCDREs} for this example
admits a solution such that $\mathcal{N}_{11}(\cdot;P,i)\gg 0$ and $\mathcal{N}_{22}(\cdot;P,i)\ll 0$ for all $i\in\{1,2,3\}$. Using the well-known finite difference method, we can present the following numerical solution figure of $\left[P(\cdot,1), P(\cdot,2), P(\cdot,3)\right]$ for clearer visualization.
\begin{figure}[H]
  \centering
  \begin{tabular}{c}
    \includegraphics[width=200pt,height=150pt]{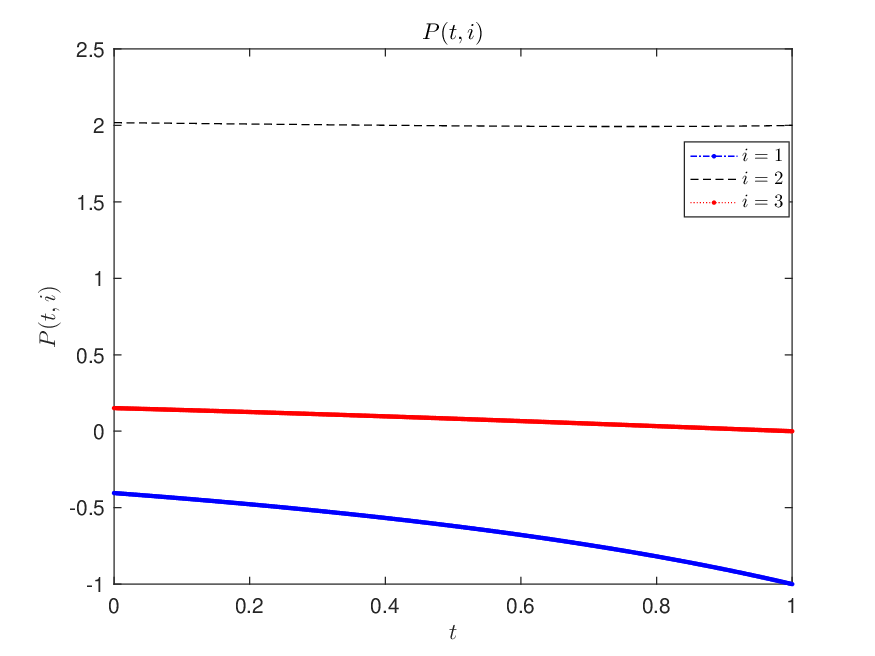}
  \end{tabular}
\end{figure}
Obviously,
%With the solution to \eqref{CDREs-example},
the unique solution to BSDE \eqref{ZLQ-eta} in this example is $(0,0,\mathbf{0})$. Therefore, by Corollary \ref{coro-ZLQ-solvability}, the open-loop saddle point is given by
$$u^{*}(\cdot;x,i)=[u_{1}^{*}(\cdot;x,i)^{\top},u_{2}^{*}(\cdot)^{\top}]^{\top}
=\widehat{\Theta}(\cdot;\alpha(\cdot))X^{*}(\cdot;x,i),\quad \forall(x,i)\in\mathbb{R}\times\{1,2,3\},$$
where (noting the notations defined in \eqref{notation-LMN})
$$
\widehat{\Theta}(\cdot,i)=[\widehat{\Theta}_{1}(\cdot,i)^{\top},\widehat{\Theta}_{2}(\cdot,i)^{\top}]^{\top}=-\mathcal{N}(\cdot,P,i)^{-1}\mathcal{L}(\cdot,P,i)^{\top},\quad i\in\{1,2,3\},
$$
and $X^{*}(\cdot;x,i)$ be the solution to the following SDE:
\begin{equation}\label{optimal-state-example}
\left\{
\begin{aligned}
dX^{*}(t)&=\left[A(\alpha(t))+B_{1}(\alpha(t))\widehat{\Theta}_{1}(t,\alpha(t))
+B_{2}(\alpha(t))\widehat{\Theta}_{2}(t,\alpha(t))\right]X^{*}(t)dt\\
&\quad+\left[C(\alpha(t))+D_{1}(\alpha(t))\widehat{\Theta}_{1}(t,\alpha(t))
+D_{2}(\alpha(t))\widehat{\Theta}_{2}(t,\alpha(t))\right]X^{*}(t)dW(t),\\
   X^{*}(0)&=x,\quad \alpha(0)=i, \quad t\in[0,1].
\end{aligned}
\right.
\end{equation}

\section*{Declarations}
\textbf{Conflict of interest:} The authors have not disclosed any competing interests.
%\newpage
\bibliography{references}
\bibliographystyle{abbrvnat}

\end{document}